\newtheorem{theorem}{Theorem}[section]
\newtheorem{lemma}[theorem]{Lemma}
\newtheorem{proposition}[theorem]{Proposition}
\newtheorem{corollary}[theorem]{Corollary}
\theoremstyle{definition}
\newtheorem{definition}[theorem]{Definition}
\theoremstyle{remark}
\newtheorem{remark}[theorem]{Remark}
\numberwithin{equation}{section}
\newcommand{\QSym}{\ensuremath{\mathrm{QSym}}}
\newcommand{\Poly}{\ensuremath{\mathrm{Poly}}}
\newcommand{\SSYT}{\ensuremath{\mathrm{SSYT}}}
\newcommand{\SYT}{\ensuremath{\mathrm{SYT}}}
\newcommand{\QYT}{\ensuremath{\mathrm{QYT}}}
\newcommand{\PD}{\ensuremath{\mathrm{PD}}}
\newcommand{\QPD}{\ensuremath{\mathrm{QPD}}}
\newcommand{\RD}{\ensuremath{\mathrm{R}}}
\newcommand{\wt}{\ensuremath{\mathrm{wt}}}
\newcommand{\inv}{\ensuremath{\mathrm{inv}}}
\newcommand{\width}{\ensuremath{\mathrm{width}}}
\newcommand{\sch}{\ensuremath{\mathfrak{S}}}
\newcommand{\sta}{\ensuremath{S}}
\newcommand{\Mono}{\ensuremath{\mathfrak{M}}}
\newcommand{\Fund}{\ensuremath{\mathfrak{F}}}
\newcommand{\stand}{\ensuremath{\mathrm{std}}}
\newcommand{\destand}{\ensuremath{\mathrm{dst}}}
\newcommand{\sit}{\ensuremath{\mathrm{sit}}}
\newcommand{\flatten}{\ensuremath{\mathrm{flat}}}
\newcommand{\bump}{\ensuremath{\mathrm{bump}}}
\newcommand{\QSS}{\ensuremath{\mathrm{QSS}}}
\newcommand{\ShS}{\ensuremath{\mathrm{SS}}}
\newcommand{\sgeq}{\ensuremath{\unrhd}}
\newcommand{\Des}{\ensuremath{\mathrm{Des}}}
\newcommand{\qshuffle}{\ensuremath{\squplus}}
\newcommand{\qslide}{\ensuremath{\squplus}}
\newcommand{\slide}{\ensuremath{\shuffle}}
\newlength\cellsize \setlength\cellsize{12\unitlength}
\newcommand\cellify[1]{\def\thearg{#1}\def\nothing{}%
\ifx\thearg\nothing\vrule width0pt height\cellsize depth0pt%
  \else\hbox to 0pt{\usebox2\hss}\fi%
  \vbox to 12\unitlength{\vss\hbox to 12\unitlength{\hss$#1$\hss}\vss}}
\newcommand\tableau[1]{\vtop{\let\\=\cr
\setlength\baselineskip{-12000pt}
\setlength\lineskiplimit{12000pt}
\setlength\lineskip{0pt}
\halign{&\cellify{##}\cr#1\crcr}}}
\newcommand\elbow{%\color{red}%
\begin{picture}(10,10)
\thicklines
\put(10,10){\oval(10,10)[bl]}
\put(0,0){\oval(10,10)[tr]}
\end{picture}}
\newcommand\upelb{%\color{red}%
\begin{picture}(10,10)
\thicklines
\put(0,0){\oval(10,10)[tr]}
\end{picture}}
\newcommand\cross{%\color{blue}%
\begin{picture}(10,10)
\thicklines
\put(5,0){\line(0,1){10}}
\put(0,5){\line(1,0){10}}
\end{picture}}
\newcommand\gridify[1]{\vbox to 10\unitlength{\vss\hbox to 10\unitlength{\hss$_{#1}$\hss}\vss}}
\newcommand\pipes[1]{\vtop{\let\\=\cr
\setlength\baselineskip{-10000pt}
\setlength\lineskiplimit{10000pt}
\setlength\lineskip{0pt}
\halign{&\gridify{##}\cr#1\crcr}}}
\begin{document}

%%%%%%%%%%%%%%%%%%%%%%%%%%%%%%%%%%%%%%%%%%%%%%%%%%%%%%%%%%%%
%  TITLE PAGE information
%%%%%%%%%%%%%%%%%%%%%%%%%%%%%%%%%%%%%%%%%%%%%%%%%%%%%%%%%%%%

%     [Short Title]{Full Title}
\title[Slide polynomials]{Schubert polynomials, slide polynomials, \\ Stanley symmetric functions and \\ quasi-Yamanouchi pipe dreams}  

%    Information for first author
\author[S. Assaf]{Sami Assaf}
\address{Department of Mathematics, University of Southern California, Los Angeles, CA 90089}
\email{shassaf@usc.edu}
%\thanks{}

%    Information for second author
\author[D. Searles]{Dominic Searles}
\address{Department of Mathematics, University of Southern California, Los Angeles, CA 90089}
\email{dsearles@usc.edu}
%\thanks{}

%    General info
\subjclass[2010]{Primary 14M15; Secondary 14N15, 05E05, }

\date{March 11, 2016}

%\dedicatory{}

\keywords{Schubert polynomials, Stanley symmetric functions, pipe dreams, reduced decompositions, quasisymmetric functions}

\begin{abstract}
  We introduce two new bases for polynomials that lift monomial and fundamental quasisymmetric functions to the full polynomial ring. By defining a new condition on pipe dreams, called quasi-Yamanouchi, we give a positive combinatorial rule for expanding Schubert polynomials into these new bases that parallels the expansion of Schur functions into fundamental quasisymmetric functions. As a result, we obtain a refinement of the stable limits of Schubert polynomials to Stanley symmetric functions. We also give combinatorial rules for the positive structure constants of these bases that generalize the quasi-shuffle product and shuffle product, respectively. We use this to give a Littlewood--Richardson rule for expanding a product of Schubert polynomials into fundamental slide polynomials and to give formulas for products of Stanley symmetric functions in terms of Schubert structure constants.
\end{abstract}

\maketitle
%\tableofcontents

%%%%%%%%%%%%%%%%%%%%%%%%%%%%%%%%%%%%%%%%%%%%%%%%%%%%%%%%%%%%%%%%
%
\section{Introduction}
%
%%%%%%%%%%%%%%%%%%%%%%%%%%%%%%%%%%%%%%%%%%%%%%%%%%%%%%%%%%%%%%%%
\label{sec:introduction}

The Schubert polynomials give explicit polynomial representatives for the Schubert classes in the cohomology ring of the complete flag variety, with the goal of facilitating computations of intersection numbers. Lascoux and Sch\"utzenberger \cite{LS82} first defined Schubert polynomials indexed by permutations in terms of divided difference operators, and later Billey, Jockusch and Stanley \cite{BJS93} and Fomin and Stanley \cite{FS94} gave direct monomial expansions. Bergeron and Billey \cite{BB93} reformulated this again to give a beautiful combinatorial definition of Schubert polynomials as generating functions for $RC$-graphs, often called \emph{pipe dreams}. However, even armed with these elegant formulations, the longstanding problem of giving a positive combinatorial formula for the structure constants of Schubert polynomials remains open in all but a few special cases.

In this paper, we introduce a new tool to aid in the study of Schubert polynomials. We define two new families of polynomials we call the \emph{monomial slide polynomials} and \emph{fundamental slide polynomials}. Both monomial and fundamental slide polynomials are combinatorially indexed by weak compositions, and both families form a basis of the polynomial ring. Moreover, the Schubert polynomials expand positively into the fundamental slide basis, which in turn expands positively into the monomial slide basis. While there are other bases that refine Schubert polynomials, most notably key polynomials \cite{Dem74,LS90}, ours has two main properties that make it a compelling addition to the theory of Schubert calculus. First, our polynomials exhibit a similar stability to that of Schubert polynomials, and so they facilitate a deeper understanding of the stable limit of Schubert polynomials, which, as originally shown by Macdonald \cite{Mac91}, are Stanley symmetric functions \cite{Sta84}. Second, and in sharp contrast to key polynomials, our bases themselves have \emph{positive} structure constants, and so our Littlewood-Richardson rule for the fundamental slide expansion of a product of Schubert polynomials takes us one step closer to giving a combinatorial formula for Schubert structure constants.

To motivate our new bases, let us first recall a special case in which the Schubert problem is solved explicitly, that of the Grassmannian partial flag variety. In this case, Schubert polynomials are nothing more than Schur polynomials, which form a well-studied basis for symmetric polynomials, that is, polynomials invariant under any permutation of the variables. Schur polynomials have a beautiful combinatorial definition as the generating functions of semistandard Young tableaux, and the original Littlewood--Richardson rule gives an elegant combinatorial formula for the Schur structure constants as the number of so-called \emph{Yamanouchi} tableaux, which are semistandard tableaux satisfying certain additional conditions. This rule has many reformulations and many beautiful proofs, yet so far none of these has been lifted to the general polynomial setting.

As an intermediate step to this lift, we consider instead the ring of quasisymmetric polynomials, that is, polynomials invariant under certain permutations of the variables. Gessel \cite{Ges84} defined the fundamental basis for quasisymmetric polynomials, and showed that the Schur polynomials may be written as the generating function of standard Young tableaux when monomials are replaced with fundamental quasisymmetric polynomials. While the number of semistandard Young tableaux depends on the number of variables used, the number of standard Young tableaux is independent of the number of variables. Therefore Gessel's expansion of Schur polynomials is significantly more compact, and makes computations far more efficient. However, even this expansion can be improved upon since, when the number of variables is small enough, the contribution of certain standard Young tableaux is zero. To resolve this, we introduce \emph{quasi-Yamanouchi tableaux} so that the fundamental quasisymmetric expansion of a Schur polynomial is precisely given by summing over quasi-Yamanouchi tableaux. This theory is developed in Section~\ref{sec:poly-qyam} after a review of Schur polynomials and quasisymmetric polynomials in Sections~\ref{sec:poly-sym} and \ref{sec:poly-qsym}, respectively.

The fundamental slide polynomials, indexed by weak compositions, are a lifting of the fundamental quasisymmetric polynomials, and the fundamental slide expansion of Schubert polynomials is precisely given by summing over \emph{quasi-Yamanouchi pipe dreams}. Just as quasi-Yamanouchi tableaux correspond to a subset of standard Young tableaux, quasi-Yamanouchi pipe dreams correspond to a subset of reduced decompositions for the indexing permutation. This gives a significantly more compact expansion for Schubert polynomials, which makes calculations far more tractable. We define slide polynomials in Section~\ref{sec:schub-slide} after reviewing Schubert polynomials in Section~\ref{sec:schub-pipe}. We extend the quasi-Yamanouchi condition to pipe dreams in Section~\ref{sec:schub-quasi}, and use it to give the fundamental slide polynomials expansion of Schubert polynomials.

One can take the \emph{stable limit} of a Schubert polynomial by embedding a permutation of $n$ into the larger symmetric group on $m+n$ and fixing the first $m$ positions. Macdonald \cite{Mac91} showed that these limits are well-defined and are exactly the Stanley symmetric functions \cite{Sta84}. The slide polynomials also have well-defined stable limits, with the monomial slide polynomials converging to monomial quasisymmetric functions and the fundamental slide polynomials converging to fundamental quasisymmetric functions. In the process, the correspondence between quasi-Yamanouchi pipe dreams and reduced decompositions becomes a bijection, and the convergence of Schubert polynomials to Stanley symmetric functions becomes clear. We give a refined notion of this stability and when it occurs. We show in Section~\ref{sec:stable-vars} that trivially increasing the number of variables leaves our functions unchanged, just as in the Schubert setting. In Section~\ref{sec:stable-limit}, we recall Stanley symmetric functions and derive the stable limits of the slide polynomials. In Section~\ref{sec:stable-refine}, we use this to understand the convergence of Schubert polynomials to Stanley symmetric functions by considering the stability of fundamental slide expansion of Schubert polynomials.

Returning to the motivating open problem of computing structure constants, in Section~\ref{sec:struct-quasi} we give a positive combinatorial rule for the structure constants of the monomial slide polynomials by generalizing the quasi-shuffle product of Hoffman \cite{Hof00}. We follow this in Section~\ref{sec:struct-slide} by giving a positive combinatorial rule for the structure constants of the fundamental slide polynomials, by means of a generalization of the shuffle product of Eilenberg and Mac Lane \cite{EM53} to weak compositions that we call the \emph{slide product}. Finally, in Section~\ref{sec:struct-schub}, we apply the slide product to give a positive Littlewood--Richardson rule for the fundamental slide expansion of a product of Schubert polynomials. By taking the stable limit, we tighten a theorem of Li \cite{Li14} stating that the product of Schubert polynomials stabilizes, and, consequently, that the product of Stanley symmetric functions can be expressed in terms of Schubert structure constants.

%%%%%%%%%%%%%%%%%%%%%%%%%%%%%%%%%%%%%%%%%%%%%%%%%%%%%%%%%%%%%%%%
%
\section{Schur polynomials}
%
%%%%%%%%%%%%%%%%%%%%%%%%%%%%%%%%%%%%%%%%%%%%%%%%%%%%%%%%%%%%%%%%
\label{sec:poly}

%%%%%%%%%%%%%%%%%%%%%%%%%%%%%%%%%%%%%%%%%%%%%%%%%%%%%%%%%%%%%%%%
\subsection{Semistandard Young tableaux}
%%%%%%%%%%%%%%%%%%%%%%%%%%%%%%%%%%%%%%%%%%%%%%%%%%%%%%%%%%%%%%%%
\label{sec:poly-sym}

We adopt notation and terminology for symmetric polynomials from \cite{Mac95}, beginning with $\Lambda_n$, the ring of polynomials in $\mathbb{Z}[x_1,\ldots,x_n]$ that are invariant under any permutation of the variables. That is, a polynomial $f \in \mathbb{Z}[x_1,\ldots,x_n]$ is \emph{symmetric} if for every (strong) composition $\alpha = (\alpha_1,\ldots,\alpha_{\ell})$, with $\ell \leq n$ and $\alpha_i>0$ for all $i$, we have
\begin{equation}
  [x_{i_1}^{\alpha_1} \cdots x_{i_{\ell}}^{\alpha_{\ell}} \mid f] = [x_{j_1}^{\alpha_1} \cdots x_{j_{\ell}}^{\alpha_{\ell}} \mid  f] 
  \label{e:symmetric}
\end{equation}
for any two sequences $(i_1,\ldots,i_{\ell}), (j_1,\ldots,j_{\ell})$ of distinct elements of $[n] = \{1,2,\ldots,n\}$, where $[x^a \mid f]$ means the coefficient of $x^a$ in $f$.

The dimension of $\Lambda_n$ as a $\mathbb{Z}$-module is the number of integer partitions of length at most $n$. A \emph{partition} is sequence $(\lambda_1 \geq \cdots \geq \lambda_\ell > 0)$ of nonnegative integers. The \emph{length of $\lambda$}, denoted by $\ell(\lambda)$, is the number of (nonzero) parts. The \emph{size of $\lambda$}, denoted by $|\lambda|$, is the sum of the parts. We draw the \emph{diagram} of a partition $\lambda$ in French notation as the set of points $(i,j)$ in the $\mathbb{Z} \times\mathbb{Z}$ lattice such that $1 \leq i \leq \lambda_j$; see Figure~\ref{fig:5441}.

\begin{figure}[ht]
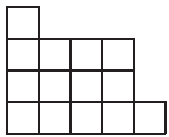

  \begin{displaymath}
    \tableau{ \ \\
      \ & \ & \ & \ \\
      \ & \ & \ & \ \\
      \ & \ & \ & \ & \ } 
  \end{displaymath}
  \caption{\label{fig:5441} The diagram for $(5,4,4,1)$.}
\end{figure}

The ring $\Lambda_n$ is graded by degree, namely
\begin{equation}
  \Lambda_n = \bigoplus_{k \geq 0} \Lambda_n^k
  \label{e:sym-graded}
\end{equation}
where $\Lambda_n^k$ consists of zero together with those symmetric polynomials homogeneous of degree $k$. As a $\mathbb{Z}$-module, $\Lambda_n^k$ has dimension equal to the number of partitions of length at most $n$ and size $k$.

By taking the inverse limit with respect to the homomorphisms $\rho_{m,n}^k : \Lambda_m^k \rightarrow \Lambda_n^k$ that specialize the variables $x_{n+1},\ldots,x_m$ to zero, we form the symmetric \emph{functions} homogeneous of degree $k$,
\begin{equation}
  \Lambda^k = \lim_{\infty\leftarrow n} \Lambda_n^k.
  \label{e:sym-limit}
\end{equation}
And, of course, we have the full ring of symmetric functions $\Lambda = \bigoplus_{k \geq0} \Lambda^k$. One may (and many do) study the symmetric polynomial ring $\Lambda_n$ by first understanding the symmetric function ring $\Lambda$ and then specializing trailing variables to zero. However, in this paper we maintain that by studying symmetric polynomials and the ways in which they are different from symmetric functions, we gain additional insights that will allow us to lift powerful ideas from the symmetric setting to arbitrary polynomials.

There are many nice bases for $\Lambda_n^k$ as beautifully exposited in \cite{Mac95}. For our current purposes, we are primarily interested in the most interesting basis, the Schur basis denoted by $\{s_{\lambda}\}$. Originally defined as a ratio of determinants, we instead give the combinatorial definition of a Schur polynomial as the generating function of semistandard Young tableaux.

A \emph{semistandard Young tableau of shape $\lambda$} is a map $T : \lambda \rightarrow \mathbb{N}$ such that
\begin{itemize}
\item $T(c) \leq T(d)$ if $c,d$ are cells in the same row of $\lambda$ with $c$ left of $d$, and 
\item $T(c) < T(d)$ if $c,d$ are cells in the same column of $\lambda$ with $c$ below $d$.
\end{itemize}
Let $\SSYT_n(\lambda)$ denote the set of semistandard Young tableaux with $T(\lambda) \subseteq [n]$. For example, the semistandard Young tableaux of shape $(3,2)$ with image in $[3]$ are given in Figure~\ref{fig:SSYT}.

\begin{figure}[ht]
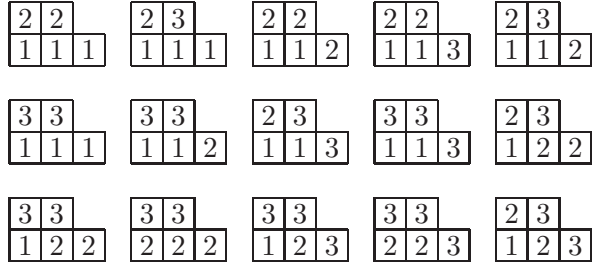

  \begin{displaymath}
    \begin{array}{ccccc}
      \tableau{ 2 & 2 \\ 1 & 1 & 1} &
      \tableau{ 2 & 3 \\ 1 & 1 & 1} &
      \tableau{ 2 & 2 \\ 1 & 1 & 2} &
      \tableau{ 2 & 2 \\ 1 & 1 & 3} &
      \tableau{ 2 & 3 \\ 1 & 1 & 2} \\ \\
      \tableau{ 3 & 3 \\ 1 & 1 & 1} &
      \tableau{ 3 & 3 \\ 1 & 1 & 2} &
      \tableau{ 2 & 3 \\ 1 & 1 & 3} &
      \tableau{ 3 & 3 \\ 1 & 1 & 3} &
      \tableau{ 2 & 3 \\ 1 & 2 & 2} \\ \\
      \tableau{ 3 & 3 \\ 1 & 2 & 2} &
      \tableau{ 3 & 3 \\ 2 & 2 & 2} &
      \tableau{ 3 & 3 \\ 1 & 2 & 3} &
      \tableau{ 3 & 3 \\ 2 & 2 & 3} &
      \tableau{ 2 & 3 \\ 1 & 2 & 3}
    \end{array}
  \end{displaymath}
  \caption{\label{fig:SSYT} The $15$ elements of $\SSYT_3(3,2)$.}
\end{figure}

A \emph{weak composition} is a sequence of nonnegative integers. To each $T \in \SSYT_n$, we associate the weak composition $\wt(T)$ whose $i$th component is equal to the number of occurrences of $i$ in $T$. For example, the weights of the first column of tableaux in Figure~\ref{fig:SSYT} are $(3,2,0),(3,0,2),(1,2,2)$, from top to bottom.

\begin{definition}
  The \emph{Schur polynomial} $s_{\lambda}(x_1,\ldots,x_n)$ is given by
  \begin{equation}
    s_{\lambda}(x_1,\ldots,x_n) = \sum_{T \in \SSYT_n(\lambda)} x^{\wt(T)},
    \label{e:schur}
  \end{equation}
  where $x^{a}$ is the monomial $x_1^{a_1} \cdots x_n^{a_n}$.
  \label{def:schur}
\end{definition}

For example, from Figure~\ref{fig:SSYT} we can compute
\begin{eqnarray}
  s_{(3,2)}(x_1,x_2,x_3) & = &  x_1^3 x_2^2  + x_1^3 x_3^2 +  x_1^2 x_2^3  +   2 x_1^2 x_2^2 x_3  +   2 x_1^2 x_2 x_3^2  +   x_1^2 x_3^3 \\\label{e:s32_x}
  & & + x_1^3 x_2 x_3 + x_1 x_2^3 x_3  +   2 x_1 x_2^2 x_3^2  +   x_1 x_2 x_3^3  +   x_2^3 x_3^2  +   x_2^2 x_3^3. \nonumber
\end{eqnarray}
Had we chosen to compute $s_{(3,2)}(x_1,\ldots,x_4)$ instead, we would have summed over the $60$ elements of $\SSYT_4(3,2)$. 

Letting $n \rightarrow \infty$ gives the Schur functions, which are well-defined both by the unbounded version of \eqref{e:schur} and by the fact that $\rho_{n+1,n}(s_{\lambda}(x_1,\ldots,x_{n+1})) = s_{\lambda}(x_1,\ldots,x_n)$. Therefore, while \eqref{e:schur} gives a beautiful \emph{combinatorial} definition for $s_{\lambda}$, this formula quickly becomes intractable.

%%%%%%%%%%%%%%%%%%%%%%%%%%%%%%%%%%%%%%%%%%%%%%%%%%%%%%%%%%%%%%%%
\subsection{Quasisymmetric polynomials}
%%%%%%%%%%%%%%%%%%%%%%%%%%%%%%%%%%%%%%%%%%%%%%%%%%%%%%%%%%%%%%%%
\label{sec:poly-qsym}

To facilitate a tractable expression for Schur polynomials, we consider the larger ring of quasisymmetric polynomials, denoted by $\QSym_n$. A polynomial $f \in \mathbb{Z}[x_1,\ldots,x_n]$ is \emph{quasisymmetric} if for every (strong) composition $\alpha = (\alpha_1,\ldots,\alpha_{\ell})$ with $\ell \leq n$, we have
\begin{equation}
  [x_{i_1}^{\alpha_1} \cdots x_{i_{\ell}}^{\alpha_{\ell}} \mid  f] = [x_{j_1}^{\alpha_1} \cdots x_{j_{\ell}}^{\alpha_{\ell}} \mid  f] 
  \label{e:quasisymmetric}
\end{equation}
for any two sequences $1 \leq i_1< \cdots < i_{\ell} \leq n$ and $1 \leq j_1< \cdots < j_{\ell} \leq n$. Clearly a symmetric polynomial is also quasisymmetric, so $\Lambda_n \subset \QSym_n$.

Like $\Lambda_n$, the ring $\QSym_n$ is graded by degree, namely
\begin{equation}
  \QSym_n = \bigoplus_{k \geq 0} \QSym_n^k
  \label{e:qsym-graded}
\end{equation}
where $\QSym_n^k$ consists of zero together with those quasisymmetric polynomials homogeneous of degree $k$. As a $\mathbb{Z}$-module, $\QSym_n^k$ has dimension equal to the number of (strong) compositions of length at most $n$ and size $k$, where size is again defined to be the sum of the parts.

As with the symmetric case, we can consider quasisymmetric \emph{functions} as the inverse limit of their polynomial counterparts using the same specialization homomorphisms $\rho_{m,n}^k$. However, as our goal remains to study polynomials, we focus primarily on the polynomial setting.

There are many nice bases for $\QSym_n^k$. For our current purposes, we are fundamentally interested in the fundamental basis defined by Gessel in his study of $P$-partitions \cite{Ges84}. To define this, though, it is convenient first to define the \emph{monomial quasisymmetric polynomials}, denoted by $M_{\alpha}$. For $\alpha = (\alpha_1,\ldots,\alpha_{\ell})$, we have
\begin{equation}
  M_{\alpha}(x_1,\cdots,x_n) = \sum_{1 \leq i_1 < \cdots < i_{\ell} \leq n} x_{i_1}^{\alpha_1} \cdots x_{i_{\ell}}^{\alpha_{\ell}}.
  \label{e:monomial}
\end{equation}
For example, $M_{(2,3)}(x_1,x_2,x_3) = x_1^2 x_2^3 + x_1^2 x_3^3 + x_2^2 x_3^3$.

Given two compositions $\alpha$ and $\beta$ of the same size, say that $\beta$ \emph{refines} $\alpha$ if there exist indices $i_1 < \cdots < i_{\ell}$ such that $\beta_{i_j+1} + \cdots + \beta_{i_{j+1}} = \alpha_{j+1}$. For example, $(1,2,2)$ refines $(3,2)$ but not $(2,3)$.

\begin{definition}[\cite{Ges84}]
  The \emph{fundamental quasisymmetric polynomial} $F_{\alpha}(x_1,\ldots,x_n)$ is given by
  \begin{equation}
    F_{\alpha}(x_1,\ldots,x_n) = \sum_{\beta \ \mathrm{refines} \ \alpha} M_{\beta}(x_1,\ldots,x_n) .
    \label{e:fundamental}
  \end{equation}
  \label{def:fundamental}
\end{definition}

For example, we have
\begin{eqnarray}
  F_{(2,3)}(x_1,x_2,x_3)  & = &  M_{(2,3)}(x_1,x_2,x_3) + M_{(2,2,1)}(x_1,x_2,x_3)\\ \label{e:F23}
  & &  + M_{(2,1,2)}(x_1,x_2,x_3) + M_{(1,1,3)}(x_1,x_2,x_3). \nonumber
\end{eqnarray}
Note that there are additional compositions that refine $(2,3)$ that do not appear as indices on the right hand side, e.g. $(1,1,2,1)$, because their length exceeds the number of variables. 

The fundamental basis gives a more compact expansion for Schur polynomials in terms of standard Young tableaux. A \emph{standard Young tableau} is a semistandard Young tableau $T : \lambda \stackrel{\sim}{\rightarrow} [k]$, where $k = |\lambda|$. Let $\SYT(\lambda)$ denote the set of standard Young tableaux. Note that unlike semistandard tableaux, the set of standard tableaux is independent of $n$. For example, the standard Young tableaux of shape $(3,2)$ are given in Figure~\ref{fig:SYT}.

\begin{figure}[ht]
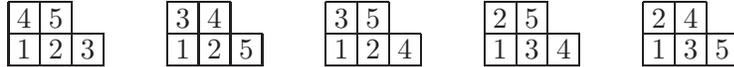

  \begin{displaymath}
    \begin{array}{c@{\hskip 2\cellsize}c@{\hskip 2\cellsize}c@{\hskip 2\cellsize}c@{\hskip 2\cellsize}c}
      \tableau{ 4 & 5 \\ 1 & 2 & 3 } &
      \tableau{ 3 & 4 \\ 1 & 2 & 5 } &
      \tableau{ 3 & 5 \\ 1 & 2 & 4 } &
      \tableau{ 2 & 5 \\ 1 & 3 & 4 } &
      \tableau{ 2 & 4 \\ 1 & 3 & 5 } 
    \end{array}
  \end{displaymath}
  \caption{\label{fig:SYT} The $5$ elements of $\SYT(3,2)$.}
\end{figure}

To each standard Young tableau $T$, we associate the \emph{descent composition}, denoted by $\Des(T)$, obtained by taking lengths of successive increasing runs of the entries by reading $1$ to $k$ in order, and beginning a new run whenever $i+1$ appears weakly left of $i$. For example, the descent compositions of the tableaux in Figure~\ref{fig:SYT} are, respectively, $(3,2), (2,3), (2,2,1), (1,3,1), (1,2,2)$.

\begin{theorem}[\cite{Ges84}]
  The Schur polynomial $s_{\lambda}(x_1,\ldots,x_n)$ is given by
  \begin{equation}
    s_{\lambda}(x_1,\ldots,x_n) = \sum_{T \in \SYT(\lambda)} F_{\Des(T)}(x_1,\ldots,x_n),
    \label{e:gessel}
  \end{equation}
  \label{thm:gessel}
\end{theorem}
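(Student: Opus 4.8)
The plan is to establish the identity \eqref{e:gessel} by refining the semistandard expansion \eqref{e:schur} according to which standard tableau each semistandard tableau ``standardizes'' to. First I would recall the standardization map: given $T \in \SSYT_n(\lambda)$, one produces $\stand(T) \in \SYT(\lambda)$ by relabeling the cells $1, 2, \ldots, |\lambda|$ so that cells with smaller entries get smaller labels, breaking ties among equal entries of $T$ by reading them in the order prescribed by the column-reading (or equivalently, so that equal entries in a row are standardized left-to-right and the semistandard column-strictness is preserved). The key combinatorial fact to verify is that for a fixed $U \in \SYT(\lambda)$, the set $\{T \in \SSYT_n(\lambda) : \stand(T) = U\}$ is in weight-preserving bijection with the monomials appearing in $F_{\Des(U)}(x_1,\ldots,x_n)$; more precisely, the weak composition $\wt(T)$ ranges exactly over those $\wt$ arising as $(\,|\{j : i_{a} \le j < i_{a+1}\}|\,)$-type refinements encoded by some sequence $1 \le i_1 < \cdots < i_\ell \le n$ compatible with $\Des(U)$.

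The main steps, in order, would be: (1) define $\stand$ precisely and check it is well-defined, i.e.\ that the relabeling of a semistandard tableau is genuinely a standard Young tableau; (2) characterize the fibers of $\stand$: show that $T$ with $\stand(T)=U$ are exactly those obtained from $U$ by choosing a weakly increasing sequence of values to assign to the increasing runs of $U$ (the runs being precisely the maximal intervals $[i,i+1,\ldots]$ of labels of $U$ with no descent), subject only to the constraint that the value strictly increases across a descent of $U$ — this is the content of how $\Des(U)$ controls which monomials appear; (3) sum: for fixed $U$, $\sum_{\stand(T)=U} x^{\wt(T)}$ is by (2) exactly $\sum_{\beta \text{ refines } \Des(U)} M_\beta(x_1,\ldots,x_n) = F_{\Des(U)}(x_1,\ldots,x_n)$, using Definition~\ref{def:fundamental}; (4) since $\stand$ partitions $\SSYT_n(\lambda)$ into fibers indexed by $\SYT(\lambda)$, summing over all $U$ and invoking \eqref{e:schur} yields \eqref{e:gessel}.

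I expect step (2) — pinning down the fiber of standardization and matching it bijectively to refinements of the descent composition — to be the main obstacle, since one must argue carefully that the descent set of $\stand(T)$ (where a descent occurs at position $i$ when $i+1$ sits weakly left of $i$) is precisely dictated by where $T$ forces a strict increase, and conversely that any refinement of $\Des(U)$ is realized. The subtlety is the interaction between row-weak-increase and column-strict-increase in $T$: a repeated value in $T$ can occur across a row but never down a column, and this is exactly what makes the standardized labels of equal entries form an increasing run (no descent), while a strict increase in $T$ — forced when the next label lies weakly left, possibly in the same or a lower row — is what creates a descent. Once this correspondence is nailed down, the bounded-variable bookkeeping (that only refinements of length at most $n$ contribute, matching the truncation in \eqref{e:monomial} and \eqref{e:fundamental}) is automatic and the rest is routine summation.
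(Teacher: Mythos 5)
Your argument is correct, and it is the classical proof of Gessel's theorem; note, however, that the paper itself gives no proof of Theorem~\ref{thm:gessel} — it is quoted from \cite{Ges84} — so there is no in-paper argument to match against. What the paper does prove by the same technique is the refined statement, Theorem~\ref{thm:left_schur}: there the authors group $\SSYT_n(\lambda)$ by a \emph{destandardization} map onto quasi-Yamanouchi tableaux and show that each fiber's weight generating function is exactly $F_{\wt(T)}(x_1,\ldots,x_n)$, with weights running over refinements of $\wt(T)$; your proposal is the mirror image of this, grouping $\SSYT_n(\lambda)$ by standardization onto $\SYT(\lambda)$ and showing each fiber contributes $F_{\Des(U)}(x_1,\ldots,x_n)$. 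Your step (2) is the real content and your description of it is right: equal entries of $T$ occupy a horizontal strip and are standardized left to right, so they never create a descent, while a descent of $U$ (label $i+1$ weakly left of $i$) forces a strict increase of values; conversely any weakly increasing assignment of values in $[n]$ to the labels, strict at descents, is semistandard and standardizes back to $U$, and summing these monomials gives precisely $\sum_{\beta\ \mathrm{refines}\ \Des(U)} M_{\beta}(x_1,\ldots,x_n) = F_{\Des(U)}(x_1,\ldots,x_n)$ by Definition~\ref{def:fundamental}, including the truncation to refinements of length at most $n$ (so fibers may be empty and the corresponding $F$ may vanish, consistent with the discussion around \eqref{e:s32_F_2vars}). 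The trade-off between the two routes is the one the paper is built around: your standardization decomposition indexes the expansion by $\SYT(\lambda)$ independently of $n$, whereas the paper's destandardization indexes it by $\QYT_n(\lambda)$, which eliminates the vanishing terms and yields the sharper Theorem~\ref{thm:left_schur}.
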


For example, from Figure~\ref{fig:SYT} we can compute
\begin{eqnarray}
  s_{(3,2)}(x_1,x_2,x_3) & = & F_{(3,2)}(x_1,x_2,x_3) + F_{(2,3)}(x_1,x_2,x_3) + F_{(2,2,1)}(x_1,x_2,x_3) \\\label{e:s32_F}
  & & + F_{(1,3,1)}(x_1,x_2,x_3) + F_{(1,2,2)}(x_1,x_2,x_3) \nonumber
\end{eqnarray}
Whereas the number of terms in the monomial expansion of $s_{\lambda}$ given by \eqref{e:schur} increases as the number of variables increases, the number of terms in the fundamental expansion of $s_{\lambda}$ given by \eqref{e:gessel} is independent of the number of variables. Even for our small example of $s_{(3,2)}(x_1,x_2,x_3)$, the improvement of \eqref{e:s32_F} over \eqref{e:s32_x} is considerable. Taking inverse limits, the expansions in \eqref{e:gessel} are \emph{finite}, an infinite improvement over the monomial expansion.

While generally more compact, the monomial expansion \eqref{e:schur} does not always have more terms than \eqref{e:gessel} since some of the terms on the right hand side of \eqref{e:gessel} can be zero. For example, we have the following expansions for $s_{(3,2)}$ in two variables,
\begin{eqnarray}
  s_{(3,2)}(x_1,x_2) & = & x_1^3 x_2^2 + x_1^2 x_2^3 \\ \label{e:s32_F_2vars}
  & = & F_{(3,2)}(x_1,x_2) + F_{(2,3)}(x_1,x_2) \\ 
  & & + F_{(2,2,1)}(x_1,x_2) + F_{(1,3,1)}(x_1,x_2) + F_{(1,2,2)}(x_1,x_2).\nonumber
\end{eqnarray}

Note that the latter three terms in the latter expansion \eqref{e:s32_F_2vars} are, in fact, zero. This points to a missing concept in the theory that allows one to avoid writing out unnecessary terms.

%%%%%%%%%%%%%%%%%%%%%%%%%%%%%%%%%%%%%%%%%%%%%%%%%%%%%%%%%%%%%%%%
\subsection{Quasi-Yamanouchi tableaux}
%%%%%%%%%%%%%%%%%%%%%%%%%%%%%%%%%%%%%%%%%%%%%%%%%%%%%%%%%%%%%%%%
\label{sec:poly-qyam}

In order to avoid unnecessary terms and to complete a missing concept in Schur polynomial expansions, we introduce the notion of \emph{quasi-Yamanouchi Young tableaux}.

\begin{definition}
  A semistandard Young tableau is \emph{quasi-Yamanouchi} if for all $i>1$, the leftmost occurrence of $i$ lies weakly left of some $i-1$. Let $\QYT_n(\lambda)$ denote the set of quasi-Yamanouchi tableaux with image in $[n]$.
  \label{def:quasi-Yam-tab}
\end{definition}

For example, the quasi-Yamanouchi tableaux of shape $(3,2)$ with image in $[3]$ are given in Figure~\ref{fig:LYT}.

\begin{figure}[ht]
  \begin{displaymath}
    \begin{array}{c@{\hskip 2\cellsize}c@{\hskip 2\cellsize}c@{\hskip 2\cellsize}c@{\hskip 2\cellsize}c}
      \tableau{ 2 & 2 \\ 1 & 1 & 1 } &
      \tableau{ 2 & 2 \\ 1 & 1 & 2 } &
      \tableau{ 2 & 3 \\ 1 & 1 & 2 } &
      \tableau{ 2 & 3 \\ 1 & 2 & 2 } &
      \tableau{ 2 & 3 \\ 1 & 2 & 3 } 
    \end{array}
  \end{displaymath}
  \caption{\label{fig:LYT} The $5$ elements of $\QYT_3(3,2)$.}
\end{figure}

Note that if $i$ occurs in $T$ for some $i>1$, then $i-1$ must also occur in $T$. In particular, the weight of a quasi-Yamanouchi tableau is a strong composition. This implies that the number of quasi-Yamanouchi tableaux is bounded as $n$ grows. In fact, a stronger statement is true.

\begin{definition}
  Define the \emph{destandardization of $T$}, denoted by $\destand(T)$, to be the tableau constructed as follows. If the leftmost $i$ lies strictly right of the rightmost $i-1$, then decrement every $i$ to $i-1$. Repeat until no $i$ satisfies the condition.
  \label{def:T-destand}  
\end{definition}

For example, see Figure~\ref{fig:T-destand}.

\begin{figure}[ht]
  \begin{displaymath}
    \tableau{8 \\ 5 & 9 & 9 & 9 \\ 3 & 3 & 5 & 8 \\ 1 & 2 & 2 & 3 & 6}
    \hspace{1\cellsize} \stackrel{\destand}{\longrightarrow} \hspace{1\cellsize}
    \tableau{4 \\ 3 & 5 & 5 & 5 \\ 2 & 2 & 3 & 4 \\ 1 & 1 & 1 & 2 & 3}
  \end{displaymath}
  \caption{\label{fig:T-destand}An example of destandardization of a semistandard Young tableau.}
\end{figure}

\begin{lemma}
  The destandardization map is well-defined and satisfies the following
  \begin{enumerate}
  \item for $T \in \SSYT_n(\lambda)$, $\destand(T) \in \QYT_n(\lambda)$;
  \item for $T \in \SSYT_n(\lambda)$, $\destand(T)=T$ if and only if $T \in \QYT_n(\lambda)$;
  \item $\destand:\SSYT_n(\lambda) \rightarrow \QYT_n(\lambda)$ is surjective; and
  \item $\destand:\SSYT_n(\lambda) \rightarrow \QYT_n(\lambda)$ is injective if and only if $n \leq \ell(\lambda)$.
  \end{enumerate}
  \label{lem:T-destand}
\end{lemma}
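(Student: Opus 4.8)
The plan is to establish the four parts in a logical order that builds on itself, starting with well-definedness. \textbf{Well-definedness.} First I would show that $\destand$ terminates and produces a semistandard tableau. Each application of the decrementing step strictly decreases the sum of entries, so the process terminates. The more delicate point is that the result is always semistandard: when every $i$ is decremented to $i-1$ under the hypothesis that the leftmost $i$ lies strictly right of the rightmost $i-1$, I must check that no column acquires a repeated entry and no row becomes weakly decreasing in the wrong way. The key observation is that the set of cells containing $i$ forms a horizontal strip relative to the cells containing $i-1$ precisely because of the ``leftmost $i$ strictly right of rightmost $i-1$'' condition: no cell with $i$ sits directly above a cell with $i-1$, and no two cells with the value $i$ share a column (they never did). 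Hence decrementing all the $i$'s to $i-1$ keeps columns strictly increasing and rows weakly increasing, so we stay in $\SSYT_n(\lambda)$. (Entries only decrease, so the image stays in $[n]$.) I should also note the result is independent of the order in which eligible values are processed, which follows since processing one value cannot destroy the eligibility condition for a different value, or more cleanly by characterizing $\destand(T)$ intrinsically.

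\textbf{Parts (1) and (2).} Once well-definedness is in hand, (1) is essentially immediate: by construction we repeat the decrementing step until no value $i$ has its leftmost occurrence strictly right of the rightmost $i-1$; but this terminal condition says exactly that for every $i>1$ appearing in the tableau, the leftmost $i$ lies weakly left of \emph{some} $i-1$ (indeed weakly left of the rightmost such), which is the quasi-Yamanouchi condition. So $\destand(T) \in \QYT_n(\lambda)$. For (2), if $T \in \QYT_n(\lambda)$ then no value is eligible for decrementing, so $\destand(T) = T$; conversely if $\destand(T) = T$ then $T$ is a fixed point, hence satisfies the terminal condition, hence is quasi-Yamanouchi.

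\textbf{Part (3).} Surjectivity follows from (2): every $Q \in \QYT_n(\lambda)$ is itself an element of $\SSYT_n(\lambda)$ with $\destand(Q) = Q$, so it is in the image.

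\textbf{Part (4).} This is the main obstacle and needs a genuine counting/combinatorial argument in both directions. For the ``if'' direction ($n \le \ell(\lambda) \implies$ injective), I would argue that when $n \le \ell(\lambda)$, every $T \in \SSYT_n(\lambda)$ already has all of $1, \ldots, n$ appearing (the first column alone has $\ell(\lambda) \ge n$ strictly increasing entries from $[n]$, forcing it to be exactly $1,2,\ldots,n$ read bottom to top when $n=\ell(\lambda)$; more care is needed when $n<\ell(\lambda)$, but in any case each value $1,\dots,n$ must occur). Then I claim $\destand$ is actually the identity on all of $\SSYT_n(\lambda)$ here: since the first column is forced to contain $1$ through $\ell(\lambda)$ in the bottom $\ell(\lambda)$ cells — wait, this needs the sharper claim that every value's leftmost occurrence is in column $1$. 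Actually the cleanest route: show that if $n \le \ell(\lambda)$ then no decrement is ever possible because value $i$ occurs in the first column (rows $1$ through some point) hence its leftmost occurrence is in column $1$, which is weakly left of everything, in particular of any $i-1$; so $\destand = \mathrm{id}$ and is trivially injective. For the ``only if'' direction I would show that when $n > \ell(\lambda)$, one can exhibit two distinct semistandard tableaux mapping to the same quasi-Yamanouchi tableau — for instance take any $Q \in \QYT_n(\lambda)$ whose largest entry $m$ satisfies $m < n$ (such $Q$ exists since quasi-Yamanouchi weights are strong compositions of length $\le \ell(\lambda) < n$), and produce a tableau $T \ne Q$ with $\destand(T) = Q$ by ``promoting'' a suitable occurrence of the top value to value $n$ in a way that still leaves it eligible to be decremented back. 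Making this promotion explicit and verifying $\destand(T) = Q$ is the fiddly part; I expect to use that one can always raise the entries of the top row (or a horizontal strip at the top) to create a gap that destandardizes back. I anticipate this converse direction — constructing the explicit non-injective witness and checking it destandardizes correctly — to be the main technical obstacle, while (1)--(3) and the forward direction of (4) are comparatively routine consequences of the definitions.
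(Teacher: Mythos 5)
Your treatment of well-definedness and of parts (1)--(3) is correct and essentially the paper's argument (the paper is terser, but the content is the same). The genuine gap is in the ``only if'' direction of (4), which you leave as a plan rather than a proof: you never actually construct the two distinct tableaux with the same image, and the auxiliary claim you invoke to produce a suitable $Q$ --- that quasi-Yamanouchi weights are strong compositions of length at most $\ell(\lambda)$ --- is false. For $\lambda=(2,2)$, the tableau with bottom row $1\,2$ and top row $2\,3$ is quasi-Yamanouchi with weight $(1,2,1)$, of length $3>\ell(\lambda)$. (The existence you actually need is easy without that claim: the tableau with every cell of row $i$ equal to $i$ lies in $\QYT_n(\lambda)$ and has largest entry $\ell(\lambda)<n$; incrementing every occurrence of the maximal entry $m$ to $m+1$ yields a semistandard $T\neq Q$ with $\destand(T)=Q$. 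But as written, your ``promote a suitable occurrence'' step is exactly the part that is missing.)

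You also miss the much simpler route, which is the paper's: by (1)--(3), $\destand$ sends $\SSYT_n(\lambda)$ into $\QYT_n(\lambda)$ and fixes $\QYT_n(\lambda)$ pointwise, so injectivity fails as soon as $\SSYT_n(\lambda)\setminus\QYT_n(\lambda)\neq\emptyset$: for such a $T$, the tableaux $T$ and $\destand(T)$ are distinct and have the same image. Hence for $n>\ell(\lambda)$ one only needs to exhibit a single non-quasi-Yamanouchi tableau, e.g.\ the filling with $i+1$ in every cell of row $i$ (the value $1$ does not occur, so the condition already fails at $i=2$); no preimage construction for a given $Q$ is required. Two smaller points on your ``if'' direction: for $n<\ell(\lambda)$ the clean statement is that $\SSYT_n(\lambda)$ is empty (the first column needs $\ell(\lambda)$ distinct values from $[n]$), so injectivity is vacuous --- your hedged ``each value $1,\dots,n$ must occur'' is only vacuously true there; for $n=\ell(\lambda)$ your forced-first-column argument is exactly the paper's.
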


\begin{proof}
  The process of destandardization terminates only if the quasi-Yamanouchi condition is satisfied, proving (1) and (2), and property (3) follows from (2). For property (4), both sets are empty if $n<\ell(\lambda)$, and when $n=\ell(\lambda)$, the first column of each semistandard tableaux contains $1,\ldots,\ell(\lambda)$, thus satisfying the quasi-Yamanouchi condition. For $n>\ell(\lambda)$, the filling with $i+1$ in every cell in row $i$ is not quasi-Yamanouchi. Hence the map is not injective.
\end{proof}

Our main purpose in introducing these new objects is obtain the following precise expansion for Schur polynomials.

\begin{theorem}
  The Schur polynomial $s_{\lambda}(x_1,\ldots,x_n)$ is given by
  \begin{equation}
    s_{\lambda}(x_1,\ldots,x_n) = \sum_{T \in \QYT_n(\lambda)} F_{\wt(T)}(x_1,\ldots,x_n),
    \label{e:left_schur}
  \end{equation}
  where all terms on the right hand side are nonzero.
  \label{thm:left_schur}
\end{theorem}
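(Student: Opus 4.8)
The plan is to start from Gessel's expansion \eqref{e:gessel} and reorganize the sum over standard Young tableaux by grouping them according to the quasi-Yamanouchi tableau they ``come from'' under destandardization, using Lemma~\ref{lem:T-destand} as the organizing principle. The key observation is that the fundamental quasisymmetric polynomials in a bounded number of variables satisfy a collapsing identity: $F_{\alpha}(x_1,\ldots,x_n) = 0$ whenever $\ell(\alpha) > n$, and more generally many $F_{\alpha}$ with $\ell(\alpha) \le n$ but with $\alpha$ ``refinable'' coincide in low-variable specializations. So first I would establish the precise statement needed: for a standard Young tableau $T$, the polynomial $F_{\Des(T)}(x_1,\ldots,x_n)$ depends only on the destandardization data, and in fact $\sum_{T} F_{\Des(T)}(x_1,\ldots,x_n)$ over all $T \in \SYT(\lambda)$ can be rewritten as $\sum_{U \in \QYT_n(\lambda)} F_{\wt(U)}(x_1,\ldots,x_n)$.

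The cleaner route, which I expect to be the actual argument, is to work directly with the monomial expansion. From \eqref{e:schur} we have $s_\lambda(x_1,\ldots,x_n) = \sum_{T \in \SSYT_n(\lambda)} x^{\wt(T)}$. Partition $\SSYT_n(\lambda)$ into fibers of the destandardization map $\destand$; by Lemma~\ref{lem:T-destand}(1),(3) the fibers are indexed by $U \in \QYT_n(\lambda)$. The claim I would prove is that for each fixed $U \in \QYT_n(\lambda)$,
\begin{equation}
  \sum_{\substack{T \in \SSYT_n(\lambda) \\ \destand(T) = U}} x^{\wt(T)} = F_{\wt(U)}(x_1,\ldots,x_n).
  \label{e:fiber-sum}
\end{equation}
Granting \eqref{e:fiber-sum}, summing over all $U \in \QYT_n(\lambda)$ gives \eqref{e:left_schur}, and nonvanishing of each term is immediate since $x^{\wt(U)}$ itself occurs on the left side of \eqref{e:fiber-sum} (indeed $U$ is in its own fiber by Lemma~\ref{lem:T-destand}(2)).

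To prove \eqref{e:fiber-sum} I would analyze which semistandard tableaux $T$ destandardize to a given $U$. Destandardization only ever decrements a value $i$ down to $i-1$ when the leftmost $i$ is strictly right of the rightmost $i-1$; running this in reverse, the tableaux $T$ with $\destand(T) = U$ are obtained from $U$ by choosing an order-preserving ``spreading'' of the entries — replacing the set of values used by $U$, say $\{1,\ldots,r\}$ where $r = \ell(\wt(U))$, with an increasing sequence $1 \le i_1 < \cdots$ of actual values in $[n]$, but only allowing a value to be split across a ``descent'' of $U$ (a place where the leftmost occurrence of the next value is strictly to the right). Tracking the bookkeeping, the set of weights $\wt(T)$ arising this way is exactly the set of weak compositions supported on increasing index sets whose ``flattening'' refines $\wt(U)$ in the sense of Definition~\ref{def:fundamental}; this matches the monomial expansion of $F_{\wt(U)}(x_1,\ldots,x_n)$ from \eqref{e:fundamental} and \eqref{e:monomial}, with each such monomial occurring exactly once.

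The main obstacle is making the reverse-destandardization analysis precise enough to get a clean bijection rather than just a count: I need to show that the map sending $T \mapsto (\destand(T), \wt(T))$ is injective on each fiber and that its image on the fiber over $U$ is precisely the refinements of $\wt(U)$ that fit in $n$ variables. The subtlety is that ``descents'' of $U$ and genuine ties interact — a value in $U$ may be splittable in reverse-destandardization exactly at the positions recorded by the ties/non-ties of consecutive leftmost occurrences — and one must verify this combinatorial condition coincides with the refinement condition on $\wt(U)$. An alternative that sidesteps some of this is to instead prove $F_{\wt(U)} = \sum_{T : \destand_{\SYT}(T) = U} F_{\Des(T)}$ at the level of standard tableaux and quote \eqref{e:gessel}, but either way the crux is the same compatibility between destandardization and refinement, which I would isolate as the key lemma and verify by a direct induction on the number of distinct values.
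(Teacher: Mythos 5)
Your second (``cleaner'') route is exactly the paper's argument: it partitions $\SSYT_n(\lambda)$ into the fibers of $\destand$ and shows the fiber over each $U \in \QYT_n(\lambda)$ contributes precisely $F_{\wt(U)}(x_1,\ldots,x_n)$, the weights occurring in a fiber being exactly the length-$n$ weak compositions whose flattening refines $\wt(U)$, each once. The step you defer to a ``key lemma'' is done in the paper by an explicit relabeling construction (for a given such weight $b$, change the $j$'s of $U$ from left to right into the consecutive values dictated by the refinement), which gives existence, with uniqueness from the lack of choice at each step; in particular no ``descent'' restriction on where a value of $U$ may be split is needed --- any refinement of $\wt(U)$ that fits in $n$ variables occurs.
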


\begin{proof}
  Note that if $\destand(S)=T$, then $\wt(S)$ refines $\wt(T)$ since $T$ is obtained by changing \emph{all} $i$'s to $i-1$'s. Conversely, we claim that given $T \in \QYT_n(\lambda)$, for every weak composition $b$ of length $n$ such that $b$ with $0$ parts removed refines $\wt(T)$ as (strong) compositions, there is a unique $S \in \SSYT_n(\lambda)$ with $\wt(S) = b$ such that $\destand(S) = T$. From the claim, for $T \in \QYT_n(\lambda)$, we have
  \begin{displaymath}
    \sum_{S \in \destand^{-1}(T)} x^{\wt(S)} = F_{\wt(T)}(x_1,\ldots,x_n).
  \end{displaymath}
  The theorem follows from this and Lemma~\ref{lem:T-destand}.
  
  To construct $S$ from $b$ and $T$, for $j = \ell(\wt(T)),\ldots,1$, if $\wt(T)_{j} = b_{i_{j-1} + 1} + \cdots + b_{i_{j}}$, then, from left to right, change each of the first $b_{i_{j-1} + 1}$ $j$'s to $i_{j-1} + 1$, the next $b_{i_{j-1} + 2}$ $j$'s to $i_{j-1} + 2$, and so on. Existence is proved, and uniqueness follows from the lack of choice at every step. 
\end{proof}

For example, from Figure~\ref{fig:LYT} we can compute
\begin{equation}
  s_{(3,2)}(x_1,x_2) = F_{(3,2)}(x_1,x_2) + F_{(2,3)}(x_1,x_2).
\end{equation}

%%%%%%%%%%%%%%%%%%%%%%%%%%%%%%%%%%%%%%%%%%%%%%%%%%%%%%%%%%%%%%%%
%
\section{Schubert polynomials}
%
%%%%%%%%%%%%%%%%%%%%%%%%%%%%%%%%%%%%%%%%%%%%%%%%%%%%%%%%%%%%%%%%
\label{sec:schub}

%%%%%%%%%%%%%%%%%%%%%%%%%%%%%%%%%%%%%%%%%%%%%%%%%%%%%%%%%%%%%%%%
\subsection{Pipe dreams}
%%%%%%%%%%%%%%%%%%%%%%%%%%%%%%%%%%%%%%%%%%%%%%%%%%%%%%%%%%%%%%%%
\label{sec:schub-pipe}

We lift our attention now to the full polynomial ring in $n$ variables, $\Poly_n = \mathbb{Z}[x_1,\ldots,x_n]$, which contains both quasisymmetric polynomials and symmetric polynomials as subrings. The polynomial ring $\Poly_n$ is graded by degree, namely
\begin{equation}
  \Poly_n = \bigoplus_{k \geq 0} \Poly_n^k
  \label{e:poly-graded}
\end{equation}
where $\Poly_n^k$ consists of zero together with those polynomials homogeneous of degree $k$, and, of course, we have $\Lambda_n^k \subset \QSym_n^k \subset \Poly_n^k$. As a $\mathbb{Z}$-module, $\Poly_n^k$ has dimension equal to the number of weak compositions of length at most $n$ and size $k$, where size is again defined to be the sum of the parts.

Given a permutation $w \in \mathcal{S}_{\infty}$, written in one-line notation, say that a pair $(i,j)$ with $i<j$ is an \emph{inversion} of $w$ if $w_i > w_j$. Define the \emph{Lehmer code} $L(w)$ of a permutation $w$ to be the weak composition whose $i$th term is the number of indices $j$ for which $(i,j)$ is an inversion. For example, $L(146235) = (0,2,3,0,0,0)$. This defines a bijection between weak compositions and permutations. Say that $i$ is a \emph{descent} of $w$ if $w_i > w_{i+1}$. Using this bijection, an alternative indexing set for a basis of $\Poly_n^k$ is given by permutations $w \in \mathcal{S}_{\infty}$ with no descents beyond position $n$ and exactly $k$ inversions.

Schubert polynomials, denoted by $\sch_w$, originally defined by Lascoux and Sch\"{u}tzenberger \cite{LS82}, form an important $\mathbb{Z}$-basis for $\Poly_n^k$. Lascoux and Sch\"{u}tzenberger showed that the Schubert polynomials represent Schubert classes in the cohomology of the flag manifold. Originally defined in terms of divided difference operators, we instead give the combinatorial definition as the generating function of reduced pipe dreams \cite{BJS93,FS94}. For more on Schubert polynomials, see \cite{Mac91}.

Consistent with our treatment of tableaux, we adopt the French notation for pipe dreams as well. A \emph{(reduced) pipe dream} is a tiling of the first quadrant of $\mathbb{Z} \times \mathbb{Z}$ with \emph{elbows} $\elbow$ and finitely many \emph{crosses} $\cross$ such that no two lines, or \emph{pipes}, cross more than once. The \emph{shape} of a pipe dream is the permutation of $\mathcal{S}_{\infty}$ obtained by following the pipes from the $y$-axis to the $x$-axis. 

\begin{figure}[ht]
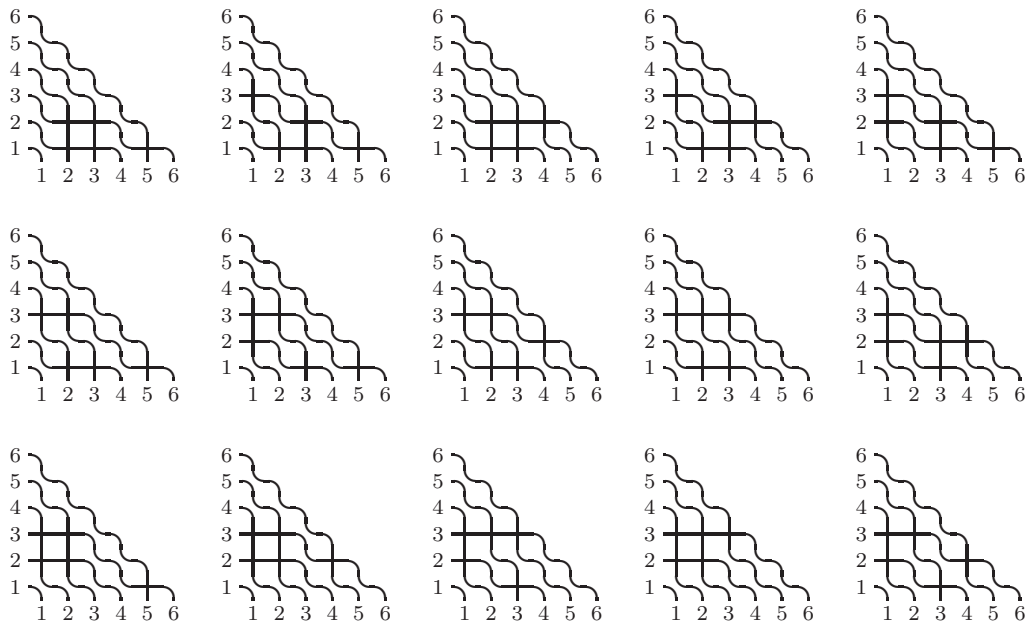

\begin{displaymath}
  \begin{array}{ccccc}
    \pipes{%
      6 & \upelb \\
      5 & \elbow & \upelb \\
      4 & \elbow & \elbow & \upelb \\
      3 & \elbow & \elbow & \elbow & \upelb \\
      2 & \elbow & \cross & \cross & \elbow & \upelb \\
      1 & \elbow & \cross & \cross & \elbow & \cross & \upelb \\
      & 1 & 2 & 3 & 4 & 5 & 6} &
    \pipes{%
      6 & \upelb \\
      5 & \elbow & \upelb \\
      4 & \elbow & \elbow & \upelb \\
      3 & \cross & \elbow & \elbow & \upelb \\
      2 & \elbow & \elbow & \cross & \elbow & \upelb \\
      1 & \elbow & \cross & \cross & \elbow & \cross & \upelb \\
      & 1 & 2 & 3 & 4 & 5 & 6} &
    \pipes{%
      6 & \upelb \\
      5 & \elbow & \upelb \\
      4 & \elbow & \elbow & \upelb \\
      3 & \elbow & \elbow & \elbow & \upelb \\
      2 & \elbow & \cross & \cross & \cross & \upelb \\
      1 & \elbow & \cross & \cross & \elbow & \elbow & \upelb \\
      & 1 & 2 & 3 & 4 & 5 & 6} &
    \pipes{%
      6 & \upelb \\
      5 & \elbow & \upelb \\
      4 & \elbow & \elbow & \upelb \\
      3 & \cross & \elbow & \elbow & \upelb \\
      2 & \elbow & \elbow & \cross & \cross & \upelb \\
      1 & \elbow & \cross & \cross & \elbow & \elbow & \upelb \\
      & 1 & 2 & 3 & 4 & 5 & 6} &
    \pipes{%
      6 & \upelb \\
      5 & \elbow & \upelb \\
      4 & \elbow & \elbow & \upelb \\
      3 & \cross & \elbow & \elbow & \upelb \\
      2 & \cross & \elbow & \cross & \elbow & \upelb \\
      1 & \elbow & \elbow & \cross & \elbow & \cross & \upelb \\
      & 1 & 2 & 3 & 4 & 5 & 6}  \\ \\
    \pipes{%
      6 & \upelb \\
      5 & \elbow & \upelb \\
      4 & \elbow & \elbow & \upelb \\
      3 & \cross & \cross & \elbow & \upelb \\
      2 & \elbow & \elbow & \elbow & \elbow & \upelb \\
      1 & \elbow & \cross & \cross & \elbow & \cross & \upelb \\
      & 1 & 2 & 3 & 4 & 5 & 6} &
    \pipes{%
      6 & \upelb \\
      5 & \elbow & \upelb \\
      4 & \elbow & \elbow & \upelb \\
      3 & \cross & \cross & \elbow & \upelb \\
      2 & \cross & \elbow & \elbow & \elbow & \upelb \\
      1 & \elbow & \elbow & \cross & \elbow & \cross & \upelb \\
      & 1 & 2 & 3 & 4 & 5 & 6} &
    \pipes{%
      6 & \upelb \\
      5 & \elbow & \upelb \\
      4 & \elbow & \elbow & \upelb \\
      3 & \cross & \cross & \elbow & \upelb \\
      2 & \elbow & \elbow & \elbow & \cross & \upelb \\
      1 & \elbow & \cross & \cross & \elbow & \elbow & \upelb \\
      & 1 & 2 & 3 & 4 & 5 & 6} &
    \pipes{%
      6 & \upelb \\
      5 & \elbow & \upelb \\
      4 & \elbow & \elbow & \upelb \\
      3 & \cross & \cross & \cross & \upelb \\
      2 & \elbow & \elbow & \elbow & \elbow & \upelb \\
      1 & \elbow & \cross & \cross & \elbow & \elbow & \upelb \\
      & 1 & 2 & 3 & 4 & 5 & 6} &
    \pipes{%
      6 & \upelb \\
      5 & \elbow & \upelb \\
      4 & \elbow & \elbow & \upelb \\
      3 & \cross & \elbow & \elbow & \upelb \\
      2 & \cross & \elbow & \cross & \cross & \upelb \\
      1 & \elbow & \elbow & \cross & \elbow & \elbow & \upelb \\
      & 1 & 2 & 3 & 4 & 5 & 6} \\ \\
    \pipes{%
      6 & \upelb \\
      5 & \elbow & \upelb \\
      4 & \elbow & \elbow & \upelb \\
      3 & \cross & \cross & \elbow & \upelb \\
      2 & \cross & \cross & \elbow & \elbow & \upelb \\
      1 & \elbow & \elbow & \elbow & \elbow & \cross & \upelb \\
      & 1 & 2 & 3 & 4 & 5 & 6} &
    \pipes{%
      6 & \upelb \\
      5 & \elbow & \upelb \\
      4 & \elbow & \elbow & \upelb \\
      3 & \cross & \cross & \elbow & \upelb \\
      2 & \cross & \cross & \elbow & \cross & \upelb \\
      1 & \elbow & \elbow & \elbow & \elbow & \elbow & \upelb \\
      & 1 & 2 & 3 & 4 & 5 & 6} &
    \pipes{%
      6 & \upelb \\
      5 & \elbow & \upelb \\
      4 & \elbow & \elbow & \upelb \\
      3 & \cross & \cross & \cross & \upelb \\
      2 & \cross & \elbow & \elbow & \elbow & \upelb \\
      1 & \elbow & \elbow & \cross & \elbow & \elbow & \upelb \\
      & 1 & 2 & 3 & 4 & 5 & 6} &
    \pipes{%
      6 & \upelb \\
      5 & \elbow & \upelb \\
      4 & \elbow & \elbow & \upelb \\
      3 & \cross & \cross & \cross & \upelb \\
      2 & \cross & \cross & \elbow & \elbow & \upelb \\
      1 & \elbow & \elbow & \elbow & \elbow & \elbow & \upelb \\
      & 1 & 2 & 3 & 4 & 5 & 6} &
    \pipes{%
      6 & \upelb \\
      5 & \elbow & \upelb \\
      4 & \elbow & \elbow & \upelb \\
      3 & \cross & \cross & \elbow & \upelb \\
      2 & \cross & \elbow & \elbow & \cross & \upelb \\
      1 & \elbow & \elbow & \cross & \elbow & \elbow & \upelb \\
      & 1 & 2 & 3 & 4 & 5 & 6}
  \end{array}
\end{displaymath}
\caption{\label{fig:PD} The $15$ elements of $\PD(146235)$.}
\end{figure}

Let $\PD(w)$ denote the set of pipe dreams of shape $w$. When $w \in \mathcal{S}_{\infty}$ fixes $i$ for all $i \geq \ell$, we omit the sea of waves above the antidiagonal connecting $(0,\ell)$ with $(\ell,0)$. For example, the pipe dreams of shape $146235 \in \mathcal{S}_{6}$ are given Figure~\ref{fig:PD}.

To each pipe dream $P$ we associate the weak composition $\wt(P)$ whose $i$th component is equal to the number of crosses in the $i$th row of $P$. For example, the weights of the first column of pipe dreams in Figure~\ref{fig:PD} are $(3,2,0,0,0),(3,0,2,0,0),(1,2,2,0,0)$.

\begin{definition}[\cite{BB93}]
  For $w$ a permutation with no descents at or beyond $n$, the \emph{Schubert polynomial} $\sch_{w} = \sch_{w}(x_1,\ldots,x_n)$ is given by
  \begin{equation}
    \sch_{w} = \sum_{P \in \PD(w)} x^{\wt(P)},
    \label{e:schubert}
  \end{equation}
  where $x^{a}$ is the monomial $x_1^{a_1} \cdots x_n^{a_n}$.
  \label{def:schubert}
\end{definition}

For example, from Figure~\ref{fig:PD} we can compute
\begin{eqnarray}
  \sch_{(146235)} & = &  x_1^3 x_2^2  + x_1^3 x_3^2 +  x_1^2 x_2^3  +   2 x_1^2 x_2^2 x_3  +   2 x_1^2 x_2 x_3^2  +   x_1^2 x_3^3 \\\label{e:sch32_x}
  & & +  x_1^3 x_2 x_3 + x_1 x_2^3 x_3  +   2 x_1 x_2^2 x_3^2  +   x_1 x_2 x_3^3  +   x_2^3 x_3^2  +   x_2^2 x_3^3. \nonumber
\end{eqnarray}

Comparing \eqref{e:sch32_x} with \eqref{e:s32_x}, we see that $\sch_{(146235)} = s_{(3,2)}(x_1,x_2,x_3)$. Indeed, this is not a coincidence. For $\lambda$ a partition of length at most $n$, let $v(\lambda,n)$ be the permutation with a unique descent at position $n$ and values $i + \lambda_{n+1-i}$ at $1 \leq i \leq n$. For example, $v((3,2),3) = 146235$. This map is invertible on permutations with at most one descent, and we call such permutations \emph{Grassmannian}. 

\begin{theorem}[\cite{LS82}]
  For $\lambda$ a partition of length at most $n$, we have
  \begin{equation}
    \sch_{v(\lambda,n)} = s_{\lambda}(x_1,\ldots,x_n).
  \end{equation}
\end{theorem}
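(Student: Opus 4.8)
The plan is to establish a weight-preserving bijection between $\PD(v(\lambda,n))$ and $\SSYT_n(\lambda)$, from which the identity follows immediately by comparing the generating-function definitions \eqref{e:schubert} and \eqref{e:schur}. First I would note that the Grassmannian permutation $v(\lambda,n)$ has its unique descent at position $n$, and its Lehmer code is $L(v(\lambda,n)) = (\lambda_n, \lambda_{n-1}, \ldots, \lambda_1, 0, 0, \ldots)$ when read appropriately; in any case the total number of inversions is $|\lambda|$, so every pipe dream in $\PD(v(\lambda,n))$ has exactly $|\lambda|$ crosses, matching $|\lambda|$ entries in a tableau of shape $\lambda$.

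The core of the argument is the explicit bijection. Given $P \in \PD(v(\lambda,n))$, a cross in row $i$ and column $j$ should be recorded as an entry in the tableau; the standard recipe (due to Billey--Jockusch--Stanley, reinterpreted via Kohnert or Lenart moves for the Grassmannian case) is that the crosses in row $i$ of $P$, read appropriately, give the cells of row $(n+1-i)$ or an analogous row of the Young diagram, with the row index $i$ itself becoming the tableau entry. One checks that for a Grassmannian permutation with descent at $n$, the pipe dream crosses are confined to the first $n$ rows, the number of crosses in row $i$ is at most $\lambda$-dependent, and the reducedness/non-crossing condition translates exactly into the semistandard conditions (weak increase along rows, strict increase up columns). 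Under this correspondence $\wt(P)_i$, the number of crosses in row $i$, equals the number of times the entry $i$ appears in the associated tableau, i.e. $\wt(P) = \wt(T)$. Summing $x^{\wt(P)} = x^{\wt(T)}$ over the two sides then gives $\sch_{v(\lambda,n)} = s_\lambda(x_1,\ldots,x_n)$.

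Alternatively, and perhaps more cleanly for a paper at this level, I would invoke the divided-difference characterization: by the Lascoux--Sch\"utzenberger definition, $\sch_w$ is obtained from $\sch_{w_0} = x_1^{n-1}x_2^{n-2}\cdots x_{n-1}$ by applying divided difference operators along a reduced word for $w_0 w^{-1}$, and it is a standard fact that for a Grassmannian permutation the resulting polynomial is $w$-invariant in the appropriate block of variables and agrees with the Jacobi--Trudi / bialternant formula for $s_\lambda$. One verifies that $v(\lambda,n)$ is precisely the permutation whose code matches the partition $\lambda$ read backwards through position $n$, and that no descents occur beyond $n$, so $\sch_{v(\lambda,n)} \in \Lambda_n \cap \Poly_n^{|\lambda|}$; then one identifies it with $s_\lambda$ by checking the leading monomial or by the known stability $\sch_{v(\lambda,n)} \to s_\lambda$.

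The main obstacle is the careful bookkeeping in the combinatorial bijection: one must verify that the crosses of a reduced pipe dream of shape $v(\lambda,n)$ occupy exactly a region that, after the row-index-to-entry relabeling and a reflection to pass from French pipe-dream coordinates to French tableau coordinates, yields a valid semistandard filling of $\lambda$, and conversely that every such filling arises from a unique reduced pipe dream. The non-obvious direction is showing the map is well-defined (no crosses escape the first $n$ rows, and the non-crossing condition is equivalent to semistandardness) and that it is injective; surjectivity and weight-preservation are then comparatively routine. Given the level of the exposition, I expect the authors to either cite \cite{BJS93} or \cite{Mac91} for this classical fact or to give a one-line reduction to Gessel's theorem, rather than spelling out the bijection in full.
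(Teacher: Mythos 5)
The paper gives no proof of this statement at all: it is quoted as a classical theorem of Lascoux--Sch\"utzenberger, with the pipe-dream formulation resting on the earlier references for Schubert polynomials, so there is no argument of the authors' to compare against, and your closing guess that the result would simply be cited is exactly right. Your outline does follow the standard route --- a weight-preserving bijection $\PD(v(\lambda,n)) \leftrightarrow \SSYT_n(\lambda)$ in which the number of crosses in row $i$ becomes the number of entries equal to $i$ (consistent with the paper's matching weight lists for Figures~2 and~5), with the divided-difference/stability argument as a legitimate alternative --- and your computation of the code of $v(\lambda,n)$ and of the total number of crosses is correct.

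As a self-contained proof, however, what you wrote is a plan rather than an argument, and the gap sits precisely where you locate it. The map is never actually defined: ``the crosses in row $i$, read appropriately, give the cells of row $(n+1-i)$ or an analogous row'' cannot be taken literally, since the entries equal to $i$ in a semistandard tableau need not occupy a single row (compare the tableau of weight $(1,2,2)$ in Figure~2 with the corresponding pipe dream in Figure~5), so the correspondence between cross positions and cell positions is genuinely more delicate than a row-to-row relabeling. Likewise the equivalence ``reducedness $\Leftrightarrow$ semistandardness'' is asserted, not checked, and it is the entire content of the theorem in this approach. To complete the bijective route one should either reproduce the known correspondence for Grassmannian (or, more generally, vexillary) permutations --- e.g.\ the compatible-sequence description of Billey--Jockusch--Stanley together with Edelman--Greene insertion, which identifies reduced words of $v(\lambda,n)$ with standard tableaux and compatible sequences with semistandard ones --- or cite it; the divided-difference alternative similarly needs the verification that $\sch_{v(\lambda,n)}$ is symmetric in $x_1,\ldots,x_n$ and satisfies the bialternant/Jacobi--Trudi characterization, which you again leave as ``a standard fact.'' None of this is wrong, but as written it defers exactly the steps that constitute the proof, which is presumably why the paper, like you in your last sentence, treats the statement as a citation rather than proving it.
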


That is to say, Schur polynomials represent the Schubert classes of the Grassmannian manifold. In light of this, one may regard Schubert polynomials as a lifting of Schur polynomials from $\Lambda_n$ to $\Poly_n$. Since Schur polynomials are well understood in comparison to Schubert polynomials, our aim is to lift tools and techniques from symmetric polynomials in order to gain better insights into Schubert polynomials. Of course, since Schubert polynomials are not symmetric, the challenge lies in choosing what to lift and how to lift it.

%%%%%%%%%%%%%%%%%%%%%%%%%%%%%%%%%%%%%%%%%%%%%%%%%%%%%%%%%%%%%%%%
\subsection{Slide polynomials}
%%%%%%%%%%%%%%%%%%%%%%%%%%%%%%%%%%%%%%%%%%%%%%%%%%%%%%%%%%%%%%%%
\label{sec:schub-slide}

To define our new bases for polynomials that lift quasisymmetric polynomials, we begin with a few operations on weak compositions. For $a$ a weak composition, let $\flatten(a)$, called the \emph{flattening} of $a$, be the (strong) composition obtained by removing all $0$ terms. Given weak compositions $a,b$ of length $n$, we say that $b$ \emph{dominates} $a$, denoted by $b \geq a$, if
\begin{equation}
  b_1 + \cdots + b_i \geq a_1 + \cdots + a_i
\end{equation}
for all $i=1,\ldots,n$. Note that this extends the usual dominance order on partitions.

\begin{definition}
  For a weak composition $a$ of length $n$, define the \emph{monomial slide polynomial} $\Mono_{a} = \Mono_{a}(x_1,\ldots,x_n)$ by
  \begin{equation}
    \Mono_{a} = \sum_{\substack{b \geq a \\ \flatten(b) = \flatten(a)}} x_1^{b_1} \cdots x_n^{b_n},
    \label{e:monomial-shift}
  \end{equation}
  where the sum is over all compositions $b$ obtained by shifting the entries of $a$ to the left while preserving their relative order.
  \label{def:monomial}
\end{definition}

For example, we have
\begin{equation}
  \Mono_{(0,2,0,3)} = x_1^2 x_2^3 + x_1^2 x_3^3 + x_1^2 x_4^3 + x_2^2 x_3^3 + x_2^2 x_4^3.
\end{equation}
Note that this polynomial is \emph{not} quasisymmetric; it is missing the term $x_3^2 x_4^3$. 

We say that a weak composition $a$ is \emph{quasi-flat} if the nonzero terms occur in an interval. For example, $(0,2,0,3)$ is not quasi-flat, but $(0,0,2,3)$ is.

\begin{lemma}
  For a weak composition $a$ of length $n$, let $k$ be the index of the last nonzero term of $a$. Then $\Mono_a$ is quasisymmetric in $x_1,\ldots,x_k$ if and only if $a$ is quasi-flat. Moreover, in this case, we have $\Mono_a = M_{\flatten(a)}(x_1,\ldots,x_k)$.
  \label{lem:lift-mono}
\end{lemma}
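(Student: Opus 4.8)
The plan is to reduce the statement to an explicit description of the monomials of $\Mono_a$. Write $\alpha = \flatten(a) = (\alpha_1,\ldots,\alpha_\ell)$ and let $i_1 < i_2 < \cdots < i_\ell = k$ be the positions of the nonzero entries of $a$. A weak composition $b$ of length $n$ with $\flatten(b) = \alpha$ is specified by the indices $1 \le j_1 < \cdots < j_\ell \le n$ of its nonzero entries, and I would first check that $b \geq a$ is equivalent to $j_m \le i_m$ for all $m$. This is a short partial-sum computation: $a_1 + \cdots + a_r$ and $b_1 + \cdots + b_r$ are both initial sums of $\alpha$, namely the sums of the first $\#\{m : i_m \le r\}$ and first $\#\{m : j_m \le r\}$ parts respectively, and since the parts of $\alpha$ are strictly positive, dominance for all $r$ is equivalent to $\#\{m : j_m \le r\} \ge \#\{m : i_m \le r\}$ for all $r$, which is in turn equivalent to $j_m \le i_m$ for all $m$. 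Hence, from \eqref{e:monomial-shift},
\[
  \Mono_a \;=\; \sum_{\substack{1 \le j_1 < \cdots < j_\ell \le k \\ j_m \le i_m \text{ for all } m}} x_{j_1}^{\alpha_1} \cdots x_{j_\ell}^{\alpha_\ell};
\]
in particular $\Mono_a$ involves only $x_1,\ldots,x_k$, which is why quasisymmetry in $x_1,\ldots,x_k$ is the right question, and distinct tuples $(j_1,\ldots,j_\ell)$ give distinct monomials (the support of the exponent vector is exactly $\{j_1,\ldots,j_\ell\}$), so each monomial of $\Mono_a$ occurs with coefficient $1$.

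For the ``if'' direction, suppose $a$ is quasi-flat, so the nonzero positions are consecutive and $i_m = k-\ell+m$. For any $1 \le j_1 < \cdots < j_\ell \le k$, the entries $j_{m+1},\ldots,j_\ell$ are $\ell-m$ distinct integers in the interval $(j_m, k]$, forcing $j_m \le k-(\ell-m) = i_m$; thus the constraint $j_m \le i_m$ is automatic and the displayed sum is exactly $M_\alpha(x_1,\ldots,x_k)$ by \eqref{e:monomial}, which is quasisymmetric in $x_1,\ldots,x_k$.

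For the ``only if'' direction I would prove the contrapositive: if $a$ is not quasi-flat then, since $i_\ell = k$, the positions $i_1 < \cdots < i_\ell$ fail to be consecutive and so $i_1 \le k-\ell$. I then compare the coefficients in $\Mono_a$ of $x_1^{\alpha_1}\cdots x_\ell^{\alpha_\ell}$ and of $x_{k-\ell+1}^{\alpha_1}\cdots x_k^{\alpha_\ell}$: the former occurs (take $j_m = m$, legal since the $i_m$ are distinct positive integers, hence $i_m \ge m$) with coefficient $1$, while the latter would require $j_1 = k-\ell+1 \le i_1$, which fails, so its coefficient is $0$. As $(1,\ldots,\ell)$ and $(k-\ell+1,\ldots,k)$ are both strictly increasing sequences in $[k]$, this contradicts \eqref{e:quasisymmetric} for the composition $\alpha$, so $\Mono_a$ is not quasisymmetric in $x_1,\ldots,x_k$. (We may assume $a \ne 0$, the case $a = 0$ being trivial.)

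The only step needing real care is the equivalence in the first paragraph between the dominance order on the compositions $b$ and the coordinatewise inequalities $j_m \le i_m$ on their support positions; once that is in hand, both directions are immediate comparisons of explicit monomial sums. I would isolate that equivalence and verify it by the partial-sum argument sketched above.
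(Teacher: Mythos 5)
Your proof is correct and takes essentially the same approach as the paper: you characterize exactly which shifted monomials occur in $\Mono_a$ (via the dominance--support equivalence $j_m \le i_m$), deduce $\Mono_a = M_{\flatten(a)}(x_1,\ldots,x_k)$ when $a$ is quasi-flat, and otherwise exhibit one monomial of type $\flatten(a)$ that appears and one that does not, violating \eqref{e:quasisymmetric}. The only cosmetic difference is your choice of witness monomials (left-packed versus right-packed) where the paper shifts a single block one step to the right, and your more explicit justification of the dominance condition, which the paper leaves implicit.
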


\begin{proof}
Suppose $a$ is not quasi-flat, i.e., $a_{i-1}> a_i = 0$ for some $i \leq k$. Then the term $x_1^{a_1} \ldots x_{i-2}^{a_{i-2}}x_{i-1}^{a_{i-1}}x_{i+1}^{a_{i+1}}\ldots x_k^{a_k}$ appears in $\Mono_a$ but the term $x_1^{a_1} \ldots x_{i-2}^{a_{i-2}}x_i^{a_{i-1}}x_{i+1}^{a_{i+1}}\ldots x_k^{a_k}$ does not, hence $\Mono_a$ is not quasisymmetric in $x_1,\ldots,x_{k}$.
  
  Conversely, suppose $a$ is quasi-flat. Then every $b$ with last nonzero entry at or before $k$ for which $\flatten(b)=\flatten(a)$ dominates $a$, so $\Mono_a = M_{\flatten(a)}(x_1,\ldots,x_k)$. 
\end{proof}

For example, $\Mono_{(0,0,2,3)} = x_1^2 x_2^3 + x_1^2 x_3^3 + x_1^2 x_4^3 + x_2^2 x_3^3 + x_2^2 x_4^3 + x_3^2 x_4^2 = M_{(2,3)}$. 

Not only do the monomial slide polynomials lift the monomial quasisymmetric polynomials from $\QSym_n^k$ to $\Poly_n^k$, but they form a $\mathbb{Z}$-basis for $\Poly_n^k$ as well.

\begin{theorem}
  The monomial slide polynomials $\{ \Mono_{a} \mid  a=(a_1,\ldots,a_n) \mbox{ and } \sum a_i=k \}$ form a $\mathbb{Z}$-basis for $\Poly_n^k$.
  \label{thm:basis-mono}
\end{theorem}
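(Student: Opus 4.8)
The plan is to exhibit a unitriangular change of basis between the monomial slide polynomials $\{\Mono_a\}$ and the standard monomial basis $\{x^a\}$ of $\Poly_n^k$, with respect to the dominance order on weak compositions of length $n$ and size $k$. Since $\{x^a : \sum a_i = k\}$ is already a $\mathbb{Z}$-basis for $\Poly_n^k$, unitriangularity will immediately give that $\{\Mono_a\}$ is also a $\mathbb{Z}$-basis.

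First I would record the leading-term structure directly from Definition~\ref{def:monomial}. By \eqref{e:monomial-shift}, $\Mono_a = \sum_{b} x^b$ where the sum is over weak compositions $b$ of length $n$ with $\flatten(b) = \flatten(a)$ and $b \geq a$ in dominance order. In particular $x^a$ itself occurs (take $b = a$), and every other $b$ appearing satisfies $b \geq a$ with $b \neq a$, i.e. $b$ strictly dominates $a$. Thus, ordering the weak compositions of size $k$ by any linear extension of dominance order, the matrix expressing $\{\Mono_a\}$ in terms of $\{x^a\}$ is upper unitriangular (ones on the diagonal, and off-diagonal entries supported strictly above in dominance).

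Next I would note that dominance order on weak compositions of fixed length $n$ and fixed size $k$ has a maximal element (namely $(k,0,\ldots,0)$) and, more importantly, no infinite ascending chains — indeed it is a finite poset — so any linear extension is a genuine total order on a finite index set and the unitriangular matrix is a finite, invertible-over-$\mathbb{Z}$ matrix (its determinant is $1$). Inverting it expresses each $x^a$ as an integer combination of $\{\Mono_b : b \geq a\}$, which shows $\{\Mono_a\}$ spans $\Poly_n^k$ over $\mathbb{Z}$; combined with the fact that $\{\Mono_a\}$ and $\{x^a\}$ have the same (finite) cardinality, namely the number of weak compositions of length $n$ and size $k$, linear independence follows as well. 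This completes the proof.

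The only point requiring any care — and the step I would flag as the main obstacle, though it is minor — is verifying precisely that the \emph{only} monomial appearing in $\Mono_a$ with exponent vector $\leq a$ (equivalently, not strictly dominating $a$) is $x^a$ itself; this is exactly the content of the constraint $b \geq a$ in \eqref{e:monomial-shift}, so it is immediate from the definition, but it is what makes the triangularity work. Everything else is the standard "unitriangular change of basis" argument. I would not expect to need Lemma~\ref{lem:lift-mono} here; that lemma concerns quasisymmetry, which is irrelevant to the basis statement.
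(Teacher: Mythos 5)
Your proposal is correct and follows essentially the same route as the paper: both establish that $\Mono_a = x^a + \sum_{b>a} c_{a,b}\, x^b$ with the sum over compositions strictly dominating $a$, and then invoke unitriangularity (the paper via reverse lexicographic order refining dominance, you via an arbitrary linear extension of the finite dominance poset) to transfer the basis property from $\{x^a\}$ to $\{\Mono_a\}$. No gaps; your observation that Lemma~\ref{lem:lift-mono} is not needed is also consistent with the paper's argument.
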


\begin{proof}
  Using dominance order on compositions, there exist nonnegative integers $c_{a,b}$ such that
  \begin{displaymath}
    \Mono_a = x^a + \sum_{b>a} c_{a,b} x^b.
  \end{displaymath}
  In particular, since dominance is a suborder of reverse lexicographic order, the monomial slide polynomials $\{\Mono_a\}$ are upper uni-triangular with respect to the monomials $\{x^a\}$. Since the latter clearly form a $\mathbb{Z}$-basis for $\Poly_n^k$, so do the former.
\end{proof}

We now lift the fundamental quasisymmetric basis in a similar manner.

\begin{definition}
  For a weak composition $a$ of length $n$, define the \emph{fundamental slide polynomial} $\Fund_{a} = \Fund_{a}(x_1,\ldots,x_n)$ by
  \begin{equation}
    \Fund_{a} = \sum_{\substack{b \geq a \\ \flatten(b) \ \mathrm{refines} \ \flatten(a)}} x_1^{b_1} \cdots x_n^{b_n},
    \label{e:fundamental-shift}
  \end{equation}
  where the sum is over all compositions $b$ obtained by shifting or splitting the entries of $a$ to the left while preserving their relative order.
  \label{def:fundamental-shift}
\end{definition}

For example, we have
\begin{eqnarray}
  \Fund_{(0,2,0,3)} & = & x_1^2 x_2^3 + x_1^2 x_3^3 + x_1^2 x_4^3 + x_2^2 x_3^3 + x_2^2 x_4^3 + x_1^2 x_2 x_3^2 + x_1^2 x_2 x_4^2  \label{e:F_x}\\
  & & + x_1^2 x_3 x_4^2 + x_2^2 x_3 x_4^2 + x_1^2 x_2^2 x_3 + x_1^2 x_2^2 x_4 + x_1^2 x_3^2 x_4 + x_2^2 x_3^2 x_4 \nonumber\\
  & & + x_1 x_2 x_3^3 + x_1 x_2 x_4^3 + x_1 x_2 x_3 x_4^2 + x_1 x_2 x_3^2 x_4 + x_1^2 x_2 x_3 x_4. \nonumber
\end{eqnarray}

As with their quasisymmetric counter-parts, it is more convenient to expand the fundamental slide polynomials in terms of the monomial slide basis. To do this, we require a further refinement of dominance. Given weak compositions $a,b$ of length $n$, we say that $b$ \emph{strongly dominates} $a$, denoted by $b \sgeq a$, if $b \geq a$ and for all $c \geq a$ such that $\flatten(c)=\flatten(b)$, we have $c \geq b$ as well. This definition makes the following statement true.

\begin{proposition}
  For $a$ a weak composition of length $n$, we have
  \begin{equation}
    \Fund_{a} = \sum_{\substack{b \sgeq a \\ \flatten(b) \ \mathrm{refines} \ \flatten(a)}} \Mono_{b}.
  \end{equation}
  \label{prop:F-to-M}
\end{proposition}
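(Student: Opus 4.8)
The plan is to unfold both sides into monomials and match them, using the combinatorial meaning of strong dominance as the bridge. Unfolding Definitions~\ref{def:monomial} and \ref{def:fundamental-shift}, the right-hand side is $\sum_{b}\sum_{c} x^c$, where $b$ ranges over weak compositions of length $n$ with $b\sgeq a$ and $\flatten(b)$ refining $\flatten(a)$, and $c$ ranges over weak compositions with $\flatten(c)=\flatten(b)$ and $c\geq b$; I want this to equal $\Fund_a=\sum_{d} x^d$ over weak compositions $d$ of length $n$ with $d\geq a$ and $\flatten(d)$ refining $\flatten(a)$. The right way to organize the comparison is to fix a strong composition $\gamma$ refining $\alpha:=\flatten(a)$ and collect the monomials $x^c$ with $\flatten(c)=\gamma$ on each side. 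Writing $S_\gamma$ for the set of weak compositions $c$ of length $n$ with $\flatten(c)=\gamma$ and $c\geq a$, the monomials with that flattening on the right are exactly $\{x^c: c\in S_\gamma\}$, while on the left they are $\{x^c: \flatten(c)=\gamma,\ c\geq b\}$ for the unique (if any) $b\sgeq a$ with $\flatten(b)=\gamma$. Note that, directly from the definition, $b\sgeq a$ says precisely that $b$ is the $\geq$-minimum of $S_{\flatten(b)}$ (it is a member since $b\geq a$, and a lower bound by the quantified condition); conversely the $\geq$-minimum of a nonempty $S_\gamma$, if it exists, strongly dominates $a$ and has flattening $\gamma$.

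So the technical heart is to show that $S_\gamma$, when nonempty, has a $\geq$-minimum. I would prove this via the following description of partial sums. Set $\Gamma_j=\gamma_1+\cdots+\gamma_j$. A sequence $(s_0,\dots,s_n)$ with $s_0=0$ is the sequence of partial sums $s_i=c_1+\cdots+c_i$ of some weak composition $c$ of length $n$ with $\flatten(c)=\gamma$ if and only if $s_i=\Gamma_{r(i)}$ for a weakly increasing map $r\colon\{0,\dots,n\}\to\{0,\dots,\ell(\gamma)\}$ with $r(0)=0$, $r(n)=\ell(\gamma)$, and unit steps $r(i)-r(i-1)\in\{0,1\}$; this is immediate since $c_i$ is $0$ precisely when position $i$ is not one of the nonzero positions of $c$, and equals $\gamma_t$ at the $t$-th nonzero position. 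Because the $\Gamma_j$ are strictly increasing, the coordinatewise minimum over $c\in S_\gamma$ of these partial-sum sequences is $i\mapsto\Gamma_{\min_c r_c(i)}$, and the pointwise minimum of maps of the above type is again of that type (weak monotonicity and endpoints are clear, and $\min_c r_c(i)-\min_c r_c(i-1)\le 1$). Hence this minimum is the partial-sum sequence of a genuine weak composition $b_\gamma$ of length $n$ with $\flatten(b_\gamma)=\gamma$; it dominates $a$ because every $c\in S_\gamma$ does, so $b_\gamma\in S_\gamma$, and it lies below every element of $S_\gamma$ by construction. This existence-of-a-minimum step is the main obstacle; everything else is bookkeeping.

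With $b_\gamma$ in hand the proof finishes quickly. For each $\gamma$ refining $\alpha$ with $S_\gamma\neq\emptyset$, the set $\{c:\flatten(c)=\gamma,\ c\geq b_\gamma\}$ equals $S_\gamma$ (the inclusion $\subseteq$ holds since $c\geq b_\gamma\geq a$, and $\supseteq$ is minimality of $b_\gamma$), so Definition~\ref{def:monomial} gives $\Mono_{b_\gamma}=\sum_{c\in S_\gamma} x^c$. Grouping $\Fund_a=\sum_{\gamma\text{ refines }\alpha}\sum_{c\in S_\gamma} x^c$ by flattening, with the $\gamma$ having $S_\gamma=\emptyset$ contributing nothing, yields $\Fund_a=\sum_{\gamma}\Mono_{b_\gamma}$ where $\gamma$ runs over compositions refining $\alpha$ with $S_\gamma\neq\emptyset$. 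Finally, the characterization of $\sgeq$ from the first paragraph identifies the index set $\{b_\gamma\}$ with $\{b: b\sgeq a,\ \flatten(b)\text{ refines }\flatten(a)\}$, giving the claimed formula. One point worth flagging in the write-up: $b\sgeq a$ by itself does not force $\flatten(b)$ to refine $\flatten(a)$ (for instance $(2,1,0)\sgeq(1,2,0)$), which is exactly why both conditions appear in the statement — the refinement condition is precisely what pairs the $\gamma$'s occurring in $\Fund_a$ with their minima $b_\gamma$.
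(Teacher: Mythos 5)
Your argument is correct. The paper itself gives no proof of Proposition~\ref{prop:F-to-M}: it simply remarks that the definition of strong dominance ``makes the following statement true,'' i.e.\ $\sgeq$ is engineered exactly so that the $\Mono$-expansion of $\Fund_a$ is indexed by the dominance-least element of each flattening class over $a$. Your write-up supplies precisely the content that remark leaves implicit, and the one genuinely nontrivial ingredient is your existence lemma: for each strong composition $\gamma$ refining $\flatten(a)$ with $S_\gamma=\{c:\flatten(c)=\gamma,\ c\geq a\}\neq\emptyset$, the class $S_\gamma$ has a $\geq$-minimum. Your proof of this via partial sums --- encoding members of $S_\gamma$ as unit-step weakly increasing functions $r$ with $s_i=\Gamma_{r(i)}$, and checking that the pointwise minimum is again of this type, still dominates $a$, and hence lies in $S_\gamma$ below everything else --- is sound, and the remaining steps (that $b\sgeq a$ together with $\flatten(b)=\gamma$ characterizes exactly this minimum $b_\gamma$, and that $\Mono_{b_\gamma}$ then collects exactly the monomials of $\Fund_a$ with flattening $\gamma$, with empty classes contributing nothing) are correct bookkeeping. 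Your closing example $(2,1,0)\sgeq(1,2,0)$ is also right, and it is a worthwhile caution that the refinement condition in the index set is not implied by strong dominance alone. One small slip: in the sentence introducing $S_\gamma$ you interchanged ``right'' and ``left'' --- the monomials of $\Fund_a$ with flattening $\gamma$ are exactly $\{x^c: c\in S_\gamma\}$, while those contributed by the $\Mono$-sum are $\{x^c:\flatten(c)=\gamma,\ c\geq b\}$ for the unique such $b$ --- but the later parts of your proof use the correct attribution, so nothing propagates.
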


For example, \eqref{e:F_x} can be written more compactly as
\begin{eqnarray}
  \Fund_{(0,2,0,3)} & = & \Mono_{(0,2,0,3)} + \Mono_{(0,2,1,2)} + \Mono_{(0,2,2,1)} + \Mono_{(1,1,0,3)} \label{e:F_M} \\
  & & + \Mono_{(1,1,1,2)} + \Mono_{(1,1,2,1)} + \Mono_{(2,1,1,1)}. \nonumber  
\end{eqnarray}

As with the monomial slide basis, we have the following characterization of when a fundamental slide polynomial is quasisymmetric.

\begin{lemma}
  For a weak composition $a$, let $k$ be the index of the last nonzero term of $a$. Then $\Fund_a$ is quasisymmetric in $x_1,\ldots,x_k$ if and only if $a$ is quasi-flat. Moreover, in this case, we have $\Fund_a = F_{\flatten(a)}(x_1,\ldots,x_k)$.
  \label{lem:lift-fund}
\end{lemma}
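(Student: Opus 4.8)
The plan is to mirror the proof of Lemma~\ref{lem:lift-mono}, since Proposition~\ref{prop:F-to-M} lets us reduce the fundamental statement to the monomial one. Recall that $\Fund_a = \sum_{b \sgeq a,\ \flatten(b)\,\mathrm{refines}\,\flatten(a)} \Mono_b$. First I would establish the easy direction: if $a$ is \emph{not} quasi-flat, then $a_{i-1} > a_i = 0$ for some $i \le k$, and exactly as in Lemma~\ref{lem:lift-mono} the monomial $x_1^{a_1}\cdots x_{i-2}^{a_{i-2}}x_{i-1}^{a_{i-1}}x_{i+1}^{a_{i+1}}\cdots x_k^{a_k}$ appears in $\Fund_a$ (it is a term of $\Mono_a$, which is one of the summands since $a \sgeq a$ trivially and $\flatten(a)$ refines itself), while the monomial obtained by moving the exponent $a_{i-1}$ from variable $x_{i-1}$ to $x_i$ does not appear—no $b$ in the defining sum of $\Fund_a$ can produce it, since every $b$ there dominates $a$ and agrees with $a$ past position $k$, and the candidate monomial has strictly smaller partial sum at index $i-1$. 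Hence $\Fund_a$ is not quasisymmetric in $x_1,\dots,x_k$.

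For the converse, suppose $a$ is quasi-flat, with nonzero entries occupying positions $j, j+1, \dots, k$. I would show directly that $\Fund_a = F_{\flatten(a)}(x_1,\dots,x_k)$, from which quasisymmetry in $x_1,\dots,x_k$ is immediate. By Proposition~\ref{prop:F-to-M} it suffices to check that the compositions $b$ of length $n$ satisfying $b \sgeq a$ and $\flatten(b)$ refines $\flatten(a)$ are precisely those weak compositions of length $k$ (i.e.\ with $b_{k+1} = \cdots = b_n = 0$) with $\flatten(b)$ refining $\flatten(a)$, and then invoke Lemma~\ref{lem:lift-mono} termwise: each such $\Mono_b$ with last nonzero entry at or before $k$ equals $M_{\flatten(b)}(x_1,\dots,x_k)$, and summing over all refinements of $\flatten(a)$ realizable within $k$ variables gives exactly $F_{\flatten(a)}(x_1,\dots,x_k)$ by Definition~\ref{def:fundamental}. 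The key sub-claim is that when $a$ is quasi-flat, \emph{every} weak composition $c$ of length $n$ with $\flatten(c)$ refining $\flatten(a)$ and last nonzero entry at position $\le k$ automatically satisfies $c \ge a$, hence also $c \sgeq a$ after the standard argument that strong dominance follows from dominance among compositions with a fixed flattening: indeed, since the nonzero parts of $a$ are pushed as far right as possible within $[j,k]$, any rearrangement-or-splitting of $\flatten(a)$ placed within $x_1,\dots,x_k$ has each partial sum at least as large as that of $a$.

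The main obstacle I anticipate is verifying the strong-dominance bookkeeping cleanly: one must confirm that the set $\{b : b \sgeq a,\ \flatten(b)\text{ refines }\flatten(a)\}$ does not accidentally include compositions with nonzero entries beyond position $k$ (it cannot, because such a $b$ would have larger total weight in $x_1,\dots,x_k$ being impossible while still dominating—actually the constraint is that $b$ and $a$ have the same size and $b$ dominates $a$, forcing $b$'s mass to sit weakly left of $a$'s, hence within the first $k$ positions), and conversely that no length-$\le k$ refinement is excluded by the strong-dominance refinement of plain dominance. This is the same phenomenon already handled implicitly in the displayed example \eqref{e:F_M}, so I would phrase it as: ``exactly as in the proof of Lemma~\ref{lem:lift-mono}, when $a$ is quasi-flat every $b$ of length $n$ with last nonzero entry at or before $k$ and $\flatten(b)$ refining $\flatten(a)$ satisfies $b \sgeq a$, whence $\Fund_a = \sum_{\flatten(b)\,\mathrm{refines}\,\flatten(a)} M_{\flatten(b)}(x_1,\dots,x_k) = F_{\flatten(a)}(x_1,\dots,x_k)$ by Lemma~\ref{lem:lift-mono} and Definition~\ref{def:fundamental}.'' Everything else is routine and parallels the monomial case verbatim.
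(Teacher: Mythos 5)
The ``only if'' direction of your argument is fine and matches the paper. The gap is in the converse. Your reduction via Proposition~\ref{prop:F-to-M} rests on the claim that the index set $\{b : b \sgeq a,\ \flatten(b)\text{ refines }\flatten(a)\}$ consists of \emph{all} weak compositions supported in the first $k$ positions whose flattening refines $\flatten(a)$, justified by the assertion that ``strong dominance follows from dominance among compositions with a fixed flattening.'' That assertion is false: by definition, among all $c \geq a$ with a given flattening, only the \emph{minimal} one strongly dominates $a$. For example, with $a=(0,0,2,3)$ and refinement $(1,1,3)$, both $(1,1,3,0)$ and $(0,1,1,3)$ dominate $a$, but only $(0,1,1,3)$ satisfies $b \sgeq a$; compare the displayed expansion \eqref{e:F_M}, which has exactly one $\Mono$-term per refinement. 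With your claimed index set the right-hand side would overcount (several $\Mono_b$ per refinement), so the identity you are aiming for would literally fail. The termwise use of Lemma~\ref{lem:lift-mono} is also misapplied: that lemma requires $b$ to be quasi-flat, and it produces $M_{\flatten(b)}$ in the variables up to the last nonzero entry of $b$, so e.g.\ $\Mono_{(1,1,0,3)}$ is not any $M_\beta$ at all, and $\Mono_{(1,1,3,0)} = M_{(1,1,3)}(x_1,x_2,x_3)$, not $M_{(1,1,3)}(x_1,\ldots,x_4)$.

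Two correct repairs are available. The paper's route keeps Proposition~\ref{prop:F-to-M} but observes that for each refinement $\beta$ of $\flatten(a)$ with $\ell(\beta)\leq k$, the unique $b$ with $\flatten(b)=\beta$ and $b \sgeq a$ is the right-justified placement ending at position $k$ (it dominates $a$ precisely because $a$ is quasi-flat, and it is minimal among dominating placements); this $b$ is itself quasi-flat with last nonzero entry at $k$, so Lemma~\ref{lem:lift-mono} gives $\Mono_b = M_\beta(x_1,\ldots,x_k)$, and summing over $\beta$ yields $F_{\flatten(a)}(x_1,\ldots,x_k)$ by \eqref{e:fundamental}. Alternatively, bypass Proposition~\ref{prop:F-to-M} entirely and argue at the level of monomials: your own (correct) sub-claim that every composition supported in positions $1,\ldots,k$ whose flattening refines $\flatten(a)$ dominates $a$, together with your (correct) observation that dominance plus equal size forces support in the first $k$ positions, shows that the monomials of $\Fund_a$ are exactly those of $F_{\flatten(a)}(x_1,\ldots,x_k)$. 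Either fix closes the gap; as written, the strong-dominance bookkeeping you flagged as the main obstacle is exactly where the proof breaks.
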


\begin{proof}
  If $a$ is not quasi-flat, then the same term highlighted in the proof of Lemma~\ref{lem:lift-mono} that is missing from $\Mono_a$ to make it quasisymmetric is missing from $\Fund_a$ as well, and hence it is not quasisymmetric in $x_1,\ldots,x_k$.

  As in the monomial case, if $a$ is quasi-flat, then any $b$ with last nonzero entry at or before $k$ for which $\flatten(b)$ refines $\flatten(a)$ necessarily dominates $a$, and the minimal such element is also quasi-flat. Combining Lemma~\ref{lem:lift-mono} and \eqref{e:fundamental}, we have $\Fund_a = F_{\flatten(a)}(x_1,\ldots,x_k)$.
\end{proof}

\begin{theorem}
  The fundamental slide polynomials $\{ \Fund_{a} \mid  a=(a_1,\ldots,a_n) \mbox{ and } \sum a_i=k \}$ form a $\mathbb{Z}$-basis for $\Poly_n^k$.
  \label{thm:basis-fund}
\end{theorem}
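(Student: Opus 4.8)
The plan is to mimic the proof of Theorem~\ref{thm:basis-mono}, establishing an upper uni-triangularity statement for the fundamental slide polynomials, and then to leverage the fact that we already know the monomial slide polynomials form a basis. First I would observe, directly from Definition~\ref{def:fundamental-shift}, that $a$ itself is the unique term in the defining sum with $\flatten(b) = \flatten(a)$ and $b \geq a$ that is also minimal, so that
\begin{displaymath}
  \Fund_a = x^a + \sum_{b > a} d_{a,b}\, x^b
\end{displaymath}
for some nonnegative integers $d_{a,b}$, where the sum runs over weak compositions $b$ of length $n$ strictly dominating $a$; indeed every $b$ appearing in \eqref{e:fundamental-shift} other than $a$ satisfies $b > a$ because either $\flatten(b)$ strictly refines $\flatten(a)$ (forcing some partial sum to strictly increase) or $\flatten(b) = \flatten(a)$ with $b \neq a$ (again forcing a strict partial sum inequality by the shift condition). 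Since dominance order refines reverse lexicographic order, this exhibits the transition matrix from $\{\Fund_a\}$ to the monomial basis $\{x^a\}$ as upper unitriangular, and hence $\{\Fund_a\}$ is a $\mathbb{Z}$-basis for $\Poly_n^k$.

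Alternatively, and perhaps more in the spirit of Proposition~\ref{prop:F-to-M}, I would deduce the result directly from Theorem~\ref{thm:basis-mono} by showing the transition matrix from $\{\Fund_a\}$ to $\{\Mono_a\}$ is upper unitriangular with respect to dominance order. By Proposition~\ref{prop:F-to-M}, $\Fund_a = \Mono_a + \sum d_{a,b} \Mono_b$ where the sum is over $b \sgeq a$ with $\flatten(b)$ strictly refining $\flatten(a)$; every such $b$ satisfies $b \geq a$ with $b \neq a$, hence $b > a$, so the matrix is again upper unitriangular. Composing with the unitriangular change of basis from $\{\Mono_a\}$ to $\{x^a\}$ furnished by Theorem~\ref{thm:basis-mono} gives the claim; either route is essentially a one-line argument once the triangularity is in hand.

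The only real content is verifying the strictness: that no weak composition $b \neq a$ appearing in the expansion of $\Fund_a$ can equal $a$ in dominance order, i.e. that $b \geq a$ and $b \leq a$ together force $b = a$ (dominance is a partial order, so this is automatic) combined with the observation that the defining conditions genuinely produce $b \geq a$. The potential subtlety is the "refines'' versus "shift'' interplay: one must check that splitting an entry of $a$ and sliding the pieces left, while preserving relative order, never produces a composition incomparable to $a$ in dominance order but also never fixes all partial sums unless no splitting or shifting occurred. This is a routine bookkeeping check on partial sums — shifting a positive entry strictly left strictly increases some partial sum, and splitting an entry $a_j = p + q$ into $p$ (left) and $q$ (in place) also strictly increases the partial sum at the position where $p$ lands — so I expect this to be the main, though modest, obstacle; everything else follows formally from the already-established basis result for $\{\Mono_a\}$.
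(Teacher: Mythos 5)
Your proposal is correct, and your second route (unitriangularity of $\{\Fund_a\}$ over $\{\Mono_a\}$ in dominance order, then invoking Theorem~\ref{thm:basis-mono}) is exactly the argument the paper gives. Your first variant, expanding directly into monomials $\{x^a\}$, is an equivalent repackaging obtained by composing the same two unitriangular transitions, so there is nothing genuinely different here.
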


\begin{proof}
  Using dominance order on compositions, there exist nonnegative integers $c_{a,b}$ such that
  \begin{displaymath}
    \Fund_a = \Mono_a + \sum_{b>a} c_{a,b} \Mono_b.
  \end{displaymath}
  Since dominance is a suborder of reverse lexicographic order, the fundamental slide polynomials $\{\Fund_a\}$ are upper uni-triangular with respect to the monomial slide polynomials $\{\Mono_a\}$. By Theorem~\ref{thm:basis-mono}, the latter form a $\mathbb{Z}$-basis for $\Poly_n^k$, hence so do the former.
\end{proof}

%%%%%%%%%%%%%%%%%%%%%%%%%%%%%%%%%%%%%%%%%%%%%%%%%%%%%%%%%%%%%%%%
\subsection{Quasi-Yamanouchi pipe dreams}
%%%%%%%%%%%%%%%%%%%%%%%%%%%%%%%%%%%%%%%%%%%%%%%%%%%%%%%%%%%%%%%%
\label{sec:schub-quasi}

The expansion in \eqref{e:schubert}, while beautifully combinatorial, is limited in the same ways as \eqref{e:schur}. In particular, it makes calculations somewhat intractable. Parallel to Gessel's expansion for the Schur polynomial $s_{\lambda}$ in terms of fundamental quasisymmetric polynomials $F_{\alpha}$, we now give the expansion for the Schubert polynomial $\sch_{w}$ in terms of the fundamental slide basis $\Fund_{a}$. We begin by generalizing the quasi-Yamanouchi condition on semistandard Young tableaux to a condition on pipe dreams.

\begin{definition}
  A pipe dream is \emph{quasi-Yamanouchi} if, for every $i$, the westernmost $\cross$ in row $i$ is in the first column or lies weakly west of some $\cross$ in the $i+1$st row. Let $\QPD(w)$ denote the set of quasi-Yamanouchi pipe dreams of shape $w$.
  \label{def:quasi-Yam-pipe}
\end{definition}

For example, looking back at Figure~\ref{fig:PD}, there are five quasi-Yamanouchi pipe dreams of shape $146235$ as shown in Figure~\ref{fig:QPD}.

\begin{figure}[ht]
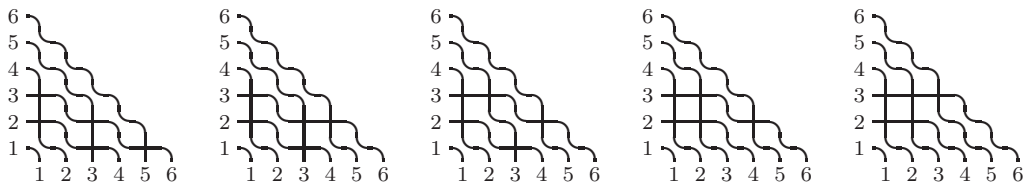

\begin{displaymath}
  \begin{array}{ccccc}
    \pipes{%
      6 & \upelb  \\
      5 & \elbow & \upelb \\
      4 & \elbow & \elbow & \upelb \\
      3 & \cross & \elbow & \elbow & \upelb \\
      2 & \cross & \elbow & \cross & \elbow & \upelb \\
      1 & \elbow & \elbow & \cross & \elbow & \cross & \upelb \\
      & 1 & 2 & 3 & 4 & 5 & 6} &
    \pipes{%
      6 & \upelb \\
      5 & \elbow & \upelb \\
      4 & \elbow & \elbow & \upelb \\
      3 & \cross & \elbow & \elbow & \upelb \\
      2 & \cross & \elbow & \cross & \cross & \upelb \\
      1 & \elbow & \elbow & \cross & \elbow & \elbow & \upelb \\
      & 1 & 2 & 3 & 4 & 5 & 6} &
    \pipes{%
      6 & \upelb \\
      5 & \elbow & \upelb \\
      4 & \elbow & \elbow & \upelb \\
      3 & \cross & \cross & \elbow & \upelb \\
      2 & \cross & \elbow & \elbow & \cross & \upelb \\
      1 & \elbow & \elbow & \cross & \elbow & \elbow & \upelb \\
      & 1 & 2 & 3 & 4 & 5 & 6} &
    \pipes{%
      6 & \upelb \\
      5 & \elbow & \upelb \\
      4 & \elbow & \elbow & \upelb \\
      3 & \cross & \cross & \elbow & \upelb \\
      2 & \cross & \cross & \elbow & \cross & \upelb \\
      1 & \elbow & \elbow & \elbow & \elbow & \elbow & \upelb \\
      & 1 & 2 & 3 & 4 & 5 & 6} &
    \pipes{%
      6 & \upelb \\
      5 & \elbow & \upelb \\
      4 & \elbow & \elbow & \upelb \\
      3 & \cross & \cross & \cross & \upelb \\
      2 & \cross & \cross & \elbow & \elbow & \upelb \\
      1 & \elbow & \elbow & \elbow & \elbow & \elbow & \upelb \\
      & 1 & 2 & 3 & 4 & 5 & 6} 
  \end{array}
\end{displaymath}
\caption{\label{fig:QPD} The $5$ elements of $\QPD(146235)$.}
\end{figure}

Analogous to the case for tableaux, we define a surjective \emph{destandardization} map from pipe dreams to quasi-Yamanouchi pipe dreams.

\begin{definition}
  For $P\in\PD(w)$, the \emph{destandardization of $P$}, denoted by $\destand(P)$, is the pipe dream constructed from $P$ as follows. For each row, say $i-1$, with no $\cross$ in the first column, if every $\cross$ in row $i-1$ lies strictly east of every $\cross$ in row $i$, then move every $\cross$ in row $i-1$ northwest one position. Repeat until no such row exists.
  \label{def:w-destand}  
\end{definition}

\begin{lemma}
  The destandardization map is well-defined and satisfies the following:
  \begin{enumerate}
  \item for $P \in \PD(w)$, $\destand(P) \in \QPD(w)$;
  \item for $P \in \PD(w)$, $\destand(P)=P$ if and only if $P \in \QPD(w)$;
  \item $\destand:\PD(w) \rightarrow \QPD(w)$ is surjective;
  \item $\destand:\PD(w) \rightarrow \QPD(w)$ is injective if and only if $w_i<w_{i+1}$ for all $i\ge w^{-1}(1)$.%, i.e., $w$ has no descent after the appearance of $1$.
  \end{enumerate}
  \label{lem:w-destand}
\end{lemma}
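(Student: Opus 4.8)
The plan is to mirror the structure of the proof of Lemma~\ref{lem:T-destand}, handling the four claims in order, with the analysis of the destandardization move on pipe dreams playing the role of the analogous move on tableaux. First I would observe that the destandardization move is well-defined in the sense that it preserves the property of being a reduced pipe dream of shape $w$: moving all crosses in row $i-1$ northwest by one position (when that row has no cross in the first column) amounts to conjugating the relevant portion of the reduced word by commuting the adjacent simple transpositions, so no two pipes are made to cross twice and the shape is unchanged. One must also check that the move strictly decreases a nonnegative statistic — for instance, the sum over rows $i$ of (row index)$\times$(number of crosses in row $i$), or equivalently the sum of the positions of all crosses — so that the process terminates; this gives well-definedness of $\destand(P)$. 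Claims (1) and (2) then follow exactly as in the tableau case: the process halts precisely when no row $i-1$ (lacking a first-column cross) has all its crosses strictly east of all crosses in row $i$, which is exactly the quasi-Yamanouchi condition of Definition~\ref{def:quasi-Yam-pipe}; and $\destand(P)=P$ iff no move is ever available iff $P$ is already quasi-Yamanouchi. Claim (3), surjectivity, is immediate from (2) since $\destand$ restricts to the identity on $\QPD(w)$.

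For claim (4), I would argue both directions. For the ``if'' direction, suppose $w_i < w_{i+1}$ for all $i \ge w^{-1}(1)$; I want to show $\destand$ is injective on $\PD(w)$. The key is to show the destandardization move can be inverted given only its output together with knowledge of $w$: this is where the hypothesis enters. One needs to recover, from a quasi-Yamanouchi pipe dream $Q$ and a target weak composition refining $\wt(Q)$ in the appropriate sense, a unique preimage — precisely paralleling the ``construct $S$ from $b$ and $T$'' paragraph in the proof of Theorem~\ref{thm:left_schur}. Concretely, I expect that the hypothesis on $w$ forces $\PD(w)$ and $\QPD(w)$ to have the same cardinality (equivalently, every reduced pipe dream is already quasi-Yamanouchi), so the surjection of (3) is a bijection; the condition $w_i < w_{i+1}$ for $i \ge w^{-1}(1)$ should be shown to guarantee that no cross ever sits in a row that can be slid, by controlling where crosses can appear relative to the single descent structure. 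For the ``only if'' direction, I would exhibit, when $w$ has a descent $w_i > w_{i+1}$ at some $i \ge w^{-1}(1)$, two distinct pipe dreams with the same destandardization — the analogue of the ``filling with $i+1$ in every cell of row $i$'' counterexample for tableaux. Here a descent at such a position means there is room to place a cross in a configuration that is not quasi-Yamanouchi, so the bottom pipe dream and its destandardization are genuinely different, yet both map to the latter.

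The main obstacle I anticipate is the precise combinatorial bookkeeping in claim (4): translating the arithmetic condition ``$w_i < w_{i+1}$ for all $i \ge w^{-1}(1)$'' into a statement about which rows of pipe dreams in $\PD(w)$ can carry crosses, and verifying that this is exactly the obstruction to having a non-quasi-Yamanouchi pipe dream. In particular, one must understand how the position $w^{-1}(1)$ (the row of the pipe that goes straight to column $1$, i.e. the lowest row that can possibly hold a first-column cross) interacts with the descent positions of $w$. I would handle this by a careful case analysis on the lowest row $i-1$ in which a slide is available, showing that such a row exists for some pipe dream exactly when $w$ violates the stated condition, and conversely that when the condition holds the quasi-Yamanouchi requirement is automatically met by every $P \in \PD(w)$, so that $\destand$ is the identity and hence trivially injective. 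The invertibility argument for the ``if'' direction should then be a short consequence, or can be bypassed entirely once one shows $\PD(w) = \QPD(w)$ under the hypothesis.
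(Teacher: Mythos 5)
Your overall architecture does match the paper's (termination of the slides gives (1)--(3); for (4), show every pipe dream is already quasi-Yamanouchi under the hypothesis and exhibit a non-quasi-Yamanouchi pipe dream otherwise), but the two steps that carry the weight of part (4) do not hold up. First, your ``only if'' witness is wrong: the bottom pipe dream $P_{L(w)}$, obtained by placing $L(w)_i$ $\cross$'s flush left in row $i$, is \emph{always} quasi-Yamanouchi, since every nonempty row has its westernmost $\cross$ in column $1$; so ``the bottom pipe dream and its destandardization'' never differ, for any $w$, and your proposed pair of distinct preimages does not exist. What the paper does instead is apply an \emph{inverse} slide to $P_{L(w)}$: at a suitable row $i$ (located at the earliest descent weakly after $m=w^{-1}(1)$) it shifts all the $\cross$'s of that row southeast one position, producing a second element of $\PD(w)$ that is genuinely not quasi-Yamanouchi; that pipe dream and its destandardization are then two distinct elements of $\PD(w)$ with the same image. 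Some construction of this kind is indispensable for the converse. Second, the ``if'' direction is precisely where an argument is required, and you leave it at ``I expect that the hypothesis forces $\PD(w)=\QPD(w)$.'' The paper proves this claim: when $w$ has no descent at or after $m$, the bottom pipe dream has a $\cross$ in column $1$ of every row $i<m$ and no $\cross$'s in any row $\geq m$, and the local moves of Bergeron--Billey connecting $\PD(w)$ preserve this configuration, so \emph{every} $P\in\PD(w)$ has a first-column $\cross$ in each nonempty row and is therefore quasi-Yamanouchi, making $\destand$ the identity. Your sketch offers no mechanism for this, and your reference to ``the single descent structure'' misreads the hypothesis, which allows arbitrarily many descents as long as they occur before position $w^{-1}(1)$.

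Two smaller points. Well-definedness is not a matter of ``conjugating by commuting adjacent simple transpositions'': the reading word is literally unchanged when a row's $\cross$'s move northwest one step; the actual issue is that the destination cells must be vacant, and the paper's proof supplies exactly this, observing that reducedness forbids a $\cross$ immediately northwest of the westernmost $\cross$ of row $i-1$, which is the only possible collision given that all of row $i$'s $\cross$'s lie strictly west. Also, your first proposed termination statistic (sum over rows of row index times number of crosses) \emph{increases} under a slide, and ``sum of the positions'' is invariant if position means $i+j$; the clean choice is the total of the column indices, which strictly decreases. Parts (1)--(3) of your argument are fine and coincide with the paper's.
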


\begin{proof}
  The reduced condition on pipe dreams, that no two pipes cross more than once, ensure that when every $\cross$ in row $i-1$ lies strictly east of every $\cross$ in row $i$, there is no $\cross$ immediately northwest of the westernmost $\cross$ of row $i-1$. Therefore the map is indeed well-defined.
  
  The process of destandardization terminates only if the quasi-Yamanouchi condition is satisfied, proving (1) and (2), and property (3) follows from (2). 

  For property (4), note that for any $w$, there is a pipe dream $P_{L(w)}$ given by placing $L(w)_i$ $\cross$'s flush left in row $i$. Suppose $w$ has no descent after $w^{-1}(1)=m$. Then $P_{L(w)}$ has $\cross$'s in row $i$, column $1$ for all $i<m$, and no $\cross$'s in row $i$ for $i\ge m$. It is then immediate from the description of the local moves connecting $\PD(w)$ (\cite{BB93}) that all pipe dreams for $w$ have $\cross$'s in row $i$, column $1$ for all $i<m$, and no $\cross$'s in row $i$ for $i\ge m$. Thus $\destand(P) = P$ for all $P\in\PD(w)$.   
Conversely, suppose $w$ has a descent after $w^{-1}(1)=m$, and let $i$ be the position of the earliest such descent. Then $P_{L(w)}$ has a $\cross$ in row $i$ but no $\cross$'s in row $i-1$. Another pipe dream for $w$ can then be obtained from $P_{L(w)}$ by shifting all $\cross$'s in row $i$ southeast one position. This pipe dream is not quasi-Yamanouchi.
\end{proof}

\begin{theorem}
  For $w$ any permutation, we have
  \begin{equation}
    \sch_{w} = \sum_{P \in \QPD(w)} \Fund_{\wt(P)}.
    \label{e:schubert-slide}
  \end{equation}
  \label{thm:schubert-slide}
\end{theorem}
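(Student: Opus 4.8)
The plan is to mirror the argument used for Theorem~\ref{thm:left_schur} in the tableau setting, replacing destandardization of semistandard Young tableaux by destandardization of pipe dreams. The key structural fact to establish is the following refinement of Lemma~\ref{lem:w-destand}: for each $P \in \QPD(w)$, the fiber $\destand^{-1}(P) \subseteq \PD(w)$ has generating function exactly $\Fund_{\wt(P)}$, that is,
\begin{equation}
  \sum_{Q \in \destand^{-1}(P)} x^{\wt(Q)} = \Fund_{\wt(P)}.
\end{equation}
Granting this, the theorem is immediate: by Lemma~\ref{lem:w-destand}(1) and (3), $\destand$ partitions $\PD(w)$ into fibers indexed by $\QPD(w)$, so summing \eqref{e:schubert} over these fibers and applying the displayed identity yields \eqref{e:schubert-slide}.

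To prove the fiber identity, I would first show that if $\destand(Q) = P$ then $\wt(Q) \sgeq \wt(P)$ (or at least $\wt(Q) \geq \wt(P)$) with $\flatten(\wt(Q))$ refining $\flatten(\wt(P))$. The inequality $\wt(Q) \geq \wt(P)$ comes from the fact that each destandardization step moves a block of crosses from row $i-1$ up into row $i$... wait, the move is northwest, so crosses move from a lower row to... re-reading Definition~\ref{def:w-destand}: crosses in row $i-1$ move northwest, i.e. up to row $i$ and left one column; this can only increase partial sums $\wt(Q)_1 + \cdots + \wt(Q)_j$ read from the top? One must be careful with the French-notation indexing convention and check that the dominance inequality points the right way; getting this bookkeeping correct is essential. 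The refinement statement on flattenings holds because destandardization may merge the cross-counts of a consecutive block of rows (when a row with crosses lies entirely east of the row above it, repeatedly), and merging consecutive parts is exactly the operation dual to refinement. So every $Q$ in the fiber contributes a monomial $x^{\wt(Q)}$ of the form appearing in the defining sum \eqref{e:fundamental-shift} for $\Fund_{\wt(P)}$.

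Conversely — and this is the crux — I must show that for every weak composition $b$ with $b \geq \wt(P)$ and $\flatten(b)$ refining $\flatten(\wt(P))$, there is a \emph{unique} $Q \in \PD(w)$ with $\wt(Q) = b$ and $\destand(Q) = P$. Uniqueness should follow, as in the tableau proof, from there being no choices in the reconstruction: given the target row-sums $b$, one reverses the destandardization by pushing the appropriate blocks of crosses southeast, and the reduced-word/pipe-dream structure (specifically the local-move description of $\PD(w)$ from \cite{BB93}, already invoked in the proof of Lemma~\ref{lem:w-destand}) forces each intermediate configuration to again be a valid pipe dream for $w$. Existence is the subtler half: one needs that the southeast-shifting process actually produces a legal reduced pipe dream (no pipe crossing twice) at every stage, which is where I expect the main obstacle to lie — the tableau argument has no analogous reducedness constraint, so I would need a lemma guaranteeing that when $P$ is quasi-Yamanouchi and $b$ dominates $\wt(P)$ compatibly, the crosses can always be slid down-and-right into rows with the prescribed counts while keeping each pipe pair crossing at most once. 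I anticipate proving this by tracking, for the westernmost cross of each affected row, that the cell immediately southeast is an elbow (the reduced condition again), exactly as in the well-definedness half of Lemma~\ref{lem:w-destand} but run in reverse. Once existence and uniqueness are in hand, the fiber of $P$ is in bijection with the monomials of $\Fund_{\wt(P)}$, completing the proof.
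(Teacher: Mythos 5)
Your proposal is correct and takes essentially the same route as the paper: the paper also decomposes $\PD(w)$ into destandardization fibers and proves that the fiber over each $Q \in \QPD(w)$ has generating function $\Fund_{\wt(Q)}$, constructing the unique preimage of each admissible weight $b$ by sliding blocks of crosses southeast (east to west, guided by the refinement of $\flatten(\wt(Q))$ by $\flatten(b)$), with uniqueness from the lack of choice at each step. The dominance bookkeeping resolves the way you guessed ($\wt(P)\geq\wt(\destand(P))$ since crosses move to higher rows), and the reducedness check you flag as the subtle point is treated no less tersely in the paper's own proof.
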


\begin{proof}
  Note that if $\destand(P)=Q$, then $\wt(P) \geq \wt(Q)$ and $\flatten(\wt(P))$ refines $\flatten(\wt(Q))$ since $Q$ is obtained by moving \emph{all} $\cross$'s in row $i-1$ to row $i$. Conversely, we claim that given $Q \in \QPD(w)$, for every weak composition $b$ of length $n$ such that $b \geq \wt(Q)$ and $\flatten(b)$ refines $\flatten(\wt(Q))$, there is a unique $P \in \PD(w)$ with $\wt(P) = b$ such that $\destand(P) = Q$. From the claim, for $Q \in \QPD(w)$, we have
  \begin{displaymath}
    \sum_{P \in \destand^{-1}(Q)} x^{\wt(P)} = \Fund_{\wt(Q)}.
  \end{displaymath}
  The theorem follows from this and Lemma~\ref{lem:w-destand}.
  
  To construct $P$ from $b$ and $Q$, for $j = 1,\ldots,n$, if $\wt(Q)_{j} = b_{i_{j-1} + 1} + \cdots + b_{i_{j}}$, then, from east to west, slide the first $b_{i_{j-1} + 1}$ $\cross$'s southeast to row $i_{j-1} + 1$, the next $b_{i_{j-1} + 2}$ $\cross$'s southeast to row $i_{j-1} + 2$, and so on. Existence is proved, and uniqueness follows from the lack of choice at every step.
\end{proof}

For example, from Figure~\ref{fig:QPD} we can compute
\begin{equation}
  \sch_{(146235)} = \Fund_{(2,2,1,0,0)} + \Fund_{(1,3,1,0,0)} + \Fund_{(1,2,2,0,0)} + \Fund_{(0,3,2,0,0)} + \Fund_{(0,2,3,0,0)}.
  \label{e:sch32_F}
\end{equation}

Of course, since $146235$ is a Grassmannian permutation, this is the same example as the running example of $\lambda=(3,2)$ in Section~\ref{sec:poly}.

For a non-Grassmannian example, the Schubert polynomial for $w = 135264$ is
\begin{eqnarray}
  \sch_{(135264)} & = & x_1^2 x_2^2+2 x_1^2 x_2 x_3+x_1^2 x_2 x_4+x_1^2 x_2 x_5+x_1^2 x_3^2+x_1^2 x_3 x_4\\\nonumber
  & & +x_1^2 x_3 x_5+2 x_1 x_2^2 x_3 +x_1 x_2^2 x_4+x_1 x_2^2 x_5+2 x_1 x_2 x_3^2\\\nonumber
  & & +2 x_1 x_2 x_3 x_4+2 x_1 x_2 x_3 x_5+x_1 x_3^2 x_4+x_1 x_3^2 x_5 +x_2^2 x_3^2\\\nonumber
  & & +x_2^2 x_3 x_4+x_2^2 x_3 x_5+x_2 x_3^2 x_4+x_2 x_3^2 x_5.
\end{eqnarray}
The $25$ terms in the monomial expansion correspond to the $25$ pipe dreams for $w$, of which only the $5$ shown in Figure~\ref{fig:QPD-graph} are quasi-Yamanouchi. Thus we have the following compacted expansion in terms of fundamental slide polynomials,
\begin{equation}
  \sch_{(135264)} = \Fund_{(1,1,2,0,0)} + \Fund_{(1,2,1,0,0)} + \Fund_{(0,2,2,0,0)} + \Fund_{(0,2,1,0,1)} + \Fund_{(0,1,2,0,1)}.
  \label{e:sch_F}
\end{equation}

\begin{figure}[ht]
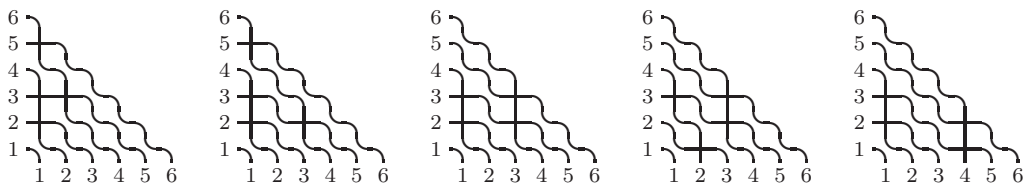

  \begin{displaymath}
    \begin{array}{ccccc}
      \pipes{%
        6 & \upelb \\
        5 & \cross & \upelb \\
        4 & \elbow & \elbow & \upelb \\
        3 & \cross & \cross & \elbow & \upelb \\
        2 & \cross & \elbow & \elbow & \elbow & \upelb \\
        1 & \elbow & \elbow & \elbow & \elbow & \elbow & \upelb \\
        & 1 & 2 & 3 & 4 & 5 & 6} &
      \pipes{%
        6 & \upelb \\
        5 & \cross & \upelb \\
        4 & \elbow & \elbow & \upelb \\
        3 & \cross & \elbow & \elbow & \upelb \\
        2 & \cross & \elbow & \cross & \elbow & \upelb \\
        1 & \elbow & \elbow & \elbow & \elbow & \elbow & \upelb \\
        & 1 & 2 & 3 & 4 & 5 & 6} &
      \pipes{%
        6 & \upelb \\
        5 & \elbow & \upelb \\
        4 & \elbow & \elbow & \upelb \\
        3 & \cross & \elbow & \cross & \upelb \\
        2 & \cross & \elbow & \cross & \elbow & \upelb \\
        1 & \elbow & \elbow & \elbow & \elbow & \elbow & \upelb \\
        & 1 & 2 & 3 & 4 & 5 & 6} &
      \pipes{%
        6 & \upelb \\
        5 & \elbow & \upelb \\
        4 & \elbow & \elbow & \upelb \\
        3 & \cross & \elbow & \cross & \upelb \\
        2 & \elbow & \elbow & \cross & \elbow & \upelb \\
        1 & \elbow & \cross & \elbow & \elbow & \elbow & \upelb \\
        & 1 & 2 & 3 & 4 & 5 & 6} &
      \pipes{%
        6 & \upelb \\
        5 & \elbow & \upelb \\
        4 & \elbow & \elbow & \upelb \\
        3 & \cross & \elbow & \elbow & \upelb \\
        2 & \cross & \elbow & \elbow & \cross & \upelb \\
        1 & \elbow & \elbow & \elbow & \cross & \elbow & \upelb \\
        & 1 & 2 & 3 & 4 & 5 & 6}
    \end{array}
\end{displaymath}
\caption{\label{fig:QPD-graph} The quasi-Yamanouchi pipe dreams for $w=135264$.}
\end{figure}

The fundamental slide basis also has a triangularity with respect to the Schubert basis that makes changing between the bases computationally efficient.

\begin{proposition}
  For $w$ any permutation, there exist nonnegative integer coefficients $c_{w,b}$ such that
  \begin{equation}
    \sch_{w} = \Fund_{L(w)} + \sum_{b > L(w)} c_{w,b} \Fund_{b},
    \label{e:lead}
  \end{equation}
  where $L(w)$ is the Lehmer code of $w$.
  \label{prop:lead}
\end{proposition}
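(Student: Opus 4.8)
The plan is to obtain the expansion \eqref{e:lead}, coefficients and all, essentially for free from Theorem~\ref{thm:schubert-slide}, and then to locate the leading term by invoking the well-known fact that the Lehmer code $L(w)$ records the dominance-minimal monomial of $\sch_w$.

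First I would collect the terms of \eqref{e:schubert-slide} according to their indices: writing $\sch_w = \sum_b c_{w,b}\,\Fund_b$ we get $c_{w,b} = \#\{P \in \QPD(w) : \wt(P) = b\}$, which are nonnegative integers, and since $\{\Fund_b\}$ is a basis of $\Poly_n^k$ (Theorem~\ref{thm:basis-fund}) this is the unique such expansion. Hence it remains only to show that (i) $c_{w,b}\neq 0$ forces $b \geq L(w)$, and (ii) $c_{w,L(w)} = 1$.

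For this I would use the classical triangularity of Schubert polynomials in the monomial basis, namely $\sch_w = x^{L(w)} + \sum_{a > L(w)} d_{w,a}\,x^a$ with $d_{w,a}\in\mathbb{Z}_{\geq 0}$ (see, e.g., \cite{Mac91}). Comparing this with the monomial expansion $\sch_w = \sum_{P \in \PD(w)} x^{\wt(P)}$ of \eqref{e:schubert} shows that every $P \in \PD(w)$ has $\wt(P) \geq L(w)$ and that exactly one $P \in \PD(w)$ has $\wt(P) = L(w)$ --- necessarily the pipe dream $P_{L(w)}$ obtained by placing $L(w)_i$ crosses flush left in row $i$ (the one used in the proof of Lemma~\ref{lem:w-destand}), which is quasi-Yamanouchi because the westernmost cross of each of its nonempty rows lies in column $1$. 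Now (i) is immediate since $\QPD(w) \subseteq \PD(w)$, and for (ii) the membership $P_{L(w)} \in \QPD(w)$ gives $c_{w,L(w)} \geq 1$ while the uniqueness of $P_{L(w)}$ among all pipe dreams of weight $L(w)$ gives $c_{w,L(w)} \leq 1$. Every remaining index $b$ then satisfies $b \geq L(w)$ and $b \neq L(w)$, i.e.\ $b > L(w)$, which is exactly \eqref{e:lead}.

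The only genuine ingredient here is the dominance triangularity of the monomial expansion of $\sch_w$; everything else is bookkeeping. The hard part, if one wants a self-contained argument rather than citing \cite{Mac91}, is to prove this triangularity directly in the pipe-dream model: one shows $P_{L(w)}$ is the source of the ladder-move generation of $\PD(w)$ from \cite{BB93} and that each ladder move relocates a single cross to a strictly smaller-indexed row in the French orientation used here, hence weakly increases the weight in dominance order. The delicate points are matching the matrix-versus-French conventions so that a ladder move becomes a ``southeast'' relocation of a cross, and checking that $\PD(w)$ is indeed generated from $P_{L(w)}$ in precisely that direction; an inductive proof via the transition (Monk) recursion for $\sch_w$ is an alternative that sidesteps these conventions.
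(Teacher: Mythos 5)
Your proof is correct, and it takes a somewhat different route from the paper's. The paper disposes of this in two lines by a change-of-basis argument: the leading monomial of $\sch_w$ in reverse lexicographic order is $x^{L(w)}$ \cite{Mac91}, and the fundamental slide polynomials are upper uni-triangular over the monomials (proof of Theorem~\ref{thm:basis-fund}), so the expansion coming from Theorem~\ref{thm:schubert-slide} must lead with $\Fund_{L(w)}$, nonnegativity being implicit in that theorem. You instead stay entirely inside the pipe-dream model: the coefficients count quasi-Yamanouchi pipe dreams by weight, the dominance triangularity of the monomial expansion of $\sch_w$ forces $\wt(P)\geq L(w)$ for every $P\in\PD(w)$ with $P_{L(w)}$ the unique pipe dream of weight $L(w)$, and $P_{L(w)}$ is quasi-Yamanouchi because its westernmost crosses sit in column $1$. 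What your route buys: the statement is proved for dominance order, which is what $>$ means in this paper (the fact cited in the paper is the reverse-lexicographic one, and upgrading to dominance really does require the dominance form of the leading-monomial fact, which you supply), the nonnegativity and integrality of the $c_{w,b}$ are explicit, and the leading term is witnessed by an explicit quasi-Yamanouchi pipe dream. What the paper's route buys: brevity, and it only invokes the weaker reverse-lex input together with triangularity already established for the slide bases. Two small remarks on your optional self-contained sketch: each ladder move of \cite{BB93} moves a cross from row $i$ to a row $i-k$ with $k\geq 1$, so it \emph{strictly} (not just weakly) increases the weight in dominance order --- strictness is exactly what gives the coefficient $1$ on $x^{L(w)}$, i.e.\ the uniqueness of the weight-$L(w)$ pipe dream; and you should be aware that \cite{Mac91} states the reverse-lexicographic version, so the dominance triangularity is most cleanly sourced from the ladder-move generation you describe.
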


\begin{proof}
  The leading monomial for $\sch_w$ in reverse lexicographic order is $x^{L(w)}$ \cite{Mac91}. The result follows from the triangularity of fundamental slide polynomials with respect to monomials mentioned in the proof of Theorem~\ref{thm:basis-fund}.
\end{proof}

%%%%%%%%%%%%%%%%%%%%%%%%%%%%%%%%%%%%%%%%%%%%%%%%%%%%%%%%%%%%%%%%
%
\section{Stability}
%
%%%%%%%%%%%%%%%%%%%%%%%%%%%%%%%%%%%%%%%%%%%%%%%%%%%%%%%%%%%%%%%%
\label{sec:stable}

%%%%%%%%%%%%%%%%%%%%%%%%%%%%%%%%%%%%%%%%%%%%%%%%%%%%%%%%%%%%%%%%
\subsection{Increasing the number of variables}
%%%%%%%%%%%%%%%%%%%%%%%%%%%%%%%%%%%%%%%%%%%%%%%%%%%%%%%%%%%%%%%%
\label{sec:stable-vars}

For $w$ a permutation of $\mathcal{S}_n$ and $m$ a nonnegative integer, let $w \times 1^m$ denote the permutation of $\mathcal{S}_{n+m}$ given by $w_1 \cdots w_n (n+1) \cdots (n+m)$. Lascoux and Sch\"{u}tzenberger \cite{LS82} used the following stability property of Schubert polynomials to show that Schubert polynomials defined by divided difference operators are well-defined (though, from the pipe dream perspective the result is far easier to see). 

\begin{theorem}[\cite{LS82}]
  For $w$ a permutation of $\mathcal{S}_n$ and $m$ a nonnegative integer, we have
  \begin{equation}
    \sch_{w \times 1^m} = \sch_w.
  \end{equation}
  \label{thm:schub-stable}
\end{theorem}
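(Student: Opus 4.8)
The plan is to prove the identity directly from the pipe dream definition \eqref{e:schubert}, exhibiting it at the level of the underlying combinatorial objects rather than as an identity obtained by manipulating polynomials; from this vantage point the statement is essentially a bookkeeping observation, as the paper's parenthetical remark already hints.

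First I would check that $w$ and $w \times 1^m$ determine the same element of $\mathcal{S}_\infty$ under the standard inclusions $\mathcal{S}_n \hookrightarrow \mathcal{S}_\infty$ and $\mathcal{S}_{n+m} \hookrightarrow \mathcal{S}_\infty$. Since $w \in \mathcal{S}_n$ we have $w_i = i$ for all $i > n$, so the block $(n+1)\cdots(n+m)$ appended by $\times 1^m$ merely names fixed points $n+1,\ldots,n+m$ that $w$ already possesses. In particular these appended entries create no new inversions and no new descents, so the set of descents of $w \times 1^m$ coincides with that of $w$ and is contained in $\{1,\ldots,n-1\}$; hence $w \times 1^m$ has no descent at or beyond $n$, and $\sch_{w \times 1^m}$ is well defined via \eqref{e:schubert}.

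Next I would note that both $\PD(u)$ and the weight statistic $\wt(P)$ depend on $u$ only through its image in $\mathcal{S}_\infty$: a pipe dream is a tiling of the first quadrant whose shape is read off as a permutation of $\mathcal{S}_\infty$, and $\wt(P)$ counts crosses row by row with no reference to the ambient symmetric group. Therefore the previous step yields a weight-preserving identification $\PD(w \times 1^m) = \PD(w)$, and summing $x^{\wt(P)}$ over these coinciding sets produces literally the same polynomial. To dispatch the apparent discrepancy in the number of variables appearing in the two sides, observe that the reducedness condition together with $w \in \mathcal{S}_n$ confines every cross of every $P \in \PD(w)$ to the staircase region — row $i$, columns $1,\ldots,n-i$ — so $\wt(P)$ is supported on $\{1,\ldots,n-1\}$ and $\sch_w$ in fact lies in the subring $\mathbb{Z}[x_1,\ldots,x_{n-1}]$ common to $\Poly_n$ and $\Poly_{n+m}$; thus equality of the two generating functions is an equality of honest polynomials regardless of the chosen variable set. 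This gives $\sch_{w \times 1^m} = \sch_w$. The only step requiring any care is the first one — confirming that appending $(n+1)\cdots(n+m)$ genuinely reproduces fixed points of $w$ and introduces neither inversions nor descents — and even this is immediate from the definition of the Lehmer code; everything else follows because, by \eqref{e:schubert}, $\sch_w$ is intrinsically a function of $w \in \mathcal{S}_\infty$. (One could also argue via divided difference operators or via the Billey--Jockusch--Stanley formula, but the former route risks circularity since Theorem~\ref{thm:schub-stable} is precisely what makes the divided difference definition well defined, and the latter amounts to the same reduced-word bookkeeping phrased differently.)
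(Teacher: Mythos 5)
Your argument is correct, and it is precisely the route the paper indicates: the paper gives no proof of its own, citing Lascoux--Sch\"utzenberger and remarking that ``from the pipe dream perspective the result is far easier to see,'' which is exactly what you carry out by observing that $w$ and $w\times 1^m$ are the same element of $\mathcal{S}_{\infty}$, so $\PD(w\times 1^m)=\PD(w)$ with identical weights and the generating functions coincide. Your additional check that all crosses lie in the staircase (so the identity is an equality of polynomials independent of the ambient variable set) is a sound and standard point that completes the bookkeeping.
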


Note that adding variables in the general polynomial setting is not the same as in the symmetric polynomial setting. The analog for Schur polynomials is the stability
\begin{displaymath}
  s_{\lambda}(x_1,\ldots,x_n,0,\ldots,0) = s_{\lambda}(x_1,\ldots,x_n).
\end{displaymath}
The analogous property for quasisymmetric functions is the following stability
\begin{eqnarray*}
  M_{\alpha}(x_1,\ldots,x_n,0,\ldots,0) & = & M_{\alpha}(x_1,\ldots,x_n),\\
  F_{\alpha}(x_1,\ldots,x_n,0,\ldots,0) & = & F_{\alpha}(x_1,\ldots,x_n).
\end{eqnarray*}

The slide polynomials exhibit a parallel stability property to that of Schubert polynomials. For a weak composition $a$ and a nonnegative integer $m$, let $a \times 0^m = (a_1,\ldots,a_n,0,\ldots,0)$ be the composition of length $n+m$ obtained by appending $m$ zeros to the end of $a$.

\begin{theorem}
  Let $a$ be a weak composition and $m$ a nonnegative integer. Then we have
  \begin{equation}
    \Mono_{a \times 0^m} = \Mono_{a} \hspace{1em} \mbox{and} \hspace{1em} \Fund_{a \times 0^m} = \Fund_{a}.
  \end{equation}
  \label{thm:slide-stable}
\end{theorem}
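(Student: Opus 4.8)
The plan is to argue directly from the defining sums in Definitions~\ref{def:monomial} and \ref{def:fundamental-shift}, exhibiting a weight-preserving bijection between the monomials appearing on the two sides. Fix a weak composition $a$ of length $n$ and a nonnegative integer $m$, and write $a' = a \times 0^m$, a weak composition of length $n+m$. Since $\flatten(a') = \flatten(a)$, the combinatorial data governing which shifted (or shifted-and-split) compositions appear is identical for $a$ and $a'$; the only thing that changes is how many trailing slots are available. I would handle $\Mono$ first and then note that the $\Fund$ case is the same argument verbatim with ``$\flatten(b) = \flatten(a)$'' replaced by ``$\flatten(b)$ refines $\flatten(a)$''.

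First I would show $\Mono_{a \times 0^m} = \Mono_a$ by comparing index sets. A monomial $x_1^{b_1} \cdots x_{n+m}^{b_{n+m}}$ occurs in $\Mono_{a'}$ exactly when $b \geq a'$ and $\flatten(b) = \flatten(a')$. The key observation is that the last nonzero part of $a'$ sits in position $\le n$: indeed $a'$ has its nonzero parts confined to positions $1,\dots,n$. If $b$ dominates $a'$ and flattens to $\flatten(a')$, then $b$ has the same number of nonzero parts as $a'$, namely $\ell(\flatten(a))$ of them; the dominance inequalities $b_1 + \cdots + b_i \ge a'_1 + \cdots + a'_i$ for $i = 1,\dots,n$ already force all the nonzero parts of $b$ to lie in positions $1,\dots,n$ (if the $j$th nonzero part of $b$ were in a position $> n$, then for the largest index $i \le n$ we would have $b_1 + \cdots + b_i$ missing that part while $a'_1 + \cdots + a'_i$ already includes all $\ell(\flatten(a))$ parts of $\flatten(a)$ whose cumulative sums it needs, contradicting dominance at that $i$). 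Hence $b_{n+1} = \cdots = b_{n+m} = 0$, so $b = c \times 0^m$ for a unique weak composition $c$ of length $n$, and $b \ge a'$, $\flatten(b) = \flatten(a')$ translate precisely into $c \ge a$, $\flatten(c) = \flatten(a)$. This sets up the bijection $b \mapsto c$ between the index sets, and $x^b = x^c$ under the natural inclusion, giving $\Mono_{a'} = \Mono_a$.

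For $\Fund$, the identical argument applies: a monomial $x^b$ in $\Fund_{a'}$ has $\flatten(b)$ refining $\flatten(a')$, hence $b$ has \emph{at least} $\ell(\flatten(a))$ nonzero parts and the same dominance constraint at $i = n$ still forces every nonzero part of $b$ into positions $1,\dots,n$; so again $b = c \times 0^m$, and the refinement condition for $b$ over $a'$ is exactly the refinement condition for $c$ over $a$. Thus $\Fund_{a'} = \Fund_a$. Alternatively, one can deduce the $\Fund$ statement from the $\Mono$ statement together with Proposition~\ref{prop:F-to-M}, using that strong dominance is likewise insensitive to appended zeros, but the direct monomial argument is cleaner and avoids checking that $b \sgeq a' \iff c \sgeq a$.

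The only mildly delicate point --- and the step I would be most careful about --- is the claim that dominance over $a'$ together with the flattening/refinement condition forces the nonzero parts of $b$ into the first $n$ positions. It is worth isolating this as the crux: since $a'$ has no nonzero parts beyond position $n$, for any $i \ge n$ the cumulative sum $a'_1 + \cdots + a'_i$ equals the full size $|a|$, so dominance gives $b_1 + \cdots + b_i \ge |a|$ for all $i \ge n$, in particular $b_1 + \cdots + b_n \ge |a|$; but $|b| = |a|$, so $b_{n+1} = \cdots = b_{n+m} = 0$ automatically. This makes the reduction to length $n$ immediate and the bijection transparent, and the equalities $\Mono_{a \times 0^m} = \Mono_a$ and $\Fund_{a \times 0^m} = \Fund_a$ follow at once.
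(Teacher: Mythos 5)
Your proof is correct and follows essentially the same route as the paper: the paper's (one-line) argument is precisely that, within the index sets of Definitions~\ref{def:monomial} and \ref{def:fundamental-shift}, $b \geq a \times 0^m$ holds if and only if $(b_1,\ldots,b_n) \geq a$ and $b_i = 0$ for all $i > n$, which is your crux that dominance at position $n$ together with $|b| = |a|$ (forced by the flattening/refinement condition) kills the trailing entries. Your expanded write-up just makes explicit the resulting bijection of index sets that the paper leaves implicit.
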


\begin{proof}
  For $a$ a weak composition of length $n$ and $b$ a weak composition of length $n+m$, $b \geq a \times 0^m$ if and only if $(b_1,\ldots,b_n) \geq a$ and $b_i=0$ for all $i>n$. The result now follows from the definitions of $\Mono_a$ and $\Fund_a$.
\end{proof}

%%%%%%%%%%%%%%%%%%%%%%%%%%%%%%%%%%%%%%%%%%%%%%%%%%%%%%%%%%%%%%%%
\subsection{Stable Limits}
%%%%%%%%%%%%%%%%%%%%%%%%%%%%%%%%%%%%%%%%%%%%%%%%%%%%%%%%%%%%%%%%
\label{sec:stable-limit}

In this section we consider a different stability, the one that gives rise to symmetric \emph{functions}, i.e.
\begin{equation}
  \lim_{n \rightarrow \infty} s_{\lambda}(x_1,\ldots,x_n) = s_{\lambda}(X).
  \label{e:stable-s}
\end{equation}

To lift this limit to Schubert polynomials, begin by noticing that $v(\lambda,n+m) = 1^m \times v(\lambda,n)$, where for $w$ a permutation of $\mathcal{S}_n$ and $m$ a nonnegative integer, $1^m \times w$ denotes the permutation of $\mathcal{S}_{n+m}$ given by $1 \cdots m (w_1+m) \cdots (w_n+m)$. In general, we wish to consider the limit (if it exists) of the Schubert polynomial $\sch_{1^m \times w}$ as $m$ grows. For $w$ a Grassmannian permutation, we may re-write \eqref{e:stable-s} as
\begin{equation}
  \lim_{m \rightarrow \infty} \sch_{1^m \times v(\lambda,n)} = \lim_{m \rightarrow \infty} \sch_{v(\lambda,n+m)} = \lim_{n \rightarrow \infty} s_{\lambda}(x_1,\ldots,x_n) = s_{\lambda}(X).
  \label{e:stable-sch}
\end{equation}

For the general case, recall the set of \emph{reduced decompositions for $w$}, denoted by $\RD(w)$, is the set of $\ell$-tuples $(s_{i_1},\ldots,s_{i_{\ell}})$ for which $w = s_{i_{\ell}} \cdots s_{i_1}$, where $s_i$ is the simple transposition swapping $i$ and $i+1$ and $\ell = \ell(w)$ is the number of inversions of $w$.

For example, the reduced decompositions for $w = 24153$ are
\begin{equation}
  \RD(w) = \{s_1 s_3 s_2 s_4, s_1 s_3 s_4 s_2, s_3 s_1 s_4 s_2, s_3 s_1 s_2 s_4, s_3 s_4 s_1 s_2\}.
\end{equation}

Stanley \cite{Sta84} defined symmetric functions indexed by permutations. To avoid confusion with fundamental quasisymmetric functions, we diverge from standard notation of $F_w$ and denote the Stanley symmetric functions by $\sta_{w}$. Also note that we follow usual conventions and have our $\sta_w = F_{w^{-1}}$ in \cite{Sta84}.

\begin{definition}[\cite{Sta84}]
  For $w$ a permutation, the \emph{Stanley symmetric function of $w$}, denoted by $\sta_{w}$, is
  \begin{equation}
    \sta_{w}(X) = \sum_{\sigma \in \RD(w)} F_{\Des(\sigma)}(X),
  \end{equation}
  where $\Des(\sigma)$ is the descent composition of the reversed sequence of indices of $\sigma$, i.e. $\Des(s_{i_{\ell}}\cdots s_{i_{1}}) = \Des(i_1,\ldots,i_{\ell})$.
\end{definition}

For example, we compute the Stanley symmetric function for $w = 24153$ to be
\begin{equation}
  \sta_{24153}(X) = F_{(2,2)}(X) + F_{(2,1,1)}(X) + 2 F_{(1,2,1)}(X) + F_{(1,1,2)}(X).
\end{equation}

Not only are the Stanley symmetric functions honest symmetric functions \cite{Sta84}, Edelman and Greene \cite{EG87} showed that they are, in fact, Schur positive. For example, $\sta_{24153}(X) = s_{(2,2)}(X) + s_{(2,1,1)}(X)$. Furthermore, Macdonald \cite{Mac91} showed that Stanley symmetric functions are the \emph{stable limits} of Schubert polynomials. 

\begin{theorem}[\cite{Mac91}]
  For $w$ a permutation of $\mathcal{S}_n$, we have
  \begin{equation}
    \lim_{m \rightarrow \infty} \sch_{1^m \times w} = \sta_{w}(X).
  \end{equation}
  \label{thm:schub-limit}
\end{theorem}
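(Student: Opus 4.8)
The plan is to combine the fundamental slide expansion of Schubert polynomials (Theorem~\ref{thm:schubert-slide}) with the stable-limit behavior of fundamental slide polynomials, reducing everything to a statement about quasi-Yamanouchi pipe dreams versus reduced decompositions. First I would record the behavior of fundamental slide polynomials under the operation $a \mapsto 0^m \times a$ (prepending zeros, which corresponds on the Schubert side to $w \mapsto 1^m \times w$). Writing $\Fund_{0^m \times a}$ out from Definition~\ref{def:fundamental-shift}, the dominance condition $b \geq 0^m \times a$ forces the first $m$ entries of $b$ to allow shifting into the newly freed initial slots, and as $m \to \infty$ the set of resulting monomials stabilizes to exactly the fundamental quasisymmetric function $F_{\flatten(a)}(X)$: this is essentially Lemma~\ref{lem:lift-fund} taken in the limit, since prepending enough zeros makes any $a$ effectively ``quasi-flat at infinity.'' So $\lim_{m\to\infty} \Fund_{0^m \times a} = F_{\flatten(a)}(X)$.

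Next I would track how $\QPD(1^m \times w)$ relates to $\QPD(w)$. Prepending $1^m$ to $w$ shifts the essential crossings down by $m$ rows and introduces $m$ new rows at the bottom; for $m$ large, part~(4) of Lemma~\ref{lem:w-destand} applies (the permutation $1^m \times w$ has $w^{-1}(1) = m+1$ large, with all descents before it), so $\destand$ becomes injective and $\QPD(1^m \times w) = \PD(1^m \times w)$ stabilizes in bijection with $\RD(w)$ via the standard correspondence between reduced pipe dreams and reduced decompositions \cite{BB93,BJS93}. Under this identification, for a pipe dream $P$ corresponding to a reduced word $\sigma$, the weight composition $\wt(P)$ (shifted by the $m$ leading zeros) flattens to $\Des(\sigma)$ — this is exactly the compatible-sequence / reduced-word correspondence of Billey--Jockusch--Stanley, and the descent composition of the reading word is what governs which fundamental quasisymmetric function appears.

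Assembling: applying Theorem~\ref{thm:schubert-slide} to $1^m \times w$ gives $\sch_{1^m \times w} = \sum_{P \in \QPD(1^m \times w)} \Fund_{\wt(P)}$; taking $m \to \infty$, the indexing set stabilizes to (a copy of) $\RD(w)$, each $\Fund_{\wt(P)}$ converges to $F_{\flatten(\wt(P))}(X) = F_{\Des(\sigma)}(X)$ by the limit computed above, and the sum becomes $\sum_{\sigma \in \RD(w)} F_{\Des(\sigma)}(X) = \sta_w(X)$ by definition. One should check convergence is legitimate, i.e. that in each fixed degree the coefficients stabilize — this follows because $\Fund_{0^m\times a}$ stabilizes coefficient-wise and the number of terms in the (finite) fundamental slide expansion of $\sch_{1^m\times w}$ is bounded by $|\RD(w)|$ for $m$ large.

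The main obstacle I expect is verifying cleanly that the $\QPD$ indexing set truly stabilizes with the correct weights — that is, nailing down the bijection $\QPD(1^m \times w) \leftrightarrow \RD(w)$ for large $m$ together with the identity $\flatten(\wt(P)) = \Des(\sigma)$. The injectivity half comes from Lemma~\ref{lem:w-destand}(4), but one still needs that every reduced pipe dream of $1^m\times w$ genuinely has its crossings pushed into rows beyond the first (so no crossing sits in column $1$ of an early row in a way that would obstruct the limit), and that the classical pipe-dream/reduced-word dictionary matches row-weights to descent compositions under our French conventions. Once that bookkeeping is pinned down, the rest is a routine limit interchange.
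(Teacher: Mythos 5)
Your overall architecture matches the paper's: expand $\sch_{1^m\times w}$ via Theorem~\ref{thm:schubert-slide}, send each $\Fund_{0^m\times a}$ to $F_{\flatten(a)}(X)$ via Theorem~\ref{thm:stable-limit}, and identify the stabilized set $\QPD(1^m\times w)$ with $\RD(w)$ so that weights flatten to descent compositions. However, the step you use to obtain that identification is wrong. Lemma~\ref{lem:w-destand}(4) does not apply to $1^m\times w$: since $1^m\times w = 1\,2\cdots m\,(w_1+m)\cdots(w_n+m)$, we have $(1^m\times w)^{-1}(1)=1$, not $m+1$, and $1^m\times w$ has descents after position $1$ whenever $w$ is not the identity, so $\destand$ on $\PD(1^m\times w)$ is never injective in the nontrivial case. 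The conclusion you draw, $\QPD(1^m\times w)=\PD(1^m\times w)$, is also false and indeed backwards: $^{\#}\PD(1^m\times w)$ grows without bound as $m\to\infty$ (it counts the monomials of $\sch_{1^m\times w}$, which converge to the infinitely many monomials of $\sta_w$), while $^{\#}\QPD(1^m\times w)$ stabilizes at $^{\#}\RD(w)$ (Theorem~\ref{thm:w-monotone}). Moreover, even for a fixed permutation, reduced pipe dreams are not in bijection with reduced words: the crossing-reading map is many-to-one (one word, many compatible sequences), so ``$\PD$ stabilizes in bijection with $\RD(w)$'' cannot be the right statement.

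What actually fills the gap --- and what you correctly flag as the crux but do not supply --- is the standardization machinery of Section~\ref{sec:stable-refine}: the map $\stand$ of Definition~\ref{def:w-stand} reading the crosses of a pipe dream as a reduced word, its left inverse $\sit$ (Definition~\ref{def:w-sit}) built from virtual pipe dreams, and Theorem~\ref{thm:w-stand}, which gives $\flatten(\wt(P))=\Des(\stand(P))$ for $P\in\QPD(w)$, injectivity of $\stand$ on $\QPD(w)$, and surjectivity onto $\RD(w)$ exactly when $\eta(w)\le 0$. Since $\eta(1^m\times w)=\eta(w)-m$, the surjectivity half --- the part you wave at with ``crossings pushed into rows beyond the first'' --- requires the quantitative Lemma~\ref{lem:lowest} controlling how low a (virtual) quasi-Yamanouchi pipe dream can reach; without it you have no argument that every reduced word of $w$ is realized by some $P\in\QPD(1^m\times w)$ once $m$ is large. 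The limit-interchange bookkeeping in your last paragraph is fine, but the central bijection is unproved as written, and the lemma you cite for it in fact fails for $1^m\times w$.
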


The monomial and fundamental quasisymmetric polynomials exhibit a parallel stability to Schur polynomials, namely, 
\begin{eqnarray}
  \lim_{n \rightarrow \infty} M_{\alpha}(x_1,\ldots,x_n) & = & M_{\alpha}(X), \label{e:stable-M}\\
  \lim_{n \rightarrow \infty} F_{\alpha}(x_1,\ldots,x_n) & = & F_{\alpha}(X). \label{e:stable-F}
\end{eqnarray}

The slide polynomials exhibit an analogous stability. It is easy to see that $L(1^m \times w) = 0^m \times L(w)$, so we let $0^m \times a = (0,\ldots,0,a_1,\ldots,a_n)$ be the composition of length $n+m$ obtained by prepending $m$ zeros to $a$. Then we have the following stability result for slide polynomials.

\begin{theorem}
  For a weak composition $a$, we have
  \begin{eqnarray}
    \lim_{m\rightarrow\infty}\Mono_{0^m \times a} & = & M_{\flatten(a)}(X),\\
    \lim_{m\rightarrow\infty}\Fund_{0^m \times a} & = & F_{\flatten(a)}(X).
  \end{eqnarray}
  \label{thm:stable-limit}
\end{theorem}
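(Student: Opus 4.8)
The plan is to reduce the limit statements to the stability already established in Theorem~\ref{thm:slide-stable} combined with the $n$-variable characterizations in Lemmas~\ref{lem:lift-mono} and \ref{lem:lift-fund}. First I would observe that $0^m \times a$ is a weak composition whose nonzero entries sit in positions $m+1, m+2, \ldots, m+k$ (where $k$ is the index of the last nonzero entry of $a$, after deleting a harmless leading/trailing block), so in particular $0^m \times a$ is always quasi-flat once $m \geq 1$ pushes past any leading zeros—more carefully, $\flatten(0^m \times a) = \flatten(a)$ and the nonzero terms of $0^m \times a$ do occupy an interval. Wait: that is only true if $a$ itself has its nonzero terms in an interval. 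The cleaner route is: $0^m \times a$ and $0^{m'} \times a$ differ only by prepended zeros, and $\Mono$ and $\Fund$ are insensitive to \emph{trailing} zeros by Theorem~\ref{thm:slide-stable}, but here we are prepending. So instead I would argue directly from the defining sums \eqref{e:monomial-shift} and \eqref{e:fundamental-shift}.

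The key steps, in order, are as follows. (1) Fix $a$ of length $n$ with last nonzero entry in position $k$, and write $\alpha = \flatten(a)$, $\ell = \ell(\alpha)$. Note $0^m \times a$ has length $n+m$ and its nonzero entries all lie in positions $> m$. (2) For the monomial case: a weak composition $b$ of length $n+m$ contributes to $\Mono_{0^m \times a}$ iff $\flatten(b) = \alpha$ and $b \geq 0^m \times a$. The dominance condition $b \geq 0^m\times a$ forces $b_1 = \cdots = b_m = 0$ to be satisfied automatically only when... actually it forces the partial sums of $b$ to dominate those of $0^m\times a$, which are $0$ for the first $m$ slots; since the first nonzero partial sum of $0^m\times a$ occurs at position $m+1$, and partial sums of $b$ are nondecreasing, for $m$ large every arrangement of the $\ell$ nonzero parts $\alpha_1, \ldots, \alpha_\ell$ in increasing-index order automatically dominates. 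Precisely, once $m \geq k$ the condition $b \geq 0^m \times a$ is implied by $\flatten(b) = \alpha$ together with the nonzero parts of $b$ appearing at positions that are weakly left of those in $0^m\times a$; and as $m \to \infty$ this captures exactly all $b$ with $\flatten(b)=\alpha$, i.e. the monomials of $M_\alpha(X)$. So $\Mono_{0^m\times a}$ stabilizes to $M_{\flatten(a)}(X)$. (3) For the fundamental case, run the identical argument with ``$\flatten(b)$ refines $\alpha$'' in place of ``$\flatten(b) = \alpha$,'' using \eqref{e:fundamental} and \eqref{e:stable-F}; alternatively, expand via Proposition~\ref{prop:F-to-M} into monomial slide polynomials and apply the monomial case termwise, checking that the strong-dominance condition $b \sgeq a$ also trivializes in the limit.

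A slick alternative, which I would actually prefer to present, is: for $m \geq k$ the composition $0^m \times a$ is quasi-flat (its nonzero block is an interval sitting in positions $m+1,\dots,m+k$), and $k' := $ index of its last nonzero term is $m+k$; by Lemma~\ref{lem:lift-mono}, $\Mono_{0^m\times a}$ is quasisymmetric in $x_1,\dots,x_{m+k}$ and equals $M_{\flatten(a)}(x_1,\dots,x_{m+k})$. Hmm, but $0^m \times a$ being quasi-flat requires $a$'s nonzero entries to already be in an interval—so first replace $a$ by $\flatten(a)$ padded appropriately; since $\Mono$ and $\Fund$ depend on $a$ only in a way that is invariant under the relevant shifts, I would note $\Mono_{0^m \times a} = \Mono_{0^{m+j}\times \flatten(a)}$ for suitable $j$ once the leading zeros of $a$ are absorbed, or just argue the limit is unchanged by replacing $a$ with $\flatten(a)$ using the defining sums. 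Then $\Mono_{0^m \times \flatten(a)} = M_{\flatten(a)}(x_1,\dots,x_{m+\ell})$ by Lemma~\ref{lem:lift-mono}, and letting $m \to \infty$ and invoking \eqref{e:stable-M} gives the result; \eqref{e:stable-F} with Lemma~\ref{lem:lift-fund} handles the fundamental case identically.

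The main obstacle is purely bookkeeping: carefully verifying that prepending zeros (rather than appending them) eventually makes $0^m \times a$ quasi-flat with its nonzero block far enough to the right that the dominance constraint $b \geq 0^m \times a$ becomes vacuous relative to $\flatten(b) = \flatten(a)$—equivalently, that no monomial of $M_{\flatten(a)}(X)$ is ever missed for $m$ large, and none is spuriously included. Once that threshold ($m \geq k$, or after flattening $m \geq 0$) is pinned down, the rest is an immediate appeal to Lemmas~\ref{lem:lift-mono} and \ref{lem:lift-fund} and the known quasisymmetric stabilities \eqref{e:stable-M} and \eqref{e:stable-F}.
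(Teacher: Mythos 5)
Your proposal is essentially correct, and its core is the same trivialization-of-dominance observation the paper uses; the paper's proof is in fact your ``slick alternative'' carried out correctly. Namely, with $\ell=\ell(\flatten(a))$, the paper evaluates $\Mono_{0^m\times a}$ and $\Fund_{0^m\times a}$ at $(x_1,\ldots,x_{m+\ell},0,\ldots,0)$: once only the first $m+\ell$ variables are in play, the $\ell$ nonzero parts have no room to sit ``too far right,'' so the dominance condition $b\geq 0^m\times a$ becomes automatic and the truncation equals $M_{\flatten(a)}(x_1,\ldots,x_{m+\ell})$, resp.\ $F_{\flatten(a)}(x_1,\ldots,x_{m+\ell})$, in the spirit of Lemmas~\ref{lem:lift-mono} and~\ref{lem:lift-fund}; quasisymmetric stability as in \eqref{e:stable-M} and \eqref{e:stable-F} then gives the limit. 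Your steps (2)--(3) prove exactly this trivialization monomial-by-monomial (every monomial of $M_{\flatten(a)}(X)$, resp.\ every $x^b$ with $\flatten(b)$ refining $\flatten(a)$, is eventually captured and nothing else ever appears), which is sound and yields the same conclusion.

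One concrete caveat: the identity you float in the alternative route, $\Mono_{0^m\times a}=\Mono_{0^{m+j}\times\flatten(a)}$, is false whenever $a$ has an interior zero, so do not lean on it. For $a=(2,0,3)$ and $j=1$ (so the lengths match), the monomial $x_{m+2}^2x_{m+3}^3$ occurs in $\Mono_{(0^{m+1},2,3)}$ but not in $\Mono_{(0^m,2,0,3)}$, since $(0^{m+1},2,3)$ fails to dominate $(0^m,2,0,3)$ at position $m+1$; absorbing leading zeros of $a$ does not remove interior zeros, so $0^m\times a$ need never become quasi-flat. The two sides agree only after truncating to the first $m+\ell$ variables (as the paper does) or after passing to the limit, which is exactly what your fallback ``argue the limit is unchanged using the defining sums'' amounts to --- i.e., your route (2)--(3). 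With that route as the actual argument, the proof is complete.
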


\begin{proof}
  Let $\ell = \ell(\flatten(a))$ be the number of nonzero terms of $a$. Then for all $m>0$, by Lemmas~\ref{lem:lift-mono} and \ref{lem:lift-fund}, we have
  \begin{eqnarray*}
    \Mono_{0^m \times a}(x_1,\ldots,x_{m+\ell},0,\ldots,0) & = & M_{\flatten(a)}(x_1,\ldots,x_{m+\ell},0,\ldots,0) \\ & = & M_{\flatten(a)}(x_1,\ldots,x_{m+\ell}),\\
    \Fund_{0^m \times a}(x_1,\ldots,x_{m+\ell},0,\ldots,0) & = & F_{\flatten(a)}(x_1,\ldots,x_{m+\ell},0,\ldots,0) \\ & = & F_{\flatten(a)}(x_1,\ldots,x_{m+\ell}),
  \end{eqnarray*}
  where the latter equalities hold by stability of quasisymmetric polynomials.
\end{proof}

%%%%%%%%%%%%%%%%%%%%%%%%%%%%%%%%%%%%%%%%%%%%%%%%%%%%%%%%%%%%%%%%
\subsection{A refinement of stability}
%%%%%%%%%%%%%%%%%%%%%%%%%%%%%%%%%%%%%%%%%%%%%%%%%%%%%%%%%%%%%%%%
\label{sec:stable-refine}

The fundamental slide polynomials provide a useful tool to give an easy proof of Theorem~\ref{thm:schub-limit} by means of a more subtle understanding of the stability. We define a \emph{standardization} map from pipe dreams to reduced decompositions that is injective on the set of quasi-Yamanouchi pipe dreams.

\begin{definition}
  For $P \in \PD(w)$, the \emph{standardization of P}, denoted by $\stand(P)$, is the decomposition obtained by reading the $\cross$'s of $P$ from left to right, top to bottom, and recording the cross in position $(i,j)$ as $s_{i+j-1}$. 
  \label{def:w-stand}  
\end{definition}

For examples of the standardization map, see Figure~\ref{fig:w-stand}. Note that when $w = v(\lambda,n)$ is a Grassmannian permutation, and $\phi:\PD(w) \stackrel{\sim}{\rightarrow} \SSYT_n(\lambda)$ is the usual bijection, we have $\phi(\stand(P)) = \stand(\phi(P))$, where $\stand$ on semi-standard Young tableaux is the usual standardization map on semistandard Young tableaux that gives a standard Young tableaux.

\begin{figure}[ht]
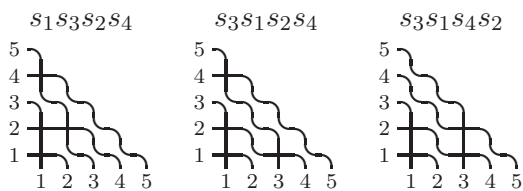

  \begin{displaymath}
    \begin{array}{ccc}
    s_1 s_3 s_2 s_4 &
    s_3 s_1 s_2 s_4 &
    s_3 s_1 s_4 s_2 \\
    \pipes{%
      5 & \upelb \\
      4 & \cross & \upelb \\
      3 & \elbow & \elbow & \upelb \\
      2 & \cross & \cross & \elbow & \upelb \\
      1 & \cross & \elbow & \elbow & \elbow & \upelb \\
      & 1 & 2 & 3 & 4 & 5} &
    \pipes{%
      5 & \upelb \\
      4 & \cross & \upelb \\
      3 & \elbow & \elbow & \upelb \\
      2 & \cross & \elbow & \elbow & \upelb \\
      1 & \cross & \elbow & \cross & \elbow & \upelb \\
      & 1 & 2 & 3 & 4 & 5} &
    \pipes{%
      5 & \upelb \\
      4 & \elbow & \upelb \\
      3 & \elbow & \elbow & \upelb \\
      2 & \cross & \elbow & \cross & \upelb \\
      1 & \cross & \elbow & \cross & \elbow & \upelb \\
      & 1 & 2 & 3 & 4 & 5} 
    \end{array}
  \end{displaymath}
\caption{\label{fig:w-stand} The $3$ elements of $\QPD(24153)$ and their standardizations.}
\end{figure}

While the standardization map is neither injective nor surjective, it splits through quasi-Yamanouchi pipe dreams analogous to the case with tableaux. To make our result clear, we define a left inverse for standardization making use of \emph{virtual pipe dreams}, which are those allowed to have $\cross$'s below the $x$-axis. (We index rows below the $x$-axis by $0$, $-1$, $-2$, etc.)

\begin{definition}
  For $\sigma = s_{i_k} \cdots s_{i_1} \in \RD(w)$, let $\sit(\sigma)$ be the (virtual) pipe dream constructed as follows. Place a $\cross$ in the first column of row $i_1$. Assuming $\cross$'s have been placed for $i_1,\ldots,i_{j-1}$, if $i_j > i_{j-1}$, then place a $\cross$ in the same row and east of the most recently placed $\cross$ so that the row and column indices sum to $j+1$, and if $i_j<i_{j-1}$, then place a $\cross$ in the northernmost row south of the row of the most recently placed cross for which there exists a column such that the row index and column index sum to $j+1$.
  \label{def:w-sit}  
\end{definition}

Note that $\sit(\sigma)$ might indeed give a virtual pipe dream since one might run out of rows before the algorithm terminates and be forced to place a $\cross$ below the $x$-axis.

To help analyze when this happens, we define the following statistic on permutations,
\begin{equation}
  \eta(w) =  \inv(w) - \max(L(w)) +  \delta(w) - \min\{i \mid w_{i} > w_{i+1}\} %+\begin{cases} 1 & \text{if blah} \\  0 & \text{otherwise}\end{cases}
  \label{e:eta}
\end{equation}
where $\delta(w)=0$ if $\max(L(w))$ is attained at some position later than the first descent, and $\delta(w)=1$ otherwise. For example, $\eta(354162) = 8 - 3 + 1 - 2 = 4$. Note that $\eta(1^m \times w) = \eta(w)-m$. For example, $\eta(12576384) = 2$.

\begin{lemma}
  For any permutation $w$, there is a (virtual) quasi-Yamanouchi pipe dream for $w$ with a cross $\eta(w)$ rows below the $x$-axis, and no quasi-Yamanouchi pipe dream for $w$ has a cross in any row lower than this.
  \label{lem:lowest}
\end{lemma}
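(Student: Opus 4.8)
The plan is to route everything through the standardization map of Definition~\ref{def:w-stand}. Because $\sit$ is a left inverse of $\stand$, every (virtual) quasi-Yamanouchi pipe dream $P$ for $w$ equals $\sit(\stand(P))$, hence is of the form $\sit(\sigma)$ for some $\sigma\in\RD(w)$; conversely each $\sit(\sigma)$ is a virtual quasi-Yamanouchi pipe dream for $w$. So the lemma reduces to determining, as $\sigma$ runs over $\RD(w)$, the largest number of rows below the $x$-axis reached by $\sit(\sigma)$, and showing it equals $\eta(w)$.

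Step one is a closed formula from Definition~\ref{def:w-sit}. Write $\sigma=s_{i_N}\cdots s_{i_1}$ and put $v=(i_1,\dots,i_N)$. As the crosses are placed, the row of the most recently placed cross is weakly decreasing: it is fixed along an increasing run of $v$, and at a step with $i_j<i_{j-1}$ it drops from its current value $\rho$ to $\min(\rho-1,i_j)$. Unwinding this recursion, if the maximal increasing runs of $v$ have first entries $f_1,\dots,f_q$ in order, then the lowest row of $\sit(\sigma)$ is $\min_{1\le p\le q}\bigl(f_p-(q-p)\bigr)$, so (taking a maximizing index at a run start) $\sit(\sigma)$ reaches exactly
\[ D(\sigma):=\max_{1\le a\le N}\Bigl(\#\{\,j:\ a\le j<N,\ i_j>i_{j+1}\,\}+1-i_a\Bigr) \]
rows below the $x$-axis; the same description shows every below-axis cross of $\sit(\sigma)$ lies in a column recording an honest simple reflection and that $\sit(\sigma)$ is quasi-Yamanouchi, so this is the true count. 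The lemma becomes the identity $\max_{\sigma\in\RD(w)}D(\sigma)=\eta(w)$.

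The upper bound $D(\sigma)\le\eta(w)$ is the heart of the argument. Reading a maximizing run start back onto $P=\sit(\sigma)$, one gets $D(\sigma)=\nu(r)-(r+c_r-1)$, where $r$ is the row of that run in $P$, $c_r$ its westernmost column, and $\nu(r)$ the number of nonempty rows of $P$ weakly below row $r$. Since $\nu(r)$ equals $\inv(w)$ minus the crosses of $P$ strictly above row $r$ minus the surplus crosses of $P$ in rows weakly below $r$ (all but the first in each such row), and since $\eta(w)=\inv(w)-\max(L(w))+\delta(w)-d$ with $d=\min\{i:w_i>w_{i+1}\}$, the bound is equivalent to the following count, to be proved for every nonempty row $r$ of every (virtual) pipe dream of $w$: the crosses strictly above row $r$, plus the surplus crosses in rows weakly below $r$, plus $r+c_r-1$, is at least $\max(L(w))+d-\delta(w)$. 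I would establish this by checking it directly on the bottom pipe dream $P_{L(w)}$ — where it comes down to $\inv(w)=\sum_i L(w)_i$ together with the location of the first nonzero code entry relative to $d$ and the $\delta(w)$ correction — and then extending to all (virtual) pipe dreams of $w$ using that they are connected by local moves (cf.\ the proof of Lemma~\ref{lem:w-destand}), checking that no move can violate the inequality.

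For tightness I would write down an explicit $\sigma^\ast\in\RD(w)$, equivalently an explicit virtual quasi-Yamanouchi pipe dream, attaining $D(\sigma^\ast)=\eta(w)$: starting from $P_{L(w)}$, pack into column $1$ the crosses lying above the topmost row attaining $\max(L(w))$ and weakly left of the first descent, then cascade that long row together with all rows below it down a staircase, which overflows by exactly $\eta(w)$ rows past the $x$-axis; the outcome is reduced and quasi-Yamanouchi, and feeding its reading word into the displayed formula for $D$ gives equality. The main obstacle is the crossing count in the previous paragraph: it is precisely here that the exact shape of $\eta(w)$ must be matched on the nose — the subtraction of the first descent $d$ and the $\delta(w)$ correction recording whether $\max(L(w))$ is attained at, or strictly after, that descent — and the case where $\max(L(w))$ is attained exactly at the first descent is the delicate one. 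Once that estimate is in place, the explicit construction realizing equality is routine bookkeeping, the only subtlety being to check the cascaded virtual pipe dream is still reduced.
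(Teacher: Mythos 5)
Your reduction to reduced words is sound: every virtual quasi-Yamanouchi pipe dream is $\sit$ of its reading word, and your closed formula for the depth of $\sit(\sigma)$ (max over run starts of the number of later descents plus one minus the letter value, equivalently $\nu(r)-(r+c_r-1)$ at the westernmost cross of a nonempty row $r$) is correct. The gap is in how you propose to prove the upper bound. You convert it into the inequality ``crosses strictly above row $r$, plus surplus crosses weakly below $r$, plus $r+c_r-1$, is at least $\max(L(w))+d-\delta(w)$'' and assert it \emph{for every nonempty row of every (virtual) pipe dream of $w$}, so that you can verify it on $P_{L(w)}$ and push it through the local moves connecting pipe dreams. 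That universal statement is false. Take $w=321$, so $\eta(w)=3-2+1-1=1$ and your right-hand side is $2+1-1=2$. The placement with crosses at $(1,1)$, $(0,3)$, $(-1,3)$ has reading word $(1,2,1)$, a reduced word for $321$; it is the pipe dream $\{(3,1),(2,3),(1,3)\}$ of $1^2\times 321=12543$ shifted down two rows, hence a legitimate virtual pipe dream for $w$. For $r=1$ there are no crosses above, no surplus crosses (three nonempty rows, one cross each), and $r+c_r-1=1$, so your left-hand side is $1<2$; equivalently $\nu(r)-(r+c_r-1)=3-1=2>\eta(w)$. This pipe dream is of course not quasi-Yamanouchi (row $0$'s westernmost cross sits in column $3$ with no cross of row $1$ weakly east of it), so the lemma itself is safe, but your inequality was stated for all virtual pipe dreams precisely because local moves do not preserve the quasi-Yamanouchi condition --- and in that generality it fails, so the check-on-$P_{L(w)}$-and-propagate step cannot go through. (A secondary issue: connectivity of \emph{virtual} pipe dreams under local moves is not in the cited literature, though it could be arranged via $1^m\times w$.)

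The quasi-Yamanouchi hypothesis must enter the upper bound in an essential way, and this is exactly where the paper's proof does its real work: it shows the minimum number of increasing runs over all reduced words of $w$ is $\max(L(w))$ via the effect of simple transpositions on the Lehmer code; it uses that for $i$ less than the first descent $d$, every reduced word has an $s_i$ left of some $s_{i+1}$, so in a quasi-Yamanouchi pipe dream an empty row at or below row $d$ forces all crosses above it; and it normalizes an arbitrary quasi-Yamanouchi pipe dream by commutations to one with no empty rows between its first and last cross before comparing with the explicitly constructed extremal $\sit(\sigma)$. Your tightness construction is plausibly equivalent to the paper's explicit word $\sigma=\sigma' s_{k+1}\cdots s_j$, but without a correct quasi-Yamanouchi-aware argument for the upper bound the proof as proposed does not establish the lemma.
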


\begin{proof}
  For $P \in \QPD(w)$, the number of rows of $P$ with at least one $\cross$ is equal to one plus the number of descents of $\stand(P)$, read right to left. Let $\mathrm{des(P)}$ denote the number of descents of $\stand(P)$. Letting $\mathrm{pass}(P)$ denote the number of decreasing runs of $\stand(P)$, read right to left, we have $\mathrm{des}(P) + \mathrm{pass}(P) = \inv(w)$. In particular, $\mathrm{des}(P)$, and so, too, the number of rows of $P$ with at least one $\cross$, is maximized precisely when $\mathrm{pass}(P)$ is minimized.

  The affect of a simple transposition $s_i$ on the Lehmer code of $w$, assuming $\inv(s_i w) = \inv(w) - 1$, is to decrement $L(w)_{i}$ by $1$ and then swap this with $L(w)_{i+1}$. Therefore the minimum number of decreasing runs for a reduced decomposition of $w$ is $\max(L(w))$. When $\inv(s_i w) = \inv(w) - 1$, we necessarily have that $w_{i}>w_{i+1}$, and so $L(w)_{i} > L(w)_{i+1}$. Therefore this minimum is attained by the greedy bubble sort that begins by removing the rightmost descent of $w$ and continues by removing the next descent to the left of this until reaching the beginning of the word, then beginning again with the rightmost descent. Each pass decrements every positive value of the Lehmer code by $1$, so the number of passes is exactly $\max(L(w))$.

%  For example, for $w = 3574162$, the greedy bubble sort gives
%  \begin{displaymath}
%    3574162 \stackrel{s_1 s_2 s_3 s_4 s_6}{\longrightarrow} 1357426 \stackrel{s_5 s_4 s_3 s_2}{\longrightarrow} 1235746 \stackrel{s_5 s_4}{\longrightarrow} 1234576 \stackrel{s_6}{\longrightarrow} 1234567.
%  \end{displaymath}

  If $j$ is the position of the first descent of $w$, then in any reduced decomposition for $w$, $s_i$ occurs to the left of some $s_{i+1}$ for any $i<j$. In particular, for any $P \in \QPD(w)$, if a row at or below row $j$ has no $\cross$, then all $\cross$'s occur strictly above that row. Let $k$ be the position of the largest number to the left of the first descent that is smaller than the smaller of the pair of entries involved in the first descent. Let $w^{\prime} = s_{k+1} \ldots s_{j-1} s_j w$, and let $\sigma^{\prime}$ be the greedy bubble sort expression for $w^{\prime}$. Then $\sigma = \sigma^{\prime} s_{k+1} \ldots s_{j-1} s_j$ is a reduced decomposition for $w$, and $\sit(\sigma)$ has its lowest $\cross$ exactly $\eta(w)$ rows below the $x$-axis. Note if all occurrences of $\max(L(w))$ are to the left of the first descent, then $\max(L(w'))=\max(L(w))-1$ and $\delta(w)=1$; otherwise $\max(L(w'))=\max(L(w))$ and $\delta(w)=0$.
 
  To see that no quasi-Yamanouchi pipe dream for $w$ has a $\cross$ in any row lower than this, note that if some row, say row $i$, has a $\cross$, but row $i+1$ does not, then there is a $\cross$ in the first column of row $i$, by the quasi-Yamanouchi condition, and this $\cross$ corresponds to the simple transposition $s_i$. Furthermore, the simple transposition corresponding to any $\cross$ above row $i$ necessarily has the form $s_k$ with $k>i+1$, and so $s_k$ and $s_i$ commute. Therefore we obtain another quasi-Yamanouchi pipe dream for $w$ by moving all $\cross$'s above row $i$ to weakly below it, corresponding to commuting all simple transpositions occuring to the right of the first $s_i$ to occur to the left of it. Furthermore, the lowest $\cross$ in the new pipe dream is at least as low as the lowest $\cross$ in the original pipe dream. Iterating this process as necessary, we may assume that there is a quasi-Yamanouchi pipe dream for $w$, say $Q$, with lowest $\cross$ as low as possible such that no row between the first $\cross$ and the last $\cross$ is empty. In particular, the lowest $\cross$ for $Q$ sits in the row $\mathrm{des}(Q)$ rows below the first $\cross$. If the first $\cross$ of $Q$ is in row $j$, then $\mathrm{des}(Q) \leq \mathrm{des}(\sit(\sigma))$, so the last $\cross$ of $Q$ is at or above that of $\sit(\sigma)$. Otherwise, the first $\cross$ of $Q$ is at least one row higher than the first $\cross$ of $\sit(\sigma)$ but $\mathrm{des}(Q) \leq \mathrm{des}(\sit(\sigma))+1$, so, again, the last $\cross$ of $Q$ is at or above that of $\sit(\sigma)$.
\end{proof}

\begin{theorem}
  The standardization map is well-defined and satisfies the following:
  \begin{enumerate}
  \item for $P \in \PD(w)$, $\stand(P) \in \RD(w)$;
  \item for $P \in \QPD(w)$, $\flatten(\wt(P))=\Des(\stand(P))$;
  \item the restriction $\stand:\QPD(w) \rightarrow \RD(w)$ is injective; and
  \item the restriction $\stand:\QPD(w) \rightarrow \RD(w)$ is surjective if and only if $\eta(w) \leq 0$.
  \end{enumerate}
  \label{thm:w-stand}
\end{theorem}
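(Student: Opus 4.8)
The plan is to establish the four claims more or less in the order stated, leaning heavily on the structure already built: the destandardization lemma (Lemma~\ref{lem:w-destand}), the bijection underlying Theorem~\ref{thm:schubert-slide}, the statistic $\eta$, and Lemma~\ref{lem:lowest}. First, for~(1): reading the crosses of $P$ left to right, top to bottom and recording the cross at $(i,j)$ as $s_{i+j-1}$ produces a word of length $\inv(w)=\ell(w)$; I would argue this word is reduced and multiplies to $w$ by the standard pipe-dream-to-reduced-word correspondence (the reading order here is the one for which no two pipes crossing twice translates into no wasted letters), citing \cite{BJS93,FS94,BB93}. This is essentially bookkeeping, matching the crosses of $P$ to the inversions of $w$ via the antidiagonal index $i+j-1$.

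For~(2), I would compare the quasi-Yamanouchi condition on $P$ with the descent structure of the word $\stand(P)$. The reading order is row by row from the top, and within a row left to right; a new decreasing run (read right to left) of the index sequence begins exactly when one passes from one occupied row to a strictly lower one — and the quasi-Yamanouchi condition is precisely that the westernmost cross of each occupied row sits in column $1$ or weakly west of a cross in the row above. Translating: the positions where the run structure of $\stand(P)$ breaks line up with the row boundaries of $P$, and the length of the $r$th run equals the number of crosses in the $r$th (nonempty) row of $P$ from the top. Since $\flatten(\wt(P))$ records exactly the nonzero row-cross-counts in order, and $\Des(\stand(P))$ records exactly the run lengths, these two compositions coincide. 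I expect~(2) to require the most careful argument of the four, since one must verify that within a single occupied row the recorded letters $s_{i+j-1}$ are strictly increasing (so the row contributes exactly one run, not several), and that the quasi-Yamanouchi condition is exactly what prevents a spurious run break at a row boundary.

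For~(3), I would use~(2) together with the bijectivity already proved in Theorem~\ref{thm:schubert-slide}. If $P, Q \in \QPD(w)$ with $\stand(P)=\stand(Q)$, then by~(2) they have the same flattened weight, i.e. the same nonzero row-cross-counts read in order. But a quasi-Yamanouchi pipe dream is determined by its reduced word together with the knowledge of which rows are occupied: knowing $\stand(P)$ tells us the antidiagonal index $i+j-1$ of each cross in reading order, and the quasi-Yamanouchi/run-break data tells us when we descend to a new row; combined with the fact (used in the proof of Lemma~\ref{lem:w-destand}) that within a row crosses are placed as far west as the reduced condition allows, one recovers $(i,j)$ for every cross. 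Hence $P=Q$. (Alternatively: $\stand$ followed by $\sit$ is the identity on $\QPD(w)$, which I would verify directly from Definitions~\ref{def:w-stand} and~\ref{def:w-sit}, giving $\sit$ as the promised left inverse and immediately yielding injectivity.)

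For~(4), the surjectivity criterion, I would argue: $\stand:\QPD(w)\to\RD(w)$ is surjective iff for every $\sigma\in\RD(w)$ the virtual pipe dream $\sit(\sigma)$ is in fact an honest pipe dream, i.e. has no cross below the $x$-axis. By Lemma~\ref{lem:lowest}, there is a quasi-Yamanouchi pipe dream for $w$ whose lowest cross lies exactly $\eta(w)$ rows below the $x$-axis, and none lower. If $\eta(w)\le 0$, every quasi-Yamanouchi virtual pipe dream for $w$ already lives in the first quadrant, so $\sit$ lands in $\QPD(w)$ and, being a section of $\stand$, witnesses surjectivity. Conversely, if $\eta(w)>0$, the reduced decomposition $\sigma = \sigma'\, s_{k+1}\cdots s_{j-1} s_j$ constructed in the proof of Lemma~\ref{lem:lowest} has $\sit(\sigma)$ with a cross strictly below the $x$-axis, and no honest quasi-Yamanouchi pipe dream for $w$ can map to it under $\stand$ (any such would have to agree with $\sit(\sigma)$ by~(3) and the left-inverse property, forcing a cross below the axis — a contradiction); hence $\sigma$ is not in the image and $\stand$ is not surjective. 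The main obstacle here is being precise about the equivalence ``$\stand$ surjective $\iff$ every $\sit(\sigma)$ is honest,'' which rests on $\sit$ being a genuine two-sided-on-its-domain inverse of $\stand$ restricted to quasi-Yamanouchi (virtual) pipe dreams; once that is nailed down, Lemma~\ref{lem:lowest} does the rest.
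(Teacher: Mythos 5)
Your proposal follows the paper's proof essentially step for step: (1) via the standard pipe-dream/reduced-word correspondence, (2) by observing that the quasi-Yamanouchi condition forces the descents of $\stand(P)$ to occur exactly at the row boundaries, (3) via $\sit$ as a left inverse of $\stand$, and (4) by combining Lemma~\ref{lem:lowest} with that left-inverse property, exactly as the paper does. One caveat: for (3) you should rely on your parenthetical argument ($\sit\circ\stand=\mathrm{id}$ on $\QPD(w)$, which is precisely the paper's argument), because your primary argument invokes the claim that crosses of a quasi-Yamanouchi pipe dream sit as far west as possible within each row, which is false in general (see the first element of Figure~\ref{fig:QPD}) and is not what the proof of Lemma~\ref{lem:w-destand} establishes.
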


\begin{proof}
  Reading the $\cross$'s in the specified order always removes an adjacent inversion for $w$ proving (1). For (2), the quasi-Yamanouchi condition precisely gives that when reading the $\cross$'s left to right, top to bottom, each new (nonempty) row must begin with a lower index than the previous row ended with, and reading along a row increases indices. Therefore the descent composition is exactly the lengths of the nonempty rows. For (3), note that $\sit(\sigma)$ necessarily satisfies the quasi-Yamanouchi condition, and $\sit:\RD(w) \rightarrow \QPD(w)$ is a left inverse for standardization. Finally, (4) follows from Lemma~\ref{lem:lowest}, since there are no virtual quasi-Yamanouchi pipe dreams precisely when $\eta(w) \leq 0$.
\end{proof}

\begin{remark}
For $w$ Grassmannian, say $w = v(\lambda,n)$, we have $\inv(w) = |\lambda|$, $\max(L(w)) = \lambda_1$ with the unique maximum occuring at the unique descent, and $\min\{i \mid w_{i} > w_{i+1}\} = n$. Therefore $\eta(w) = |\lambda|-\lambda_1+1-n$. In particular, the standardization map on tableaux is surjective if and only if $n \geq |\lambda|-\lambda_1+1$.
\end{remark}

\begin{lemma}
  If $\sigma,\tau \in RD(w)$ and $\sigma$ differs from $\tau$ by a single commutativity relation or by a single braid relation, then the lowest cross of $\sit(\sigma)$ and the lowest cross of $\sit(\tau)$ lie in rows at most one apart.
  \label{lem:w-monotone}
\end{lemma}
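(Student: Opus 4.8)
The plan is to reduce the geometric claim to an arithmetic estimate for a single integer attached to each reduced word. For $\sigma = s_{i_k}\cdots s_{i_1}\in\RD(w)$ let $a = (a_1,\dots,a_k) = (i_1,\dots,i_k)$ be the sequence of subscripts in the order they are processed in Definition~\ref{def:w-sit}. The cross placed at step $j$ of $\sit(\sigma)$ has its row and column summing to $a_j+1$ (equivalently, it lies on diagonal $a_j$, the property that makes $\sit$ a one-sided inverse of $\stand$), so if it sits in row $r_j$ then its column $a_j+1-r_j\ge 1$, giving the invariant $r_j\le a_j$; moreover the construction gives $r_1 = a_1$, $r_j = r_{j-1}$ when $a_j > a_{j-1}$, and $r_j = \min(r_{j-1}-1,\,a_j)$ when $a_j < a_{j-1}$. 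In particular the rows $r_1\ge r_2\ge\cdots\ge r_k$ are weakly decreasing along the process, so the lowest cross of $\sit(\sigma)$ is the last one placed and occupies row $r_k$; write $\mathrm{lr}(\sigma):=r_k$. The lemma is then exactly $|\mathrm{lr}(\sigma)-\mathrm{lr}(\tau)|\le 1$.

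Next I would prove a closed formula, $\mathrm{lr}(\sigma) = \min_{1\le j\le k}\bigl(a_j - e_j\bigr)$, where $e_j = \#\{\,l : j\le l\le k-1,\ a_l>a_{l+1}\,\}$ is the number of descents of the word $a$ at positions $\ge j$. This follows by induction on $j$ from the recursion above, in the sharper form $r_j = \min_{l\le j}\bigl(a_l - \#\{\,m : l\le m<j,\ a_m>a_{m+1}\,\}\bigr)$: in the ascent case one uses $r_{j-1}\le a_{j-1}<a_j$ to absorb the new index $l=j$, and in the descent case every term indexed by $l\le j-1$ decreases by exactly $1$ while the new term $a_j$ appears, matching $\min(r_{j-1}-1,\,a_j)$. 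Taking $j=k$ and recalling that descent positions lie in $\{1,\dots,k-1\}$ yields the formula.

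With the formula in hand the lemma becomes a local perturbation estimate. A commutation move changes $a$ only by transposing two adjacent entries $x=a_m$, $y=a_{m+1}$ (with $|x-y|\ge 2$); a braid move changes $a$ only in three consecutive positions, from $(i,i+1,i)$ to $(i+1,i,i+1)$. Let $b$ be the resulting word and $W$ the block of changed positions. For $j$ strictly right of $W$ one has $a_j=b_j$ and $e_j(a)=e_j(b)$, so those terms are unchanged; for $j$ strictly left of $W$ one has $a_j=b_j$, and inspecting the descents at the two boundaries of $W$ and inside $W$ shows $|e_j(a)-e_j(b)|\le 1$; the finitely many terms with $j\in W$ are computed directly. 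In the commutation case the window terms pair up so that $x-e_m(a)$ is within $1$ of $x-e_{m+1}(b)$ and $y-e_{m+1}(a)$ is within $1$ of $y-e_m(b)$. In the braid case, writing $p=[\,i>a_{m+3}\,]$, $q=[\,i+1>a_{m+3}\,]$ (so $p\le q$, $q-p\in\{0,1\}$) and $D$ for the number of descents past the window, the window terms of $a$ are $\{i-1-p-D,\ i-p-D,\ i-p-D\}$ and those of $b$ are $\{i-q-D,\ i-q-D,\ i+1-q-D\}$, whose minima differ by $q-p-1\in\{-1,0\}$. Since $\mathrm{lr}$ is the minimum over the three regions (left of $W$, in $W$, right of $W$) and the minimum over each region moves by at most $1$, the same holds for $\mathrm{lr}$, so $|\mathrm{lr}(\sigma)-\mathrm{lr}(\tau)|\le 1$.

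The step I expect to be the main obstacle is the bookkeeping for the window, especially in the commutation case. The naive approach — running the two copies of $\sit$ in parallel and comparing rows step by step past $W$ — can fail, because just after $W$ the comparison that decides where the next cross is placed reads the \emph{altered} entry, so the two rows could a priori separate by more than $1$ there; it is precisely the invariant $r_j\le a_j$, packaged into the closed formula, that rules this out. For that reason I would route the whole argument through the formula rather than through the step-by-step dynamics of the construction.
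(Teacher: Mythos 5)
Your proof is correct, but it goes by a genuinely different route than the paper. The paper's proof works directly on the two (virtual) pipe dreams $P=\sit(\sigma)$ and $Q=\sit(\tau)$: it compares them cross by cross, noting they agree before the changed block, analyzing the block itself through subcases on the neighboring letters, and then arguing that any discrepancy of one row propagates only until the first empty row, where the two pipe dreams re-synchronize — exactly the step-by-step "dynamical" comparison you flagged as delicate, handled there by the absorption-at-the-first-empty-row argument. You instead extract from Definition~\ref{def:w-sit} the row recursion $r_1=a_1$, $r_j=r_{j-1}$ on ascents, $r_j=\min(r_{j-1}-1,a_j)$ on descents (using the correct reading that the cross for $s_{i_j}$ lies on the antidiagonal row $+$ column $=i_j+1$), prove the closed formula $\mathrm{lr}(\sigma)=\min_j\bigl(i_j-e_j\bigr)$ with $e_j$ the number of descents of the reading word at positions $\geq j$, and then bound the change of this minimum under a single commutation or braid move by splitting indices into left-of-window, window, and right-of-window and checking a handful of indicator inequalities (your window computations check out; the only potential $\pm2$ discrepancies are excluded by transitivity, and the boundary cases where the window abuts the start or end of the word are covered by treating out-of-range comparisons as vacuous, which you should state explicitly). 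What the paper's approach buys is finer positional information about which rows of $P$ and $Q$ carry crosses, which matches the geometric flavor of the surrounding arguments; what your approach buys is robustness and brevity — no synchronization claim is needed, the whole lemma reduces to arithmetic of descent counts — and the closed formula for the lowest row is of independent interest, since minimizing it over $\RD(w)$ reconnects with the statistic $\eta(w)$ of Lemma~\ref{lem:lowest}.
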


\begin{proof}
Let $P=\sit(\sigma)$ and $Q=\sit(\tau)$ be the corresponding (virtual) pipe dreams. Suppose $\sigma$ and $\tau$ differ by a braid relation, say $\sigma =s_{i_{\ell}} \ldots s_{j+1} s_{j} s_{j+1} \ldots s_{i_1}$ and $\tau=s_{i_{\ell}} \ldots s_j s_{j+1} s_j\ldots s_{i_1} $. In $P$, the three $\cross$'s of the braid occupy all but the top-right corner of a $2\times 2$ block, and in $Q$ the braid $\cross$'s are all but the bottom-left corner of the $2\times 2$ block either one unit south or one unit west of that for $P$. If the block is one unit west in $Q$, there is no change between $P$ and $Q$ as to which rows contain $\cross$'s and which do not. 
The block is one unit south only when the block in $P$ is flush left. If there is no $\cross$ following the three braid $\cross$'s, then the lowest row of $Q$ is one row lower than that of $P$. Otherwise, $\sigma = s_{i_{\ell}} \ldots s_k s_{j+1} s_{j} s_{j+1} \ldots s_{i_1}$. If $k<j-1$, then the cross for $s_k$ is at least two rows below the braid $\cross$'s in $P$, and so it and all subsequent $\cross$'s are in the same row in both $P$ and $Q$. If $b\ge j-1$, then all $\cross$'s from this one up to and including the last $\cross$ before the highest empty row of $P$ (below the braid $\cross$'s) are one row lower in $Q$ than in $P$, but any further $\cross$'s are in the same row of both $P$ and $Q$. Thus if the highest empty row of $P$ (below the braid $\cross$'s) has a $\cross$ below it, then the lowest $\cross$ of $P$ and $Q$ are in the same row, otherwise the lowest $\cross$ of $Q$ is one row lower than the lowest $\cross$ of $P$.
%e.g. the two QPDs for 1432. 

Now assume $\sigma$ and $\tau$ differ by a commutativity relation, say $\sigma = s_{i_{\ell}}\cdots s_{d} s_{c} s_{b} s_{a} \cdots s_{i_1}$ and $\tau = s_{i_{\ell}} \cdots s_{d} s_{b} s_{c} s_{a} \cdots s_{i_1}$, with $b-c \geq 2$. Let $\cross_{x}$ be the $\cross$ corresponding to $x=a,b,c,d$. Up to and including $\cross_a$, $P$ and $Q$ are identical. We claim that the index of the row of $\cross_d$ differs by at most one between $P$ and $Q$. It follows that the lowest $\cross$ of $P$ and lowest $\cross$ of $Q$ lie at most one row apart: if $\cross_d$ is in the same row of both $P$ and $Q$, then the same is true for all $\cross$'s following $\cross_d$. If $\cross_d$ is one row lower in say $P$, then all $\cross$'s from $\cross_d$ up to and including the last $\cross$ before the highest empty row of $Q$ (below $\cross_d$) are also one row lower in $P$, but any further $\cross$'s are in the same row of both $P$ and $Q$. Thus if the highest empty row of $Q$ (below $\cross_d$) has a $\cross$ below it, then the lowest $\cross$ of $P$ and $Q$ are in the same row, otherwise the lowest $\cross$ of $P$ is one row lower than the lowest $\cross$ of $Q$.
%If cross_d is one row lower in Q, then same argument with all Ps and Qs switched.

To see the claim, we consider two subcases based on $a$. If (A1) $a>c$, then $\cross_c$ is in the same row in both $P$ and $Q$, with $\cross_b$ above $\cross_c$ in $P$ and to the right of $\cross_c$ in $Q$. If (A2) $c>a$, then $\cross_c$ is one row lower in $P$ than in $Q$, but still with $\cross_b$ above $\cross_c$ in $P$ and to the right of $\cross_c$ in $Q$. Now consider three subcases based on $d$. If (B1) $c>d$, then $\cross_d$ is in some row below $\cross_c$ of $P$ and some row below $\cross_b$ of $Q$. If (A1) holds then this is the same row in both $P$ and $Q$, while if (A2) holds this is either the same row in both $P$ and $Q$ or one row lower in $P$. If (B2) $b>d>c$, then $\cross_d$ is in the same row as $\cross_c$ in $P$, and one row lower than $\cross_b$ in $Q$. If (A1) holds, then $\cross_d$ is one row lower in $Q$ than in $P$, while if (A2) holds $\cross_d$ is in the same row of both $P$ and $Q$. If (B3) $d>b$, then $\cross_d$ is in the same row as $\cross_c$ in $P$ and the same row as $\cross_b$ in $Q$. If (A1) holds, then $\cross_d$ is in the same row in both $P$ and $Q$, while if (A2) holds $\cross_d$ is one row lower in $P$ than in $Q$.
\end{proof}

\begin{theorem}
  For $w$ a permutation, if $\eta(w) \leq 0$, then $^{\#}\QPD(w)= ^{\#} \RD(w)$, and otherwise
  \begin{equation}
    0 < ^{\#} \QPD(w) < \cdots < ^{\#} \QPD(1^{\eta(w)} \times w) = \cdots = ^{\#} \RD(w).
  \end{equation}
  \label{thm:w-monotone}
\end{theorem}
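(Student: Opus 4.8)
The plan is to reduce everything to the standardization machinery of Theorem~\ref{thm:w-stand} together with two elementary pipe-dream manipulations. First I would record the structural consequences of the earlier results. Since $\PD(w)\neq\emptyset$ (for instance $P_{L(w)}$) and $\destand$ maps onto $\QPD(w)$ by Lemma~\ref{lem:w-destand}, we get $0<{}^{\#}\QPD(w)$. By Theorem~\ref{thm:w-stand}, $\stand$ is an injection $\QPD(v)\to\RD(v)$ for every $v$, and it is a bijection exactly when $\eta(v)\le 0$; moreover incrementing every simple-reflection index by $m$ is a bijection $\RD(w)\to\RD(1^m\times w)$, so ${}^{\#}\RD(1^m\times w)={}^{\#}\RD(w)$, and $\eta(1^m\times w)=\eta(w)-m$. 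Hence for all $m\ge\eta(w)$ we have $\eta(1^m\times w)\le 0$ and therefore ${}^{\#}\QPD(1^m\times w)={}^{\#}\RD(1^m\times w)={}^{\#}\RD(w)$. This settles the statement when $\eta(w)\le 0$, and supplies the terminal string of equalities ${}^{\#}\QPD(1^{\eta(w)}\times w)=\cdots={}^{\#}\RD(w)$ when $\eta(w)\ge 1$; it remains to prove the strict increases in that case.

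For weak monotonicity I would use the map $\Phi$ that shifts every $\cross$ of a pipe dream up one row; on reduced words this is the bijection incrementing every index, so $\Phi$ is an injection $\PD(v)\hookrightarrow\PD(1\times v)$. It carries $\QPD(v)$ into $\QPD(1\times v)$: the quasi-Yamanouchi condition comparing rows $i$ and $i+1$ of $P$ becomes the one comparing rows $i+1$ and $i+2$ of $\Phi(P)$, and the new bottom row of $\Phi(P)$ is empty, hence imposes no condition. Crucially, every pipe dream in the image of $\Phi$ has an empty bottom row. Iterating $\Phi$ therefore yields the weak chain ${}^{\#}\QPD(w)\le{}^{\#}\QPD(1\times w)\le\cdots\le{}^{\#}\QPD(1^{\eta(w)}\times w)$.

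To make the steps strict, fix $v=1^m\times w$ with $0\le m\le\eta(w)-1$, so that $\eta(1\times v)=\eta(w)-m-1\ge 0$. It suffices to exhibit a single $Q\in\QPD(1\times v)$ with a $\cross$ in its bottom row, for such a $Q$ lies outside the image of $\Phi$ and forces ${}^{\#}\QPD(v)<{}^{\#}\QPD(1\times v)$. Consider the function on $\RD(1\times v)$ sending $\sigma$ to the index of the row containing the lowest $\cross$ of the (possibly virtual) pipe dream $\sit(\sigma)$. By Lemma~\ref{lem:w-monotone} this function changes by at most $1$ across a single commutation or braid relation, and by the Tits/Matsumoto theorem $\RD(1\times v)$ is connected under these relations, so its image is an interval of integers. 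Its minimum equals $1-\eta(1\times v)\le 1$ by Lemma~\ref{lem:lowest} (every $\sit(\sigma)$ is a quasi-Yamanouchi virtual pipe dream, and the lemma both attains and bounds this row), and it attains a value $\ge 1$ because $\QPD(1\times v)$ is nonempty and $\sit$ is a left inverse of $\stand$ there; hence the value $1$ is attained. The corresponding $\sit(\sigma)$ is then an honest (non-virtual) quasi-Yamanouchi pipe dream for $1\times v$ whose lowest $\cross$ occupies row $1$, as required. Applying this for $m=0,1,\dots,\eta(w)-1$ and concatenating with the terminal equalities gives the displayed chain.

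I expect the strictness step to be the main obstacle: it is the only place requiring real input beyond the $\sit$/$\stand$ bookkeeping, namely the combination of Lemma~\ref{lem:w-monotone} (to see that the set of lowest-cross rows is an interval, which also invokes connectivity of reduced words under elementary moves) with Lemma~\ref{lem:lowest} (to locate the endpoints of that interval relative to row $1$). The remaining care is purely in the translation between pipe dreams with a nonempty bottom row and reduced words whose $\sit$ reaches row $1$, and in checking that $\Phi$ is well-defined as a map of quasi-Yamanouchi pipe dreams.
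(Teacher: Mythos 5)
Your proposal is correct and follows essentially the same route as the paper: the crucial strictness step rests on exactly the same ingredients — Lemma~\ref{lem:w-monotone} together with connectivity of $\RD$ under braid/commutation moves to show the rows of the lowest crosses of the $\sit$-images form an interval, with the endpoints located by Lemma~\ref{lem:lowest} and by the existence of honest quasi-Yamanouchi pipe dreams via Theorem~\ref{thm:w-stand}. The only cosmetic difference is your auxiliary shift map $\Phi$ for weak monotonicity, which in the paper's formulation is automatic since $^{\#}\QPD(1^m\times w)$ is identified with the number of reduced words whose $\sit$-diagram has its lowest cross above a threshold depending on $m$, and these sets are nested.
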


\begin{proof}
  Given $\sigma \in \RD(w)$, the position of the southernmost $\cross$ in $\sit(\sigma)$ precisely determines when $\sigma$ appears in the image of the standardization map for $1^m \times w$. Thus the theorem is equivalent to the statement that the rows of the southernmost $\cross$'s of the virtual pipe dreams corresponding to elements of $\RD(w)$ form an interval. Any element of $\RD(w)$ can be obtained from any other by a sequence of commutativity or braid relations. By Lemma~\ref{lem:w-monotone}, each step in the sequence changes the row of the lowest $\cross$ of the corresponding (virtual) pipe dream by at most one.
\end{proof}

For $m \geq \eta(w)$, the quasi-Yamanouchi pipe dreams are in bijection with reduced decompositions by Theorem~\ref{thm:w-stand}(3,4), and by Theorem~\ref{thm:w-stand}(2) the weights are the same. Thus we obtain our main result of this section, stating that, eventually and thereafter, the fundamental slide polynomial expansion of the Schubert polynomial flattens to the fundamental quasisymmetric expansion of the Stanley symmetric function. 

\begin{corollary}
  For any permutation $w$, let $\eta = \eta(w)$. Then, for any $m \geq \eta$, we have
  \begin{equation}
    \sch_{1^m \times w} = \sum_{a} [\Fund_{a} \mid \sch_{1^{\eta} \times w}]  \Fund_{0^{m-\eta}\times a} .
  \end{equation}
  In particular, taking the limit, we have
  \begin{equation}
    S_w=\lim_{m\rightarrow\infty} \sch_{1^m \times w} = \sum_{a} [\Fund_{a} \mid \sch_{1^{\eta} \times w}]  F_{\flatten(a)} (X).
  \end{equation}
  Moreover, this result is tight in the sense that if for some $n$ and for some $m>n$, we have
  \begin{equation}
    \sch_{1^m \times w} = \sum_{a} [\Fund_{a} \mid \sch_{1^{n} \times w}]  \Fund_{0^{m-n}\times a},
  \end{equation}
  then $n \geq \eta$.
  \label{cor:w-monotone}
\end{corollary}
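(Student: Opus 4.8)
The plan is to push everything down to the stabilized level $\eta:=\eta(w)$ (in the degenerate case $\eta(w)\le 0$ read $1^{\eta}\times w$ as $w$ and replace $\eta$ by $0$; the argument below is unchanged and the tightness conclusion $n\ge\eta(w)$ is then automatic), so assume $\eta=\eta(w)\ge 0$. \textbf{Step 1, a weight-shifting bijection.} For $m\ge\eta$ we have $\eta(1^m\times w)=\eta(w)-m\le 0$, so by Theorem~\ref{thm:w-stand}(3,4) the standardization $\stand\colon\QPD(1^m\times w)\to\RD(1^m\times w)$ is a bijection with inverse $\sit$, and likewise for $1^{\eta}\times w$, where $\eta(1^{\eta}\times w)=0$. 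A reduced word for $1^k\times w$ uses only simple reflections $s_i$ with $i>k$, so decreasing every index by $m-\eta$ is a bijection $\iota\colon\RD(1^m\times w)\to\RD(1^{\eta}\times w)$, and a direct induction on the construction in Definition~\ref{def:w-sit} shows that $\sit$ intertwines $\iota$ with translating the resulting (virtual) pipe dream down by $m-\eta$ rows. Hence $\phi_m:=\sit\circ\iota\circ\stand$ is a bijection $\QPD(1^m\times w)\to\QPD(1^{\eta}\times w)$; since $\phi_m(P)$ lives in rows $\ge 1$ and $P$ equals $\phi_m(P)$ translated up by $m-\eta$ rows, the bottom $m-\eta$ rows of $P$ are empty and $\wt(P)=0^{m-\eta}\times\wt(\phi_m(P))$.

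\textbf{Step 2, the expansions.} Feeding Step 1 into Theorem~\ref{thm:schubert-slide} gives $\sch_{1^m\times w}=\sum_{P\in\QPD(1^m\times w)}\Fund_{\wt(P)}=\sum_{Q\in\QPD(1^{\eta}\times w)}\Fund_{0^{m-\eta}\times\wt(Q)}$; grouping by $\wt(Q)=a$ and using Theorem~\ref{thm:schubert-slide} once more to identify $[\Fund_a\mid\sch_{1^{\eta}\times w}]$ with $\#\{Q\in\QPD(1^{\eta}\times w):\wt(Q)=a\}$ (legitimate since the $\Fund_b$ are linearly independent, Theorem~\ref{thm:basis-fund}) rewrites this as $\sum_a[\Fund_a\mid\sch_{1^{\eta}\times w}]\,\Fund_{0^{m-\eta}\times a}$, which is the first claim. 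The sum is finite with $m$-independent coefficients, so the limit passes inside, and Theorem~\ref{thm:stable-limit} (noting $\flatten(0^{m-\eta}\times a)=\flatten(a)$) gives $\lim_m\Fund_{0^{m-\eta}\times a}=F_{\flatten(a)}(X)$, whence $S_w=\sum_a[\Fund_a\mid\sch_{1^{\eta}\times w}]\,F_{\flatten(a)}(X)$. Via Theorem~\ref{thm:w-stand}(2) this right-hand side equals $\sum_{\sigma\in\RD(w)}F_{\Des(\sigma)}(X)=\sta_w(X)$, so as a byproduct this reproves Theorem~\ref{thm:schub-limit}.

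\textbf{Step 3, tightness.} Suppose $\sch_{1^m\times w}=\sum_a[\Fund_a\mid\sch_{1^n\times w}]\,\Fund_{0^{m-n}\times a}$ for some $m>n$. Summing all $\Fund$-coefficients on each side: the left side $\sum_{P\in\QPD(1^m\times w)}\Fund_{\wt(P)}$ has nonnegative coefficients summing to $\#\QPD(1^m\times w)$, while on the right $a\mapsto 0^{m-n}\times a$ is injective and the coefficients $[\Fund_a\mid\sch_{1^n\times w}]$ are nonnegative and sum to $\#\QPD(1^n\times w)$; hence $\#\QPD(1^m\times w)=\#\QPD(1^n\times w)$. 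But Theorem~\ref{thm:w-monotone} (applied to $w$) says $\#\QPD(1^k\times w)$ is weakly increasing in $k$ and strictly increasing while $k<\eta$; if $n<\eta$ this forces $\#\QPD(1^n\times w)<\#\QPD(1^{n+1}\times w)\le\#\QPD(1^m\times w)$, a contradiction, so $n\ge\eta$.

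\textbf{The main obstacle} is Step~1: almost all the work was already absorbed into Lemma~\ref{lem:lowest} and Theorem~\ref{thm:w-monotone}, and what remains is the verification that $\sit$ commutes with the global index shift $\iota$. That compatibility is the hinge — it is exactly what turns the qualitative stabilization of Theorem~\ref{thm:w-monotone} into the explicit coefficient identity, and it is what produces the clean weight relation $\wt(P)=0^{m-\eta}\times\wt(\phi_m(P))$ that makes the bijection of Step~1 usable inside the $\Fund$-expansion.
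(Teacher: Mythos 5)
Your proof is correct and takes essentially the same route as the paper's: the bijection between $\QPD(1^m\times w)$ and $\RD(1^m\times w)$ for $m\ge\eta$ coming from Theorem~\ref{thm:w-stand}(3,4), part (2) for the weights, and Theorem~\ref{thm:w-monotone} for the tightness claim. Your Step~1 (the row-translation equivariance of $\sit$ under the index shift, yielding the exact relation $\wt(P)=0^{m-\eta}\times\wt(\phi_m(P))$ rather than just equality of flattened weights) and the coefficient-sum argument in Step~3 simply spell out details the paper leaves implicit.
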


For example, we have
\begin{eqnarray*}
  \sch_{24153} & = & \Fund_{(1,2,0,1)} + \Fund_{(2,1,0,1)} + \Fund_{(2,2,0,0)},  \\
  \sch_{135264} & = & \Fund_{(0,1,2,0,1)} + \Fund_{(0,2,1,0,1)} + \Fund_{(0,2,2,0,0)}+ \Fund_{(1,1,2,0,0)} + \Fund_{(1,2,1,0,0)}, \\
  \sch_{1246375} & = & \Fund_{(0,0,1,2,0,1)} + \Fund_{(0,0,2,1,0,1)} + \Fund_{(0,0,2,2,0,0)}+ \Fund_{(0,1,1,2,0,0)} + \Fund_{(0,1,2,1,0,0)}, \\
  & \vdots & \\
  \sta_{24153}(X) & = & F_{(1,2,1)}(X) + F_{(2,1,1)}(X) + F_{(2,2)}(X) + F_{(1,1,2)}(X) + F_{(1,2,1)}(X).
\end{eqnarray*}
Notice that $F_{(1,2,1)}(X)$ occurs with multiplicity $2$ in $\sta_{24153}(X)$ even though the expansions of the corresponding Schubert polynomials are multiplicity-free. One term appears immediately in $\sch_{24153}$, and the other first appears in $\sch_{1 \times 24153}$.

Combining Theorem~\ref{thm:stable-limit} and Theorem~\ref{thm:w-stand}(2), we obtain Theorem~\ref{thm:schub-limit} as a corollary.

%%%%%%%%%%%%%%%%%%%%%%%%%%%%%%%%%%%%%%%%%%%%%%%%%%%%%%%%%%%%%%%%
%
\section{Structure constants}
%
%%%%%%%%%%%%%%%%%%%%%%%%%%%%%%%%%%%%%%%%%%%%%%%%%%%%%%%%%%%%%%%%
\label{sec:struct}

%%%%%%%%%%%%%%%%%%%%%%%%%%%%%%%%%%%%%%%%%%%%%%%%%%%%%%%%%%%%%%%%
\subsection{Quasi-slide product}
%%%%%%%%%%%%%%%%%%%%%%%%%%%%%%%%%%%%%%%%%%%%%%%%%%%%%%%%%%%%%%%%
\label{sec:struct-quasi}

The utility of Schubert polynomials lies in the fact that they represent the Schubert classes of the flag variety, and so the structure constants of the Schubert polynomial basis enumerate points in generic triple intersections of Schubert subvarieties of the flag variety. To begin to understand these constants, we first give a combinatorial formula for the structure constants for slide polynomials, beginning with the monomial slide basis. This we do by generalizing the quasi-shuffle product of Hoffman \cite{Hof00}.

\begin{definition}[\cite{Hof00}]
  The \emph{quasi-shuffle product} of weak compositions $\alpha$ and $\beta$, denoted by $\alpha \qshuffle \beta$, is defined recursively by
  \begin{eqnarray*}
    \alpha \qshuffle \emptyset & = & \emptyset \qshuffle \alpha = \alpha, \\
    \alpha \qshuffle \beta & = & \alpha_1 (\alpha_2\cdots \alpha_{\ell(\alpha)} \qshuffle \beta) + \beta_1 (\alpha \qshuffle \beta_2\cdots \beta_{\ell(\beta)}) + [\alpha_1,\beta_1] (\alpha_2\cdots \alpha_{\ell(\alpha)} \qshuffle \beta_2\cdots \beta_{\ell(\beta)}),
  \end{eqnarray*}
  where $\emptyset$ is the empty composition, and $[\alpha_1,\beta_1]$ denotes the integer $\alpha_1+\beta_1$.
  \label{def:q-shuffle}
\end{definition}

For example, we have
\begin{displaymath}
  23 \qshuffle 11 = 2311 + 2131 + 2113 + 214 + 241 + 1231 + 1213 + 124 + 1123 + 331 + 313 + 34.
\end{displaymath}

\begin{remark}
  In what follows, we will assume weak compositions have the same length. If not, say $a$ has length $n$ and $b$ has length $m$, with $n>m$, then we may replace $b$ with $b\times 0^{n-m}$.
\end{remark}
  
\begin{definition}
  Let $a,b$ be weak compositions of length $n$. Let $\alpha = \flatten(a)$ and $\beta=\flatten(b)$. The \emph{quasi-shuffle set of $a$ and $b$}, denoted by $\QSS(a,b)$, is given by
  \begin{equation}
    \QSS(a,b) = \left\{ (\gamma_a, \gamma_b) \mid \begin{array}{ll}
      \flatten(\gamma_a)=\flatten(a), & \gamma_a \geq a, \\ 
      \flatten(\gamma_b)=\flatten(b), & \gamma_b \geq b,
    \end{array} \mbox{and } (\gamma_a+\gamma_b)_i>0 \mbox{ for all } i \right\}.
    \end{equation}
  \label{def:QSS}
\end{definition}

For example, writing $(\gamma_a,\gamma_b)$ as $\gamma_a+\gamma_b$, we have
\begin{displaymath}
  \QSS((0,2,0,3),(1,0,0,1)) = \left\{ \begin{array}{cc}
    (0,2,3)+(1,0,1) & (0,2,3)+(1,1,0) \\
    (2,0,3)+(1,1,0) & (2,3,0)+(1,0,1) \\
    (0,2,0,3)+(1,0,1,0) & (2,3)+(1,1) \\
    (0,2,3,0)+(1,0,0,1) & \\
  \end{array} \right\}.
\end{displaymath}

For a composition $c$ such that $\flatten(c) = \gamma_a+\gamma_b$, let $c = c_a + c_b$ be the unique decomposition such that $\flatten(c_a)=\gamma_a$ and $\flatten(c_b)=\gamma_b$.

\begin{definition}
  For weak compositions $a$ and $b$ of length $n$, define the \emph{quasi-slide product of $a$ and $b$}, denoted by $a \qslide b$, to be the formal sum of weak compositions defined by
  \begin{equation}
    a \qslide b = \sum_{(\gamma_a,\gamma_b) \in \QSS(a,b)} \bump_{(a,b)}(\gamma_a,\gamma_b),
  \end{equation}
  where $\bump_{(a,b)}(\gamma_a,\gamma_b)$ is the unique composition $c$ with $\flatten(c)= \gamma_a+\gamma_b$ such that $c_a \geq a$ and $c_b \geq b$ and if $\flatten(d) = \gamma_a+\gamma_b$ satisfies $d_a \geq a$ and $d_b \geq b$, then $d \geq c$.
  \label{def:quasi-slide}
\end{definition}

Continuing with our example, we have
\begin{eqnarray*}
  (0,2,0,3) \qslide (1,0,0,1) & = & (1,2,0,4) + (1,2,1,3) + (1,3,0,3) + (3,0,0,4) \\
  & & (3,0,1,3) + (1,2,3,1) + (3,0,3,1)
\end{eqnarray*}

The quasi-slide product is easily seen to be commutative and associative.

\begin{theorem}
  For weak compositions $a$ and $b$ of length $n$, we have
  \begin{equation}
    \Mono_{a} \Mono_{b} = \sum_{c} [c \mid a \qslide b] \Mono_{c},
  \end{equation}
  where $[c \mid a \qslide b]$ means the coefficient of $c$ in the quasi-slide product $a \qslide b$. 
  \label{thm:quasi-slide}
\end{theorem}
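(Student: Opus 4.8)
The plan is to expand $\Mono_a\Mono_b$ into monomials, to partition those monomials into groups indexed by $\QSS(a,b)$, and to recognize each group as a single monomial slide polynomial, namely $\Mono_{\bump_{(a,b)}(\gamma_a,\gamma_b)}$. By Definition~\ref{def:monomial}, $\Mono_a\Mono_b=\sum x^{a'+b'}$, the sum over all pairs $(a',b')$ of weak compositions of length $n$ with $a'\ge a$, $\flatten(a')=\flatten(a)$, $b'\ge b$, $\flatten(b')=\flatten(b)$. To each such pair I will attach a \emph{merge type} $(\gamma_a,\gamma_b)$: writing $d=a'+b'$ and letting $p_1<\cdots<p_\ell$ be the positions of the nonzero entries of $d$, set $(\gamma_a)_j=a'_{p_j}$ and $(\gamma_b)_j=b'_{p_j}$. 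The first thing to verify is that $(\gamma_a,\gamma_b)\in\QSS(a,b)$: the flattening conditions are immediate, $\gamma_a+\gamma_b=\flatten(d)$ has all entries positive by construction, and $\gamma_a$ is obtained from $a'$ by deleting entries that are forced to be $0$ (those outside the support of $d$), so $\gamma_a\ge a'\ge a$ since deleting zero entries only raises partial sums; symmetrically $\gamma_b\ge b$. I would also note that these inequalities force $\ell\le n$ (a nonzero entry of $\gamma_a$ or $\gamma_b$ beyond position $n$ would make its $n$th partial sum strictly less than $|a|$ or $|b|$), so that $\Mono_{\bump_{(a,b)}(\gamma_a,\gamma_b)}$ is genuinely a polynomial in $x_1,\dots,x_n$. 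This lets me rewrite $\Mono_a\Mono_b=\sum_{(\gamma_a,\gamma_b)\in\QSS(a,b)}\ \sum_{(a',b')\text{ of merge type }(\gamma_a,\gamma_b)}x^{a'+b'}$.

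Next I fix $(\gamma_a,\gamma_b)\in\QSS(a,b)$, set $\gamma=\gamma_a+\gamma_b$ and $c=\bump_{(a,b)}(\gamma_a,\gamma_b)$, and show the inner sum equals $\Mono_c$. A pair $(a',b')$ of merge type $(\gamma_a,\gamma_b)$ is recovered from $d=a'+b'$ by distributing the entries of $\gamma_a$ (resp.\ $\gamma_b$) in order into the support of $d$ and filling the remaining positions with $0$; this is precisely the decomposition $d=d_a+d_b$ of Definition~\ref{def:quasi-slide}, and under it the conditions $a'\ge a$ and $b'\ge b$ become $d_a\ge a$ and $d_b\ge b$. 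Thus it suffices to prove the set equality
\[
  \{\,d:\flatten(d)=\gamma,\ d_a\ge a,\ d_b\ge b\,\}\;=\;\{\,d:\flatten(d)=\gamma,\ d\ge c\,\}.
\]
The inclusion $\subseteq$ is exactly the minimality clause in the definition of $c=\bump_{(a,b)}(\gamma_a,\gamma_b)$.

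For the reverse inclusion $\supseteq$ I plan to use the following elementary observation: if $\flatten(d)=\flatten(d')=\gamma$ and $d\ge d'$, then for every $k$ the number of nonzero entries of $d$ among its first $k$ positions is at least that of $d'$. Indeed, the first-$k$ partial sum of any weak composition with flattening $\gamma$ equals $\gamma_1+\cdots+\gamma_r$, where $r$ is its number of nonzero entries in those positions, and since every $\gamma_i>0$ a larger partial sum forces a larger $r$. Applying this to $d\ge c$: for each $k$, $d$ has at least as many nonzero entries in its first $k$ positions as $c$ does; but the first-$k$ partial sum of $d_a$ is the initial sum of the nonnegative sequence $\gamma_a$ up to this count, so it is at least the corresponding initial sum for $c_a$. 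Hence $d_a\ge c_a\ge a$, and symmetrically $d_b\ge c_b\ge b$, so $d$ lies in the left-hand set. Granting the set equality, the inner sum over merge type $(\gamma_a,\gamma_b)$ is $\sum_{\flatten(d)=\gamma,\ d\ge c}x^d=\Mono_c$, and summing over $\QSS(a,b)$ and comparing with Definitions~\ref{def:QSS} and~\ref{def:quasi-slide} gives $\Mono_a\Mono_b=\sum_{(\gamma_a,\gamma_b)\in\QSS(a,b)}\Mono_{\bump_{(a,b)}(\gamma_a,\gamma_b)}=\sum_c[c\mid a\qslide b]\,\Mono_c$.

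The main obstacle, I expect, is the $\supseteq$ direction of the displayed set equality — equivalently, the statement that the conditions $d_a\ge a$, $d_b\ge b$ cut out an up-set in dominance among weak compositions of fixed flattening — and getting the bookkeeping right between dominance of $d$, the nonzero-prefix-counts of $d$, and dominance of the pieces $d_a,d_b$. Everything else (the merge-type bijection, the bound $\ell\le n$, and the compatibility with the definition of the quasi-slide product) is routine once this is in hand; one can moreover observe that this up-set property, together with the nonemptiness of the left-hand set above, furnishes an independent proof that $\bump_{(a,b)}$ is well defined.
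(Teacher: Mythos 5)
Your proof is correct and takes essentially the same route as the paper's (much terser) argument: expand $\Mono_a\Mono_b$ over pairs $(a',b')$, group the monomials by the induced element of $\QSS(a,b)$, and use the minimality clause in the definition of $\bump_{(a,b)}$ to identify each group with a single $\Mono_c$; your prefix-count argument for the inclusion $\supseteq$ supplies exactly the detail the paper leaves implicit in ``by taking $\bump_{(a,b)}$ minimal.'' One caveat: the closing aside that the up-set property plus nonemptiness yields well-definedness of $\bump_{(a,b)}$ overstates the case, since a nonempty up-set in a partial order need not have a least element (one needs, e.g., closure of the constraint set under the dominance meet, i.e.\ componentwise minima of supports, or an explicit greedy construction), but neither your main argument nor the paper's proof relies on that aside.
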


\begin{proof}
  From the definition of $\Mono_a$, we have $\Mono_{a} \Mono_{b} = \sum_{(a^{\prime},b^{\prime})} x^{a^{\prime} + b^{\prime}}$, where the sum is over all pairs $(a^{\prime},b^{\prime})$ such that $a^{\prime} \geq a$, $\flatten(a^{\prime})=\flatten(a)$, and $b^{\prime} \geq b$, $\flatten(b^{\prime})=\flatten(b)$. By taking $\bump_{(a,b)}(c)$ minimal, we collect together monomials occuring in a single monomial slide polynomial. 
\end{proof}

Using Theorem~\ref{thm:quasi-slide} and Theorem~\ref{thm:stable-limit} to take the stable limit, we obtain a result of Hoffman \cite{Hof00} that the quasi-shuffle product on (strong) compositions gives the structure constants for the monomial quasisymmetric functions.

\begin{corollary}[\cite{Hof00}]
  For (strong) compositions $\alpha$ and $\beta$, we have
  \begin{equation}
    M_{\alpha}(X) M_{\beta}(X) = \sum_{\gamma} [\gamma \mid \alpha \qshuffle \beta] M_{\gamma}(X),
    \label{e:quasi-shuffle}
  \end{equation}
  where $[\gamma \mid \alpha \qshuffle \beta]$ means the coefficient of $\gamma$ in the quasi-shuffle product $\alpha \qshuffle \beta$.
  \label{cor:quasi-shuffle}
\end{corollary}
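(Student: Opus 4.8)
I would deduce the identity from its polynomial refinement, Theorem~\ref{thm:quasi-slide}, by passing to the stable limit via Theorem~\ref{thm:stable-limit}, exactly as the two stated ingredients suggest.

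Regarding the strong compositions $\alpha,\beta$ as weak compositions, Theorem~\ref{thm:stable-limit} (with $\flatten(\alpha)=\alpha$) gives $\lim_{m\to\infty}\Mono_{0^m\times\alpha}=M_\alpha(X)$ and likewise $\lim_{m\to\infty}\Mono_{0^m\times\beta}=M_\beta(X)$. Multiplication is continuous for the inverse-limit topology, so $M_\alpha(X)M_\beta(X)=\lim_{m\to\infty}\big(\Mono_{0^m\times\alpha}\cdot\Mono_{0^m\times\beta}\big)$. Padding the shorter of $0^m\times\alpha$, $0^m\times\beta$ with trailing zeros — harmless by Theorem~\ref{thm:slide-stable} — I may assume equal length and invoke Theorem~\ref{thm:quasi-slide}: for every $m$,
$$\Mono_{0^m\times\alpha}\cdot\Mono_{0^m\times\beta}=\sum_c [c\mid (0^m\times\alpha)\qslide(0^m\times\beta)]\,\Mono_c.$$

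It remains to take $m\to\infty$ inside this sum, which is the heart of the argument. I would show that for $m$ sufficiently large the quasi-slide product stabilizes in two senses. First, the quasi-shuffle set $\QSS(0^m\times\alpha,0^m\times\beta)$ becomes independent of $m$: since $0^m\times\alpha$ and $0^m\times\beta$ carry their nonzero parts at the far right, the dominance conditions $\gamma_a\ge 0^m\times\alpha$, $\gamma_b\ge 0^m\times\beta$ hold automatically for every $\gamma_a,\gamma_b$ with $\flatten(\gamma_a)=\alpha$, $\flatten(\gamma_b)=\beta$ and $\gamma_a+\gamma_b$ of full support; moreover $(\gamma_a,\gamma_b)\mapsto\gamma_a+\gamma_b$ is a bijection of multisets from this stabilized set onto $\alpha\qshuffle\beta$, because such a pair records precisely a legal interleaving-with-merging of $\alpha$ and $\beta$ (each nonzero position of $\gamma:=\gamma_a+\gamma_b$ carries $\alpha_i$ only, $\beta_j$ only, or $\alpha_i+\beta_j$, with the parts of $\alpha$ and of $\beta$ occurring in order) — exactly the data enumerated by Definition~\ref{def:q-shuffle}, with matching multiplicities. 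Second, for each such pair the bump stabilizes up to prepended zeros: by definition $\bump_{(0^m\times\alpha,0^m\times\beta)}(\gamma_a,\gamma_b)$ is the dominance-minimal weak composition $c$ with $\flatten(c)=\gamma$ subject to $c_a\ge 0^m\times\alpha$ and $c_b\ge 0^m\times\beta$, and a short computation shows these last conditions say exactly that the position of $c$ holding the $k$-th part of $\alpha$ (in $c=c_a+c_b$) is $\le m+k$, and similarly for $\beta$. Since $m$ enters these bounds only as an additive constant, the dominance-minimal $c$ for parameter $m+1$ is obtained from the one for parameter $m$ by prepending a single zero; hence for $m$ large it equals $0^{m-m_0}\times c_0$ for a fixed weak composition $c_0$ with $\flatten(c_0)=\gamma$. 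Applying Theorem~\ref{thm:stable-limit} once more, $\lim_{m\to\infty}\Mono_{\bump_{(0^m\times\alpha,0^m\times\beta)}(\gamma_a,\gamma_b)}=M_\gamma(X)$.

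Combining these term by term,
$$M_\alpha(X)M_\beta(X)=\sum_{(\gamma_a,\gamma_b)\in\QSS(\alpha,\beta)} M_{\gamma_a+\gamma_b}(X)=\sum_\gamma [\gamma\mid\alpha\qshuffle\beta]\,M_\gamma(X),$$
as claimed. I expect the main obstacle to be the second stabilization — pinning down the support of the bump under the slide constraints and confirming that it stabilizes to a fixed composition up to leading zeros — though once one observes that the slide constraints depend on $m$ only through the additive shift, this becomes tractable. The multiset bijection in the first point is, by contrast, essentially a translation of Hoffman's recursion into the language of $\QSS$ and should go through routinely once the dictionary between the "colored slots" of $\gamma_a+\gamma_b$ and the terms of the quasi-shuffle is made explicit.
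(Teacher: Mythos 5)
Your proposal is correct and follows essentially the same route as the paper, which derives Corollary~\ref{cor:quasi-shuffle} precisely by combining Theorem~\ref{thm:quasi-slide} with the stable limit of Theorem~\ref{thm:stable-limit} (the paper leaves the details implicit). Your filled-in details---that for large $m$ the dominance conditions in $\QSS(0^m\times\alpha,0^m\times\beta)$ become vacuous so the pairs $(\gamma_a,\gamma_b)$ biject (with multiplicity) onto the terms of $\alpha\qshuffle\beta$, and that the bump's defining constraints shift uniformly with $m$ so $\Mono_{\bump}$ converges to $M_{\gamma_a+\gamma_b}(X)$---are sound and consistent with the intended argument.
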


%%%%%%%%%%%%%%%%%%%%%%%%%%%%%%%%%%%%%%%%%%%%%%%%%%%%%%%%%%%%%%%%
\subsection{Slide product}
%%%%%%%%%%%%%%%%%%%%%%%%%%%%%%%%%%%%%%%%%%%%%%%%%%%%%%%%%%%%%%%%
\label{sec:struct-slide}

We now give a combinatorial formula for the structure constants for fundamental slide polynomials by generalizing the shuffle product of Eilenberg and Mac Lane \cite{EM53} to weak compositions.

\begin{definition}[\cite{EM53}]
  The \emph{shuffle product} of words $A$ and $B$, denoted by $A \shuffle B$, is defined recursively by
  \begin{eqnarray*}
    A \shuffle \emptyset & = & \emptyset \shuffle A = \{A\}, \\
    A \shuffle B & = & \{A_1 (A_2 \cdots A_{\ell(A)} \shuffle B)\} \cup \{B_1 (A \shuffle B_2 \cdots B_{\ell(B)})\},
  \end{eqnarray*}
  where $\emptyset$ is the empty word. 
\label{def:shuffle}
\end{definition}

That is, $A \shuffle B$ is the set of all ways of riffle shuffling the terms of $A$, in order, with the terms of $B$, in order. For example, we have
\begin{eqnarray*}
  55111 \shuffle 82 = \left\{ \begin{array}{cccccc}
    5511182 & 5511812 & 5518112 & 5581112 & 5851112 & 8551112 \\
    5511821 & 5518121 & 5581121 & 5851121 & 8551121 & 5518211 \\
    5581211 & 5851211 & 8551211 & 5582111 & 5852111 & 8552111 \\
    5825111 & 8525111 & 8255111 & & &
  \end{array} \right\}.
\end{eqnarray*}

On the level of words, the quasi-shuffle product generalizes the shuffle product. However, the use of the two in giving rules for multiplying slide polynomials is far different. 

The \emph{descent composition of $C$}, denoted by $\Des(C)$, is the lengths of successive increasing runs of the letters read from left to right. For the example above, the last three terms on the right hand side have descent compositions $(2,2,3), (1,1,2,3), (1,3,3)$, respectively.

\begin{definition}
  Let $a,b$ be weak compositions of length $n$. Let $A$ and $B$ be the words defined by $A = (2n-1)^{a_1} \cdots (3)^{a_{n-1}} (1)^{a_n}$ and $B = (2n)^{b_1} \cdots (4)^{b_{n-1}} (2)^{b_n}$. Define the \emph{shuffle set of $a$ and $b$}, denoted by $\ShS(a,b)$, by
  \begin{equation}
    \ShS(a,b) = \{ C \in A \shuffle B \mid \Des_A(C) \geq a \mbox{ and } \Des_B(B) \geq b \},
  \end{equation}
  where $\Des_A(C)_i$ (respectively $\Des_B(C)_i$) is the number of letters from $A$ (respectively $B$) in the $i$th increasing run of $C$.
  \label{def:ShS}
\end{definition}

For example, $\ShS((0,2,0,3),(1,0,0,1))$ is given by
\begin{displaymath}
  \ShS((0,2,0,3),(1,0,0,1)) = \left\{ \begin{array}{ccccc}
    5581112 & 5851112 & 8551112 & 5581121 & 5851121 \\
    8551121 & 5581211 & 5851211 & 8551211 & 5582111 \\
    5852111 & 8552111 & 5825111 & 8255111 &
  \end{array} \right\}.
\end{displaymath}

\begin{definition}
  For weak compositions $a,b$ of length $n$, define the \emph{slide product of $a$ and $b$}, denoted by $a \slide b$, to be the formal sum
  \begin{equation}
    a \slide b =  \sum_{C \in \ShS(a,b)} \Des(\bump_{(a,b)}(C))
  \end{equation}
  where $\bump_{(a,b)}(C)$ is the unique element of $0^{n-\ell(\Des(C))} \shuffle C$ such that $\Des_A(\bump_{(a,b)}(C)) \geq a$ and $\Des_B(\bump_{(a,b)}(C)) \geq b$ and if $D \in 0^{n-\ell} \shuffle C$ satisfies $\Des_A(D) \geq a$ and $\Des_B(D) \geq b$, then $\Des(D) \geq \Des(\bump_{(a,b)}(C))$. 
  \label{def:slide}
\end{definition}

Continuing with our example, we have
\begin{eqnarray*}
  (0,2,0,3) \slide (1,0,0,1) & = & (3,0,0,4) + (2,1,0,4) + (1,2,0,4) + (3,0,3,1) + (2,1,3,1) \\
  & & (1,2,3,1) + (3,0,2,2) + (2,1,2,2) + (1,2,2,2) + (3,0,1,3) \\
  & & (2,1,1,3) + (1,2,1,3) + (2,2,0,3) + (1,3,0,3)
\end{eqnarray*}

Unlike the quasi-slide product, commutativity and associativity of the slide product is not immediate from the definition.

\begin{proposition}
  The slide product on weak compositions is commutative and associative.
\end{proposition}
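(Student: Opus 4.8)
The plan is to prove both properties directly from the definition, the key being a rigidity property of shuffles of monotone words. First I would isolate the observation that since the word $A = (2n-1)^{a_1}\cdots 3^{a_{n-1}} 1^{a_n}$ of Definition~\ref{def:ShS} is weakly decreasing, the $A$-letters occurring in any one increasing run of a shuffle $C \in A \shuffle B$ form a contiguous subword of $A$ that is simultaneously weakly decreasing and weakly increasing, hence constant; the same holds for the $B$-letters, and the block-indices used are weakly increasing as one passes from each run to the next. Since the values of $A$ are odd and those of $B$ even, the (at most one) $A$-block and (at most one) $B$-block of a given run are comparable, so the internal order of each run is forced by the pair of indices. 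Thus a shuffle $C$ together with its run decomposition is exactly a distribution of the parts of $a$ and of $b$ among consecutive runs, with forced intra-run order and the usual between-run descent condition; this is the model I would work in.

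For commutativity I would build a bijection $\Phi\colon \ShS(a,b) \to \ShS(b,a)$. Writing $A',B'$ for the words attached to $b,a$ in $b\slide a$, let $\Phi(C)$ keep the run decomposition of $C$ and, inside each run, replace the $A$-block of index $p$ (with $\alpha$ letters) and the $B$-block of index $q$ (with $\beta$ letters) by the $A'$-block of index $q$ (with $\beta$ letters) and the $B'$-block of index $p$ (with $\alpha$ letters), ordered as forced in the $b\slide a$ convention. A short parity computation shows every run keeps its length $\alpha+\beta$, so $\Des(\Phi(C))=\Des(C)$, while the $A$-count of run $k$ of $C$ equals the $B'$-count of run $k$ of $\Phi(C)$ and conversely; hence $\Des_A(C)=\Des_{B'}(\Phi(C))$ and $\Des_B(C)=\Des_{A'}(\Phi(C))$, so $C\in\ShS(a,b)$ if and only if $\Phi(C)\in\ShS(b,a)$. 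Because $\bump$ is defined by shuffling in the required number of zeros and selecting the dominant candidate compatible with the $a,b$ constraints — a procedure that $\Phi$ carries verbatim to the corresponding procedure for $b\slide a$ — one gets $\Des(\bump_{(a,b)}(C))=\Des(\bump_{(b,a)}(\Phi(C)))$, and summing over $\ShS(a,b)$ yields $a\slide b = b\slide a$.

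For associativity I would set up a triple analogue: attach to $a,b,c$ three weakly decreasing words $A,B,C$ with values in three residue classes (the base-$3$ version of the two-colour recipe), define a triple shuffle set and a triple slide product $\slide(a,b,c)$ by the same rule, and prove $(a\slide b)\slide c = \slide(a,b,c) = a\slide(b\slide c)$. The rigidity observation still applies — an increasing run of a shuffle of three monotone words carries a single block of each colour — so each identity reduces to checking that a shuffle, a $\bump$-correction, and a further shuffle reproduce the direct triple shuffle followed by one $\bump$; the colour-swapping bijections from the commutativity step then show $\slide(a,b,c)$ is symmetric in its arguments, which closes the loop.

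I expect two points to carry the real weight. In the commutativity bijection one must check that the between-run descent condition is preserved under $\Phi$; this is not automatic letter-by-letter, but it follows from the parity constraint together with the monotonicity of block-indices along runs (a naive counterexample is blocked precisely because the indices cannot decrease from one run to the next). In the associativity argument the genuine obstacle is the interaction of $\bump$ — taking the most dominant representative, after normalising length as in the Remark following Definition~\ref{def:q-shuffle} — with iterating the shuffle: one needs that passing to the dominant representative commutes with re-shuffling, for which the monotonicity of left-shifts under dominance order is the main input. As an alternative route that bypasses all of this, one could instead postpone the proposition until after the identity $\Fund_a\Fund_b = \sum_c [c \mid a\slide b]\,\Fund_c$ is established, after which commutativity and associativity of $\slide$ follow at once from those of multiplication in $\Poly_n$ together with the basis property of the fundamental slide polynomials (Theorem~\ref{thm:basis-fund}), the length normalisation being justified by Theorem~\ref{thm:slide-stable}.
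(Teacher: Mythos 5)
Your commutativity argument has a genuine gap at exactly the point you flagged: the claim that $\Phi$ preserves the run decomposition (so that $\Des(\Phi(C))=\Des(C)$ and the $A/B$ counts simply swap) is false, and the justification you offer (parity together with monotonicity of block indices) does not block the actual failure. Concretely, take $n=2$ and $a=b=(0,1)$, so $A=1$, $B=2$. The word $C=21$ lies in $\ShS(a,b)$, since $\Des_A(C)=(0,1)\geq(0,1)$ and $\Des_B(C)=(1,0)\geq(0,1)$; this term is genuinely needed, because $\Fund_{(0,1)}\Fund_{(0,1)}=\Fund_{(0,2)}+\Fund_{(1,1)}$ and the composition $(1,1)$ arises only from $C=21$. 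Here run $1$ is a $B$-block of index $2$ and run $2$ is an $A$-block of index $2$; your $\Phi$ replaces them by an $A'$-letter $1$ followed by a $B'$-letter $2$, i.e.\ the word $12$, in which the descent has disappeared: the two runs merge, the descent composition changes from $(1,1)$ to $(2)$, and $\Phi(21)$ collides with $\Phi(12)$. Monotonicity of block indices cannot exclude this, because the offending boundary consists of a $B$-block and an $A$-block of the \emph{same} index in consecutive runs, which is perfectly consistent with both indices being weakly increasing. This boundary configuration, where the letter $2m$ is immediately followed by $2m-1$ across a descent, is precisely the case the paper's proof isolates: such descending pairs are ``marked'' and left in place, so that across them the letters are reinterpreted rather than moved, and the bookkeeping of the dominance conditions is done around them. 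Without an analogous special treatment your bijection is not well defined, so the commutativity proof as written does not go through; the associativity half has the further unproven step you yourself name (that passing to the dominant representative commutes with re-shuffling), so it too is only a sketch.

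By contrast, the fallback you mention in your last sentence is sound and is the cleanest repair: the paper's proof of the identity $\Fund_a\Fund_b=\sum_c[c\mid a\slide b]\Fund_c$ (Theorem~\ref{thm:slide}) nowhere uses this proposition, so there is no circularity in proving that identity first and then deducing commutativity and associativity of $\slide$ from commutativity and associativity of multiplication in $\Poly_n$, linear independence of the fundamental slide polynomials (Theorem~\ref{thm:basis-fund}), and the length normalization supplied by Theorem~\ref{thm:slide-stable}. That route is genuinely different from the paper's letter-swapping argument and avoids the delicate boundary analysis entirely; but since you present it only as an aside and rest the proposition on the bijection, the submission as it stands is incomplete.
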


\begin{proof}
  It suffices to show that in Definition~\ref{def:ShS}, for any $i=1,\ldots,n$, we may take $A^{\prime},B^{\prime}$ to be $A,B$, respectively, with the letters corresponding to $a_i,b_i$, say $2m-1$ and $2m$, interchanged without altering the slide product. This is trivial unless $a_i,b_i>0$. For $C \in A \shuffle B$, construct $C^{\prime}$ as follows. Mark every occurrence of $2m-1$ and $2m$ that occur in $C$ as $(2m)(2m-1)$. Unmarked occurrences must occur in strings of the form $(2m-1)^c(2m)^d$. Change each such string to $(2m-1)^d(2m)^c$, and call the resulting word $C^{\prime}$. Since the positions of descents are unchanged, we have $\Des_A(C) = \Des_{A^{\prime}}(C^{\prime})$ and $\Des_B(C) = \Des_{B^{\prime}}(C^{\prime})$, as required.
\end{proof}

Our main result of this section is that the slide product of compositions precisely gives the structure constants for the fundamental slide polynomials.

\begin{theorem}
  For weak compositions $a$ and $b$ of length $n$, we have
  \begin{equation}
    \Fund_{a} \Fund_{b} = \sum_{c} [c \mid a \slide b] \Fund_{c},
  \end{equation}
  where $[c \mid a \slide b]$ means the coefficient of $c$ in the slide product $a \slide b$. 
  \label{thm:slide}
\end{theorem}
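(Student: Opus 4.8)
The plan is to expand both sides into sums of monomials and to exhibit a weight-preserving bijection between the terms.

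For the left-hand side, Definition~\ref{def:fundamental-shift} gives immediately
\[
  \Fund_a\Fund_b = \sum_{(a',b')} x^{a'+b'},
\]
the sum ranging over all pairs of weak compositions $a',b'$ of length $n$ with $a'\geq a$, $\flatten(a')$ refining $\flatten(a)$, $b'\geq b$, and $\flatten(b')$ refining $\flatten(b)$. For the right-hand side, writing $Q=\bump_{(a,b)}(C)$, unwinding the definition of the slide product and then Definition~\ref{def:fundamental-shift} gives
\[
  \sum_c [c\mid a\slide b]\,\Fund_c = \sum_{C\in\ShS(a,b)}\Fund_{\Des(Q)} = \sum_{C\in\ShS(a,b)}\ \sum_{\substack{d\,\geq\,\Des(Q)\\ \flatten(d)\ \mathrm{refines}\ \flatten(\Des(Q))}} x^{d}.
\]
Thus it suffices to construct a bijection between the pairs $(a',b')$ indexing the first sum and the pairs $(C,d)$ indexing the last, matching $d=a'+b'$.

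The structural input is that, since consecutive blocks of $A$ carry strictly decreasing odd values and consecutive blocks of $B$ strictly decreasing even values, a maximal weakly increasing run of any word $C\in A\shuffle B$ uses letters from at most one block of $A$ and from at most one block of $B$, and when it uses both they appear as a string of equal $A$-letters adjacent to a string of equal $B$-letters whose order is forced by the two block indices. I would set up the bijection starting from $(C,d)$: let $Q=\bump_{(a,b)}(C)$ and write $\Des(Q)=c_a+c_b$ for the canonical splitting recording the $A$-counts and $B$-counts of $Q$ slot by slot; a $d$ with $d\geq\Des(Q)$ and $\flatten(d)$ refining $\flatten(\Des(Q))$ refines each part of $\Des(Q)$ into consecutive pieces spread over slots, and the known order of that slot's $A$- and $B$-content determines uniquely how to distribute those pieces into an $A$-part $d_a$ and a $B$-part $d_b$. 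Setting $(a',b')=(d_a,d_b)$ gives $a'+b'=d$, and chasing the inequalities through $\bump_{(a,b)}$ shows that $d_a\geq a$ and $d_b\geq b$ hold exactly because $C\in\ShS(a,b)$ and $d\geq\Des(Q)$. In the reverse direction, $(a',b')$ is sent to the pair $(C,a'+b')$, where $C$ is the shuffle word recording, in slot order, the $A$- and $B$-letters attached to the nonzero entries of $a'$ and $b'$ in their forced relative order (coarsening adjacent slots exactly when their letters form one run); the minimality built into $\bump_{(a,b)}$ is precisely what forces this $C$ into $\ShS(a,b)$ and forces $a'+b'\geq\Des(\bump_{(a,b)}(C))$.

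The step I expect to be the main obstacle is verifying that these two assignments are well-defined and mutually inverse: the bookkeeping runs across three refinement levels---the data $a,b$, the run structure of $C$, and the fully split composition $d$---and one must check in particular that no over- or under-counting occurs, even though $\Fund$'s indexed by different values $\Des(\bump_{(a,b)}(C))$ can have overlapping monomial supports. A less delicate alternative, modeled on the proof of Theorem~\ref{thm:quasi-slide}, is to route the argument through Proposition~\ref{prop:F-to-M}: expand $\Fund_a=\sum\Mono_{a'}$ and $\Fund_b=\sum\Mono_{b'}$ over strongly dominating refinements, apply Theorem~\ref{thm:quasi-slide} to each product $\Mono_{a'}\Mono_{b'}$, and compare the resulting sum of monomial slide polynomials with Proposition~\ref{prop:F-to-M} applied to each $\Fund_c$ on the right-hand side; this reduces the claim to a purely combinatorial identity equating a sum of iterated quasi-slide products, regrouped by strong dominance, with the slide product $a\slide b$, which is the genuine kernel of the theorem. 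Either way, taking the stable limit via Theorem~\ref{thm:stable-limit} afterwards recovers the classical shuffle formula for products of fundamental quasisymmetric functions.
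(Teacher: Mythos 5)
Your proposal is correct and follows essentially the same route as the paper: expand both $\Fund_a\Fund_b$ and the right-hand side into monomials, and group the pairs $(a',b')$ into fundamental slide polynomials indexed by $\Des(\bump_{(a,b)}(C))$ for shuffle words $C$, with the distinct, index-decreasing letters of $A$ and $B$ guaranteeing the refinement conditions and that each monomial is counted exactly once. The verification you flag as the main obstacle is precisely the step the paper dispatches with the same letter-convention/run-structure observation you already state, so no new idea is missing.
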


\begin{proof}
  From the definition of $\Fund_a$, we have $\Fund_{a} \Fund_{b} = \sum_{(a^{\prime},b^{\prime})} x^{a^{\prime} + b^{\prime}}$, where the sum is over all pairs $(a^{\prime},b^{\prime})$ such that $a^{\prime} \geq a$, $\flatten(a^{\prime})$ refines $\flatten(a)$, and $b^{\prime} \geq b$, $\flatten(b^{\prime})$ refines $\flatten(b)$. By taking $\bump_{(a,b)}(C)$ maximal, we collect together monomials occuring in a single monomial slide polynomial just as in the quasi-slide product. Each part of $a$ and $b$ is represented by a different letter, with the letter for $a_i$ larger than that for $a_{i+1}$, and similarly for $b$. This ensures that taking $\Des_A$ of a shuffle of $A$ and $B$ will result in a refinement of $a$, and similarly for $b$. Finally, by taking the letter for $a_i$ larger than the letter for $b_i$, we ensure that each monomial slide polynomial occuring in the expansion of a fundamental slide polynomial on the right hand side is counted exactly once.
\end{proof}

We can use Theorem~\ref{thm:slide} together with Theorem~\ref{thm:stable-limit} to prove a result of Gessel \cite{Ges84}, stating that the structure constants for the fundamental quasisymmetric polynomials are given by the shuffle product of \emph{any} words representing the indexing compositions.

\begin{corollary}[\cite{Ges84}]
  For (strong) compositions $\alpha$ and $\beta$, we have
  \begin{equation}
    F_{\alpha}(X) F_{\beta}(X) = \sum_{C \in A \shuffle B} F_{\Des(C)}(X),
    \label{e:shuffle}
  \end{equation}
  where $A,B$ are any words with $\Des(A) = \alpha$, $\Des(B) = \beta$, and $A \cap B = \emptyset$, i.e. no letters appear in both $A$ and $B$.
  \label{cor:shuffle}
\end{corollary}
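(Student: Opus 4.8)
The plan is to deduce the corollary from Theorem~\ref{thm:slide} by passing to the stable limit, in exactly the way Corollary~\ref{cor:quasi-shuffle} was deduced from Theorem~\ref{thm:quasi-slide}. Fix weak compositions $a$ and $b$ with $\flatten(a)=\alpha$ and $\flatten(b)=\beta$; for concreteness take $a=\alpha$ and $b=\beta$. For each $m\ge 0$, after padding to a common length $n$, Theorem~\ref{thm:slide} gives $\Fund_{0^m\times a}\,\Fund_{0^m\times b}=\sum_c [c\mid (0^m\times a)\slide(0^m\times b)]\,\Fund_c$. By Theorem~\ref{thm:stable-limit} we have $\Fund_{0^m\times a}\to F_\alpha(X)$ and $\Fund_{0^m\times b}\to F_\beta(X)$ as $m\to\infty$, and $\Fund_c\to F_{\flatten(c)}(X)$ for every $c$ that occurs. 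So it suffices to show that the right-hand side converges to $\sum_{C\in A\shuffle B}F_{\Des(C)}(X)$, where $A,B$ are the words attached to $a,b$ as in Definition~\ref{def:ShS}; note that the descent structure of such $C$, and hence the multiset $\{\Des(C)\}$, does not depend on $m$.

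There are two things to establish. First, for $m$ large, $\ShS(0^m\times a,0^m\times b)=A\shuffle B$: any $C\in A\shuffle B$ has at most $|\alpha|+|\beta|$ increasing runs, so once $m>|\alpha|+|\beta|$ the conditions $\Des_A(C)\ge 0^m\times a$ and $\Des_B(C)\ge 0^m\times b$ hold automatically, since the partial sums of $0^m\times a$ vanish through position $m$ and thereafter are bounded by $|\alpha|$, the total number of $A$-letters, all of which have already appeared by run $m$; symmetrically for $b$. Thus every shuffle survives. Second, for each such $C$ the element $\bump_{(0^m\times a,0^m\times b)}(C)$ stabilizes, in the sense that $\Fund_{\bump_{(\cdot)}(C)}\to F_{\Des(C)}(X)$ as $m\to\infty$: with the dominance constraints vacuous, $\bump$ reduces to the dominance-minimal way of distributing $n-\ell(\Des(C))$ zeros among the runs recorded by $\Des(C)$, and combining Theorem~\ref{thm:slide-stable} with Lemma~\ref{lem:lift-fund} and Theorem~\ref{thm:stable-limit} identifies the limit of the associated fundamental slide polynomial as $F_{\Des(C)}(X)$. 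Summing over $C\in A\shuffle B$ then yields $F_\alpha(X)F_\beta(X)=\sum_{C\in A\shuffle B}F_{\Des(C)}(X)$ for this particular choice of words.

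It remains to pass to arbitrary words $A,B$ with $\Des(A)=\alpha$, $\Des(B)=\beta$ and $A\cap B=\emptyset$. The multiset $\{\Des(C)\mid C\in A\shuffle B\}$ depends only on these data: a relabeling that is order-preserving within each word and lands in disjoint alphabets induces a bijection of shuffles that preserves the positions of all descents, and a short case analysis (comparing an $A$-letter to an adjacent $B$-letter) shows the descent composition is unchanged even when the alphabets of $A$ and $B$ interleave differently. Alternatively, since the left-hand side $F_\alpha(X)F_\beta(X)$ is manifestly independent of any choice of representing words, the formula just proved for the words of Definition~\ref{def:ShS} forces the same identity for every admissible pair $A,B$.

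I expect the second point of the middle paragraph to be the main obstacle: the zeros inserted by $\bump$ may split an increasing run of $C$, so a priori $\flatten$ of the bumped composition is only a refinement of $\Des(C)$ rather than $\Des(C)$ itself, and one must check that the dominance-minimality built into $\bump$ together with the stability of Theorem~\ref{thm:slide-stable} makes this refinement collapse to $\Des(C)$ in the limit. Once that is pinned down, the rest is routine bookkeeping with the definitions, parallel to the proof of Corollary~\ref{cor:quasi-shuffle}.
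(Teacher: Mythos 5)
Your stable-limit strategy is the same as the paper's, and the analytic half of your argument is sound: for $m$ large the dominance conditions in Definition~\ref{def:ShS} are vacuous, so $\ShS(0^m\times a,0^m\times b)$ is the full shuffle set of the canonical words, and the minimality built into $\bump$ in Definition~\ref{def:slide} forces $\Des(\bump(C)) = 0^{\,n-\ell(\Des(C))}\times \Des(C)$ with no run of $C$ split (splitting a run keeps the number of parts equal to $n$ while moving a letter into an earlier part, hence yields a dominance-\emph{larger} descent composition), so each term tends to $F_{\Des(C)}(X)$ by Theorem~\ref{thm:stable-limit}. The obstacle you flag at the end therefore resolves exactly as you hope.

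The genuine gap is in your final reduction to arbitrary words $A,B$. Your claim (i) is false as stated: a relabeling that is order-preserving within each word but changes the relative order \emph{between} the two alphabets does not preserve descent compositions shuffle-by-shuffle. Take $A=1$, $B=2$ versus $A'=2$, $B'=1$: the shuffle with the $A$-letter first is $12$, with $\Des=(2)$, while the corresponding shuffle of $A',B'$ is $21$, with $\Des=(1,1)$. Only the multiset $\{\Des(C)\mid C\in A\shuffle B\}$ is invariant, and proving that invariance requires a genuine bijection (for instance the marking-and-swapping argument the paper uses to prove commutativity of the slide product), not a per-shuffle case analysis. Your fallback (ii) is circular: the left-hand side is independent of the choice of words, but that says nothing about the right-hand side, whose independence of the choice is precisely what must be established. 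This is exactly the point the paper's proof is organized around: given the arbitrary pair $A,B$ (with letters taken constant on runs), it constructs $a,b$ by interleaving the parts of $\alpha$ and $\beta$ according to which word's next letter is larger, so that the canonical words of Definition~\ref{def:ShS} realize the same order pattern as $A,B$ and $\ShS(0^m\times a,0^m\times b)$ is identified with $A\shuffle B$ itself. Your choice $a=\alpha$, $b=\beta$ fixes one particular interleaving and hence proves the identity only for that one pair of words; to complete the argument you should either adopt the paper's construction of $a,b$ from the given $A,B$, or prove the multiset invariance directly by an explicit bijection that handles one cross-alphabet comparison swap at a time.
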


\begin{proof}
  If $\ell(\Des(A)) = \ell$, then replacing the letters of $A$, in order (e.g. by de-standardization), with any $\ell$-subset of positive integers clearly leaves the descent composition unchanged. Therefore we may assume that $A$ and $B$ use exactly $\ell(\alpha)$ and $\ell(\beta)$ letters, respectively. Given any choice of $A,B$, construct weak compositions $a,b$ of length $\ell(\alpha)+\ell(\beta)$ as follows. Assuming $\alpha_1,\ldots,\alpha_i$ and $\beta_1,\ldots\beta_j$ have been placed, if the letter corresponding to $\alpha_{i+1}$ is greater than the letter corresponding to $\beta_{j+1}$, then set $a_{i+j+1}=\alpha_{i+1}$ and $b_{i+j+1}=0$; otherwise set $a_{i+j+1}=0$ and $b_{i+j+1}=\beta_{j+1}$. By construction, $\flatten(a) = \alpha$ and $\flatten(b)=\beta$. By taking $m$ to be the length of the longest descent composition for any shuffle of $A \shuffle B$, we ensure $\ShS(0^m\times a,0^m\times b) = A \shuffle B$. The result now follows from Theorem~\ref{thm:slide} and Theorem~\ref{thm:stable-limit}.
\end{proof}

%%%%%%%%%%%%%%%%%%%%%%%%%%%%%%%%%%%%%%%%%%%%%%%%%%%%%%%%%%%%%%%%
\subsection{Products of Schubert polynomials}
%%%%%%%%%%%%%%%%%%%%%%%%%%%%%%%%%%%%%%%%%%%%%%%%%%%%%%%%%%%%%%%%
\label{sec:struct-schub}

Since the Schubert polynomial $\sch_{w}$ is a polynomial representative for the Schubert class of $w$ in the cohomology of the flag manifold, the coefficients $c_{u,v}^{w}$ defined by
\begin{equation}
  \sch_{u} \sch_{v} = \sum_{w} c_{u,v}^{w} \sch_{w},
\label{e:basis}
\end{equation}
enumerate flags in a generic triple intersection of Schubert varieties. Thus these so-called \emph{Littlewood--Richardson coefficients} are known to be nonnegative. A fundamental problem in Schubert calculus is to find a \emph{positive} combinatorial construction for $c_{u,v}^{w}$. One impediment to solving this problem is that computations quickly become intractable when multiplying out monomials. The following Littlewood--Richardson rule for the fundamental slide expansion of the product of Schubert polynomials gives us a more compact formula that should make computer experimentation possible.

\begin{theorem}
  For $u,v$ permutations and $a$ a weak composition, define $c_{u,v}^{a}$ by
  \begin{equation}
    \sch_{u} \sch_{v} = \sum_{a} c_{u,v}^{a} \Fund_{a}.
    \label{e:QLRR}
  \end{equation}
  Then we have
  \begin{equation}
    c_{u,v}^{a}  = \sum_{(P,Q) \in \QPD(u) \times \QPD(v)} [a \mid \wt(P) \slide \wt(Q)].
    \label{e:QLRR-formula}
  \end{equation}
  \label{thm:QLRR-sch}
\end{theorem}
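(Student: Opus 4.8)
The plan is to derive this directly from the two expansion results already established: Theorem~\ref{thm:schubert-slide}, which writes each Schubert polynomial as a positive sum of fundamental slide polynomials indexed by quasi-Yamanouchi pipe dreams, and Theorem~\ref{thm:slide}, which computes the product of two fundamental slide polynomials via the slide product on weak compositions. First I would apply Theorem~\ref{thm:schubert-slide} to both factors to write
\begin{equation*}
  \sch_u \sch_v = \left( \sum_{P \in \QPD(u)} \Fund_{\wt(P)} \right) \left( \sum_{Q \in \QPD(v)} \Fund_{\wt(Q)} \right) = \sum_{(P,Q) \in \QPD(u) \times \QPD(v)} \Fund_{\wt(P)} \Fund_{\wt(Q)}.
\end{equation*}
Then for each pair $(P,Q)$ I would invoke Theorem~\ref{thm:slide} to expand $\Fund_{\wt(P)} \Fund_{\wt(Q)} = \sum_c [c \mid \wt(P) \slide \wt(Q)] \Fund_c$, and finally collect the coefficient of a fixed $\Fund_a$ across all pairs. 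Since the fundamental slide polynomials form a $\mathbb{Z}$-basis of $\Poly_n^k$ by Theorem~\ref{thm:basis-fund}, equating coefficients of $\Fund_a$ is legitimate and yields \eqref{e:QLRR-formula}.

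The one bookkeeping point that needs care is the number of variables: the weights $\wt(P)$ and $\wt(Q)$ arising from pipe dreams for $u$ and $v$ may be read as weak compositions of different lengths, whereas Definition~\ref{def:slide} requires two weak compositions of the same length $n$. Here I would use the stability result Theorem~\ref{thm:slide-stable}, which says $\Fund_{a \times 0^m} = \Fund_a$, to pad all the relevant compositions to a common length $n$ large enough to accommodate both $u$ and $v$ (it suffices that $u,v$ have no descent beyond position $n$ and that $n$ exceeds the lengths of all codes involved). After this normalization every $\Fund_{\wt(P)} \Fund_{\wt(Q)}$ is a genuine product of length-$n$ fundamental slide polynomials, Theorem~\ref{thm:slide} applies verbatim, and the slide products $\wt(P) \slide \wt(Q)$ are all taken in the same length.

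I expect this to be the main (and essentially only) obstacle: verifying that the padding is compatible on both sides of the identity so that no coefficient is miscounted or omitted when variables are added, and that the sum over $\QPD(u) \times \QPD(v)$ remains finite (which it is, as each $\QPD(w)$ is finite). Beyond this the argument is a formal manipulation. I would also remark, as a sanity check and to connect with the rest of the paper, that taking the stable limit $u \mapsto 1^m \times u$, $v \mapsto 1^m \times v$ and letting $m \to \infty$ recovers via Theorem~\ref{thm:stable-limit} and Corollary~\ref{cor:w-monotone} a statement about products of Stanley symmetric functions expressed through Schubert structure constants, which is the application advertised in the introduction; but that refinement is the subject of a separate result and is not needed to prove Theorem~\ref{thm:QLRR-sch} itself.
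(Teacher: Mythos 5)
Your argument is correct and is essentially the paper's own proof: expand both Schubert polynomials via Theorem~\ref{thm:schubert-slide}, multiply using Theorem~\ref{thm:slide}, and equate coefficients in the fundamental slide basis (Theorem~\ref{thm:basis-fund}). The length-padding point you raise is already handled by the paper's standing convention (Remark~5.3) together with Theorem~\ref{thm:slide-stable}, so no further care is needed.
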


\begin{proof}
  This follows from the characterization of the slide product in Theorem~\ref{thm:slide} and the fundamental slide expansion of Schubert polynomials in Theorem~\ref{thm:schubert-slide}.
\end{proof}

For example, we can compute the product $\sch_{24153} \sch_{2431}$ by
\begin{eqnarray*}
  \sch_{24153} \sch_{2431} & = & \left( \Fund_{(1,2,0,1)} + \Fund_{(2,1,0,1)} + \Fund_{(2,2,0,0)} \right) \left(\Fund_{(1,2,1,0)} + \Fund_{(2,1,1,0)} \right) \\
  & = &  \Fund_{(2,4,1,1)} + 2\Fund_{(3,3,1,1)} + \Fund_{(4,2,1,1)} + \Fund_{(2,4,2,0)} \\
  & & + 2\Fund_{(3,3,2,0)} + \Fund_{(4,2,2,0)} + \Fund_{(3,4,1,0)} + \Fund_{(4,3,1,0)} \\
  & = & \left( \Fund_{(2,4,1,1)} + \Fund_{(3,3,1,1)} + \Fund_{(4,2,1,1)} \right) + \left( \Fund_{(3,3,1,1)} \right) + \left( \Fund_{(3,3,2,0)} \right) \\
  & & + \left(\Fund_{(2,4,2,0)} + \Fund_{(3,3,2,0)} + \Fund_{(4,2,2,0)} \right) + \left( \Fund_{(3,4,1,0)} + \Fund_{(4,3,1,0)}\right) \\
  & = & \sch_{362415} + \sch_{45231} + \sch_{45312} + \sch_{364125} + \sch_{462135}.
\end{eqnarray*}
Here, in the last step we made use of the triangularity between the Schubert basis and the fundamental slide basis given in Proposition~\ref{prop:lead}.

In addition to improved computations, the product expansions for slide polynomials allow us to understand better the products of stable limits as well. To help analyze this stability, we define the following new statistic on pairs of (strong) compositions,
\begin{equation}
  \zeta(\alpha,\beta) = \min(|\alpha| + \ell(\beta), \ell(\alpha) + |\beta|).
  \label{e:zeta-strong}
\end{equation}
For example, $\zeta((2,3),(1,1)) = \min(5+2,2+2) = 4$.

\begin{lemma}
  Let $A,B$ be words with disjoint letters, and set $\alpha=\Des(A), \beta=\Des(B)$. Then there exists $C \in A \shuffle B$ such that $\ell(\Des(C)) = \zeta(\alpha,\beta)$, and for all $D \in A \shuffle B$, $\ell(\Des(D)) \leq \zeta(\alpha,\beta)$. %Moreover, if $|\alpha| - \ell(\alpha) < |\beta| - \ell(\beta)$, then for all $C^{\prime} \in A \shuffle B$ such that $\ell(\Des(C^{\prime})) = \zeta(\alpha,\beta)$, the last letter of $C^{\prime}$ comes from $B$.
  \label{lem:shuffle}
\end{lemma}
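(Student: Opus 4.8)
The plan is to understand combinatorially how many increasing runs a shuffle $C \in A \shuffle B$ can have, in terms of the descent structure of $A$ and $B$ alone. Since replacing letters in order does not change descent compositions, I may assume $A$ uses exactly $\ell(\alpha)$ distinct letters and $B$ uses exactly $\ell(\beta)$ distinct letters, with the letter sets disjoint. A descent of $C$ occurs at each position where the letter drops, and the number of increasing runs is one plus the number of descents. So I want to compute the maximum, over all shuffles, of the number of descents, and exhibit a shuffle attaining it.

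First I would bound the number of descents from above. Each descent of $C$ is at a consecutive pair $c_t > c_{t+1}$; classify it by whether the two letters both come from $A$, both from $B$, or one from each. Descents internal to $A$ number exactly $\ell(\alpha)-1$ total (the descents of $A$ are preserved in order in any shuffle), and likewise internal to $B$ there are $\ell(\beta)-1$. A ``mixed'' descent has either an $A$-letter immediately followed by a $B$-letter, or vice versa; since the letters are disjoint, exactly one of these two orderings is a descent for any given adjacent mixed pair, but I need to count how many mixed adjacencies can be made into descents. The key observation is that the positions of $C$ partition into maximal blocks alternately coming from $A$ and from $B$; if there are $r$ blocks from $A$ and $s$ blocks from $B$ interleaved, then $|r-s|\le 1$, and the number of mixed adjacencies is $r+s-1$. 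Of these, at most $\min(r,s)$ can be descents (a block boundary $A$-block$\to B$-block and the next $B$-block$\to A$-block cannot both be descents when... in fact one should argue more carefully, but the upshot is a bound). Combining, the number of descents of $C$ is at most $(\ell(\alpha)-1) + (\ell(\beta)-1) + (\text{mixed descents}) + (\text{number of blocks that begin mid-descent})$; summing these over the interleaving pattern and optimizing the block count gives exactly $\zeta(\alpha,\beta)-1 = \min(|\alpha|+\ell(\beta),\ell(\alpha)+|\beta|) - 1$. Here the two terms of the $\min$ arise from the two extreme strategies: break $A$ into all $|\alpha|$ singletons and interleave $B$'s runs among them (yielding $|\alpha|+\ell(\beta)$ runs when the shape permits), versus the symmetric strategy breaking $B$ into singletons.

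Second I would construct a shuffle $C$ realizing $\ell(\Des(C)) = \zeta(\alpha,\beta)$. Assume WLOG the minimum in \eqref{e:zeta-strong} is $|\alpha|+\ell(\beta)$. Since $A$ uses $\ell(\alpha)$ distinct letters in $\ell(\alpha)$ increasing runs, but has $|\alpha|$ letters total, I can insert a letter of $B$ between every two consecutive letters of $A$ — there are $|\alpha|-1$ such gaps, plus the positions before and after $A$ — always choosing, at each insertion, to place the next letter of $B$ (in $B$-order) so as to create a new descent; because $B$'s letters are disjoint from $A$'s and can be taken as large or small as needed relative to the local $A$-letters (after the de-standardization freedom), this is always possible, and each inserted $B$-run of length $k$ contributes $k$ new runs rather than $1$. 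Tallying runs gives $|\alpha| + \ell(\beta)$ exactly. The symmetric construction handles the other case.

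The main obstacle will be the upper bound: pinning down precisely why one cannot exceed $\zeta(\alpha,\beta)$ increasing runs, i.e. the block-interleaving counting argument showing that mixed descents plus preserved internal descents can never beat both $|\alpha|+\ell(\beta)-1$ and $\ell(\alpha)+|\beta|-1$ simultaneously. A clean way to package this is: in any shuffle $C$, each increasing run of $C$ contains at least one letter, and a run that contains $\geq 2$ letters from the same source $A$ must lie inside a single run of that source; counting runs by how they distribute over the $\ell(\alpha)$ runs of $A$ and the $\ell(\beta)$ runs of $B$ and using that each of the $|\beta|$ letters of $B$ (resp. $|\alpha|$ of $A$) can start at most one new run gives the bound. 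I expect the write-up of this counting to be the only genuinely delicate part; the construction and the reduction to distinct letters are routine.
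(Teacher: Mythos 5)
The upper bound is where your argument genuinely breaks, and it is the heart of the lemma. Two of the sub-claims in your block-counting are false: (i) the descents internal to $A$ are not ``exactly $\ell(\alpha)-1$'' in a shuffle --- a descent of $A$ need not survive as an adjacency of $C$ at all, so this is only an upper bound; and, more seriously, (ii) the claim that at most $\min(r,s)$ of the $r+s-1$ mixed adjacencies can be descents is simply wrong: take $A=51$, $B=3$, $C=531$, where $r=2$, $s=1$ and \emph{both} mixed adjacencies are descents. Moreover, even if the sub-claims were repaired, adding $(\ell(\alpha)-1)+(\ell(\beta)-1)+(\mbox{mixed descents})$ cannot yield $\zeta(\alpha,\beta)-1$: for $\alpha=(3)$, $\beta=(1,1,1)$ the sum can reach $1+3-2+3=5$ while $\zeta(\alpha,\beta)-1=3$, so the true bound comes from a trade-off between the three kinds of descents, not from summing separate maxima, and your proposal never supplies that joint argument (you acknowledge this yourself). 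Your fallback ``clean packaging'' has the same defect: several runs of $C$ may begin with letters lying in the \emph{same} run of $A$ (e.g.\ $A=12$, $B=09$, $C=1092$ has runs $(1),(09),(2)$, two of which start inside the unique run of $A$), so charging runs of $C$ to runs of $A$ and letters of $B$ is not injective as stated. Finally, your opening reduction is justified too quickly: replacing letters in order preserves $\Des(A)$ and $\Des(B)$, but it changes the descent compositions of individual shuffles; what you need is that the multiset $\{\Des(C)\mid C\in A\shuffle B\}$ depends only on $(\alpha,\beta)$, which is precisely Corollary~\ref{cor:shuffle}.

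For comparison, the paper's proof invokes Corollary~\ref{cor:shuffle} to assume every letter of $B$ exceeds every letter of $A$ and then produces the maximizing shuffle by a greedy interleaving; it does not attempt a type-by-type descent count. If you want a self-contained upper bound (valid for arbitrary representing words), a clean route is induction on $|\beta|$: deleting a single letter from a word lowers the number of descents by at most one (two adjacencies collapse to one, and if both were descents the surviving one still is), so deleting the letters of $B$ one by one gives $\ell(\Des(D))\le \ell(\alpha)+|\beta|$, and symmetrically $\ell(\Des(D))\le |\alpha|+\ell(\beta)$, which is exactly $\zeta(\alpha,\beta)$ from \eqref{e:zeta-strong}. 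Your existence construction also needs repair: you cannot choose $B$'s letters ``as large or small as needed'' adaptively during the insertion (the representing words are fixed before shuffling), there may not be enough letters of $B$ for all $|\alpha|-1$ gaps even when the minimum is $|\alpha|+\ell(\beta)$ (e.g.\ $\alpha=(1,1,1)$, $\beta=(1)$), and an increasing run of $B$ inserted as a contiguous block creates at most two new descents, not $k$; the paper's greedy (or a direct construction after the normalization) avoids all of this.
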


\begin{proof}
  By Corollary~\ref{cor:shuffle}, we may assume all letters in $A$ are smaller than all letters in $B$. Construct $C \in A \shuffle B$ using the greedy algorithm as follows. Assuming $C_1,\ldots,C_{h-1}$ have been chosen, say with $A_i\cdots A_{\ell}$ and $B_j \cdots B_m$ remaining, take $C_h$ to be the larger of $A_i,B_j$ that is smaller than $C_{h-1}$, or, if both are larger, take the larger of the two. This clearly maximizes the number of descents. %Moreover, there are $\ell(\alpha)-1$ descents between letters coming from $A$, $\ell(\beta)-1$ descents between letters coming from $B$, and $\min(|\alpha|-\ell(\alpha),|\beta|-\ell(\beta))+1$ descents between a letter from $B$ and a letter from $A$. Furthermore, note that if the minimum is attained strictly at $|\alpha| - \ell(\alpha)$, then all the letters of $A$ are used before the last letter of $B$.
\end{proof}

To extend $\zeta$ to pairs of weak compositions, let $|a| = \sum_{i} a_i$ and $\ell(a) = \ell(\flatten(a))$. Given a pair of weak compositions $(a,b)$, let $j_a$ be the smallest index such that $|a_1\cdots a_{j_a}| - \ell(a_1\cdots a_{j_a}) \geq |b| - \ell(b)$, and similarly define $j_b$. Let $1 \leq i_a < j_a$ (if $j_a$ is not defined, then $i_a$ ranges to $n$) be the index that maximizes $|a_1\cdots a_{i_a}|-i_a$. For example, if $a=(0,2,0,3)$ and $b = (1,0,0,1)$, then $j_a$ is undefined and $i_a = 1$. Note that, by construction, we always have $a_{i_a}, a_{j_a}>0$ when defined. Define $\zeta$ on weak compositions by
\begin{equation}
  \zeta(a,b) = \max \left(\begin{array}{cc}
    |a_1\cdots a_{i_a}|+\ell(b)-i_a-\epsilon(a_1\cdots a_{i_a},b), & \ell(a_1\cdots a_{j_a})+|b|-j_a, \\
    |b_1\cdots b_{i_b}|+\ell(a)-i_b-\epsilon(a,b_1\cdots b_{i_b}), & \ell(b_1\cdots b_{j_a})+|a|-j_b
  \end{array} \right),
  \label{e:zeta-weak}
\end{equation}
where $\epsilon(a,b)=1$ if there exists no $C \in \flatten(a)\shuffle\flatten(b)$ with a letter from $a$ appearing after the final descent of $C$, and  $\epsilon(a,b)=0$ otherwise. For example, $\zeta((0,2,0,3),(1,0,0,1)) = \max(2+2-1-2, 1+2-1-1) = 1$.

\begin{lemma}
  For weak compositions $a,b$, we have
  \begin{equation}
    0 < ^{\#}\ShS(a,b) < \cdots < ^{\#}\ShS(0^{\zeta(a,b)}\times a,0^{\zeta(a,b)}\times b) = \cdots = ^{\#}(A \shuffle B),
  \end{equation}
  where $A,B$ are any words with disjoint letters such that $\Des(A) = \flatten(a)$ and $\Des(B) = \flatten(b)$.
  \label{lem:product-ShS}
\end{lemma}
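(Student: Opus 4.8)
The plan is to establish this statement by relating $\ShS(a,b)$ to the image of the standardization map on pipe dreams, just as Theorem~\ref{thm:w-monotone} relates $\QPD$ to $\RD$ via $\sit$. The key is to produce, for each shuffle $C \in A \shuffle B$, a canonical weak composition placement (analogous to $\bump_{(a,b)}(C)$ but tracking when the needed number of leading zeros becomes available), and to show that the set of ``entry positions'' required forms an interval of integers so that the counts increase one step at a time until they stabilize.

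First I would set up the bijective framework: fix words $A,B$ with disjoint letters, $\Des(A)=\flatten(a)$, $\Des(B)=\flatten(b)$, and recall that by the proof of Theorem~\ref{thm:slide} a term $C \in A \shuffle B$ contributes to $\ShS(a',b')$ (for $a'=0^m\times a$, $b'=0^m\times b$) precisely when the greedy leading-zero insertion $\bump$ can realize $\Des_A \ge a'$ and $\Des_B \ge b'$ — equivalently, when $m$ is at least the ``deficiency'' of $C$, i.e. the minimum number of extra zero-runs one must prepend to the front so that both run-length sequences dominate $a$ and $b$ after flattening-alignment. Call this number $d(C)$. Then $^{\#}\ShS(0^m\times a,0^m\times b) = \#\{C \in A\shuffle B : d(C) \le m\}$, which is nondecreasing in $m$ and equals $^{\#}(A\shuffle B)$ once $m \ge \max_C d(C)$. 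So the statement reduces to two claims: (i) $\max_{C} d(C) = \zeta(a,b)$, and (ii) the deficiencies $\{d(C)\}$ realize every value in $\{0,1,\ldots,\zeta(a,b)\}$, with strict inequality at each step (so that as $m$ increments, at least one new $C$ enters) and $^{\#}\ShS(a,b) > 0$ at $m=0$.

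For claim (ii), the cleanest route mirrors Lemma~\ref{lem:w-monotone}: show that any two elements of $A \shuffle B$ are connected by a sequence of elementary moves — transposing an adjacent pair of letters, one from $A$ and one from $B$ — each of which changes the number of descents of $C$ (hence the deficiency $d(C)$) by at most one. Then the image of $d$ is an interval containing $0$ (the greedy shuffle of Lemma~\ref{lem:shuffle}, read appropriately, or rather the shuffle that keeps all of $A$ then all of $B$ interleaved minimally, has deficiency $0$) and containing $\zeta(a,b)$. The nonemptiness $^{\#}\ShS(a,b)>0$ follows because the concatenation-type shuffle realizing $d(C)=0$ always lies in $\ShS(a,b)$. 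For claim (i) — that the maximum deficiency is exactly $\zeta(a,b)$ as defined in \eqref{e:zeta-weak} — I would identify which shuffles $C$ maximize the number of descents subject to the domination constraints: one wants to front-load as many letters of one word as possible before the constraint from the other word's partial sums forces a descent, which is precisely what the four-way max in \eqref{e:zeta-weak} records (the $i_a$ term: push $a$'s prefix forward maximizing $|a_1\cdots a_{i_a}|-i_a$, correct by $\epsilon$ for whether a final-descent letter from $a$ can occur; the $j_a$ term: the symmetric bound where $b$'s partial-sum-minus-length constraint kicks in). Verifying this matching is the computational heart of the argument.

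\textbf{Main obstacle.} The hard part will be claim (i): pinning down that the combinatorially-defined optimum $\max_C d(C)$ agrees term-by-term with the somewhat intricate formula \eqref{e:zeta-weak}, including getting the $\epsilon$-corrections and the role of $i_a$ versus $j_a$ exactly right. The domination conditions $\Des_A(C)\ge a$ and $\Des_B(C)\ge b$ are partial-sum inequalities, and translating ``minimal number of prepended zeros needed'' into the explicit extremal prefix quantities requires a careful case analysis of where the binding constraint sits — whether it is $a$'s prefix sum growing too slowly (the $i_a$-type bound) or $b$'s deficit $|b|-\ell(b)$ being large relative to how fast $a$'s prefixes accumulate surplus (the $j_a$-type bound). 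The $\epsilon$ indicator, which detects whether a letter of $A$ can legally appear after the last descent of $C$, is the subtle off-by-one that one must track through this analysis. The monotonicity (claim (ii)) should be routine once the elementary-move lemma is in place, paralleling Lemma~\ref{lem:w-monotone} closely.
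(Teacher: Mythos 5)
Your reduction — assign to each $C \in A \shuffle B$ the minimal number $d(C)$ of prepended zeros for which the domination conditions $\Des_A(C) \geq 0^m \times a$, $\Des_B(C) \geq 0^m \times b$ hold, so that $^{\#}\ShS(0^m\times a, 0^m\times b)$ counts $\{C : d(C) \leq m\}$ — is sound, and your interval argument via adjacent $A$--$B$ transpositions is a genuinely different (and attractive) route to strictness than the paper's, which instead runs a modified greedy algorithm and, at its first failure, exhibits an explicit element of $\ShS(0\times a, 0\times b)$ not in $\ShS(a,b)$. But the proposal has a genuine gap exactly where you flag it: claim (i), that $\max_C d(C)$ equals the formula \eqref{e:zeta-weak}, is the substantive content of the lemma and is left as a plan, not a proof. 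The paper's proof spends its final two paragraphs precisely on this: a shuffle forces an extra zero exactly when the last letter of the block for a nonzero part $a_i$ can land in a run of index exceeding $i$; applying Lemma~\ref{lem:shuffle} to the prefix $a_1\cdots a_i$ against all of $b$ (corrected by the indicator $\epsilon$ for whether an $A$-letter can sit after the final descent) bounds that run index by $\zeta(\flatten(a_1\cdots a_i),\flatten(b)) - \epsilon$, and then a monotonicity analysis of $|a_1\cdots a_i| - \ell(a_1\cdots a_i)$ against $\ell(a_1\cdots a_i) + |b| - i$ shows the maximum over $i$ is attained either at $i_a$ or at the crossing index $j_a$, which is what produces the four-term maximum in \eqref{e:zeta-weak}. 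Without carrying this out, the identification of the stabilization threshold with $\zeta(a,b)$ — and hence the equality and the exact number of strict steps in the chain — is unproven.

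Two further steps need repair. First, neither nonemptiness witness you name works: the maximal-descent greedy shuffle of Lemma~\ref{lem:shuffle} generally violates the domination constraints (this is exactly why the paper modifies the greedy algorithm mid-stream), and plain concatenation of $A$ then $B$ can fail as well — for $a=(0,1,1)$, $b=(2,0,0)$ one gets $A=31$, $B=66$, $AB = 3166$ with $\Des_B(AB) = (0,2,0) \not\geq (2,0,0)$. A correct witness is the block-aligned shuffle whose $i$th run consists of the $a_i$-block followed by the $b_i$-block (or the paper's modified greedy word $Z$). Second, in your move lemma the inference ``the swap changes the number of descents by at most one, hence changes $d(C)$ by at most one'' is a non sequitur: $d(C)$ is governed by \emph{which} runs contain the last letters of the individual $a_i$- and $b_i$-blocks, not by the total descent count. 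The conclusion is salvageable — one can check that an adjacent transposition of an $A$-letter with a $B$-letter changes the run index of every letter by at most one, hence changes each quantity (run index of the last letter of the $a_i$-block) $- \, i$ by at most one — but this local analysis must actually be done; it is the analogue of Lemma~\ref{lem:w-monotone} and cannot be cited from it, since that lemma is about pipe dreams and Coxeter relations, not shuffles.
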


\begin{proof}
  Construct a word $Z \in A \shuffle B$  as in the proof of Lemma~\ref{lem:shuffle}, however, if, when doing this, taking $Z_h$ creates a descent with $Z_{h-1}$ and $\Des_{A}(Z_1\cdots Z_{h-1}) < a$ or $\Des_{B}(Z_1\cdots Z_{h-1}) < b$, then put $Z_h$ back and instead take all remaining letters equal to $A_i$, if the problem lies with $a$, and all remaining letters equal to $B_j$, if the problem lies with $b$, and put the smaller letters first. By construction, $Z \in \ShS(a,b)$, so $^{\#}\ShS(a,b)>0$.

  To see that all inequalities are strict, at the first time in the process that a problem occurs with $a$ or with $b$, we will construct an element $Z^{\prime}$ of $\ShS(0\times a,0\times b)$ that is not in $\ShS(a,b)$. If both $a$ and $b$ are problematic, say with $A_i < B_j$, then let $Z^{\prime}$ be the result of swapping the last occurence of $A_i$ with the $B_j$ that immediately follows it. If only one is problematic, say $a$, then the letter that the greedy algorithm first tried to take was $B_j$. Then let $Z^{\prime}$ be the result of moving $B_j$ to the left of the last $A_i$. The algorithm allows for only three possible cases: $B_j>A_i>Z_{h-1}$ or $Z_{h-1} > B_j>A_i$ or $A_i>Z_{h-1}>B_j$, and each is easy to see satisfies the claim.

  Finally, note that a problematic case never arises if and only if for every $i$ such that $a_i>0$ or $b_i>0$, the last occurence of the corresponding letter in \emph{any} shuffle $C \in A \shuffle B$ happens at or before the $i$th part of the descent composition of $C$. If no letter of $A$ occurs after the final descent in a word that maximizes the length of the descent composition, then we may slide the last letter coming from $A$ to end of the word, in the process losing one descent. Setting $\widehat{\zeta}(\alpha,\beta) = \zeta(\alpha,\beta) - \epsilon(\alpha,\beta)$, where $\epsilon$ is $0$ if $|\alpha|-\ell(\alpha) \geq |\beta|-\ell(\beta)$ and $-1$ otherwise, by  Lemma~\ref{lem:shuffle}, $\widehat{\zeta}(\Des(A),\Des(B))$ gives one plus the maximum number of descents that can occur before the last occurrence of a letter coming from $A$ in any shuffle of $A \shuffle B$. This means that in any shuffle of $A \shuffle B$, the last occurrence of the letter coming from $a_i$ occurs at or before position $\widehat{\zeta}(\flatten(a_1\cdots a_i), \flatten(b))$ in the descent composition. In order for the descent composition to dominate $a$, this position must be at or before $i$. Therefore $\widehat{\zeta}(\flatten(a_1\cdots a_i), \flatten(b)) - i$ precisely measures how many $0$'s must be prepended to $a$ to ensure $\sum_{j=1}^{i}\Des(C)_i \geq \sum_{j=1}^{i} a_i$ for all $C$. Finding this for each nonzero part of $a$ is equivalent to maximizing $\widehat{\zeta}(\flatten(a_1\cdots a_i), \flatten(b)) - i$ over all $i$.
  
  The expression $|a_1\cdots a_i|-\ell(a_1\cdots a_i)$ is monotonically increasing since the left term increases by at least one and the right by exactly one each time a nonzero $a_i$ is encountered. Therefore $\min(|a_1\cdots a_i| + \ell(b), \ell(a_1\cdots a_i) + |b|)$ occurs first at the left hand term, then at the right hand term, and never toggles back. Since $\ell(a_1\cdots a_i)$ increases by at most one as $i$ increases, the term $\ell(a_1\cdots a_i)+|b|-i$ is monotonically decreasing as $i$ increases. Therefore the maximum above is attained either at the index $i_a$ that maximizes $|a_1\cdots a_{i_a}|-i_a$, or at the first crossing point $j_a$ where $|a_1\cdots a_{j_a}| - \ell(a_1\cdots a_{j_a}) \geq |b| - \ell(b)$. The same analysis for $b$ results in \eqref{e:zeta-weak}.
\end{proof}

By Lemma~\ref{lem:product-ShS}, the product of fundamental slide polynomials stabilizes precisely at $\zeta(a,b)$. We can take this further by noting that fundamental expansion of the product of Schubert polynomials stabilizes precisely when both the individual expansions into fundamental slide polynomials and the product of those fundamental slide polynomials stabilize. To this end, extend the definition of $\zeta$ to pairs of permutations by
\begin{equation}
  \zeta(u,v) = \inv(u) + \inv(v) - \min(\width(v)+\min_{v_i > v_{i+1}}(i),\width(u)+\min_{u_i > u_{i+1}}(i)) + 1.
  \label{e:zeta-perm}
\end{equation}
For example, $\zeta(24153, 21534) = 4 + 3 - \min(2+2,3+1) + 1 = 4$.

\begin{theorem}
  For permutations $u,v$, let $\zeta = \zeta(u,v)$. Then for all $m \geq \zeta$, we have
  \begin{equation}
    \sch_{1^m \times u} \sch_{1^m \times v} = \sum_{a} c_{1^{\zeta}\times u,1^{\zeta}\times v}^{a} \Fund_{0^{m-\zeta}\times a},
  \end{equation}
  where $c_{u,v}^{a} = [\Fund_a \mid \sch_u \sch_v]$. In particular, taking the limit as $m\rightarrow\infty$, we have
  \begin{equation}
    \sta_{u} (X) \sta_{v} (X) = \sum_{a} c_{1^{\zeta}\times u,1^{\zeta}\times v}^{a} F_{\flatten(a)}(X).
  \end{equation}
  Futhermore, this result is tight in the sense that if $z$ is such that for some $m>z$, we have
  \begin{equation}
    \sch_{1^m \times u} \sch_{1^m \times v} = \sum_{a} c_{1^{z}\times u,1^{z}\times v}^{a} \Fund_{0^{m-z}\times a},
  \end{equation}
  then $z \geq \zeta$.
  \label{thm:product-ShS}
\end{theorem}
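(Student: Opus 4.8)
The plan is to isolate the two independent ways in which the fundamental slide expansion of $\sch_{1^{m}\times u}\,\sch_{1^{m}\times v}$ can fail to have reached its eventual (prepend-zeros) form, and to show that both are quenched exactly at $m=\zeta(u,v)$. First I would expand the product combinatorially: by Theorem~\ref{thm:schubert-slide} and Theorem~\ref{thm:slide} (equivalently by Theorem~\ref{thm:QLRR-sch}),
\begin{equation*}
  \sch_{1^{m}\times u}\,\sch_{1^{m}\times v}=\sum_{(P,Q)\in\QPD(1^{m}\times u)\times\QPD(1^{m}\times v)}\ \sum_{c}[c\mid\wt(P)\slide\wt(Q)]\,\Fund_{c}.
\end{equation*}
By Theorem~\ref{thm:w-stand} and Corollary~\ref{cor:w-monotone}, once $m\ge\eta(u)$ the set $\QPD(1^{m}\times u)$ is in size-preserving bijection with $\RD(u)$ and the weight attached to $\sigma\in\RD(u)$ is $0^{m-\eta(u)}\times p_{\sigma}$ for a fixed weak composition $p_{\sigma}$; likewise for $v$, with $\eta(v)$ and $q_{\tau}$. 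Set $m_{0}=\max(\eta(u),\eta(v))$ and let $\widehat p_{\sigma}=0^{m_{0}-\eta(u)}\times p_{\sigma}$ and $\widehat q_{\tau}=0^{m_{0}-\eta(v)}\times q_{\tau}$ be the weights actually realised in $\sch_{1^{m_{0}}\times u}$ and $\sch_{1^{m_{0}}\times v}$. Writing $m=m_{0}+k$ with $k\ge 0$, the pair attached to $(\sigma,\tau)$ is $(0^{k}\times\widehat p_{\sigma},\,0^{k}\times\widehat q_{\tau})$, and by Lemma~\ref{lem:product-ShS} and the ensuing remark that the product of fundamental slide polynomials stabilises precisely at $\zeta$, the product $\Fund_{0^{k}\times\widehat p_{\sigma}}\Fund_{0^{k}\times\widehat q_{\tau}}$ reaches its eventual prepend-zeros form precisely at $k=\zeta(\widehat p_{\sigma},\widehat q_{\tau})$, the weak-composition statistic of \eqref{e:zeta-weak}. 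Since every coefficient above is a nonnegative integer there is no cancellation, so $\sch_{1^{m}\times u}\,\sch_{1^{m}\times v}$ reaches its eventual prepend-zeros form at, and only at, $m=m_{0}+\max_{\sigma\in\RD(u),\ \tau\in\RD(v)}\zeta(\widehat p_{\sigma},\widehat q_{\tau})$.

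The core of the proof is therefore the identity $m_{0}+\max_{\sigma,\tau}\zeta(\widehat p_{\sigma},\widehat q_{\tau})=\zeta(u,v)$ with $\zeta(u,v)$ as in \eqref{e:zeta-perm}. To evaluate the left-hand side I would argue that the maximising pair $(\sigma,\tau)$ may be taken to be a greedy bubble-sort reduced word for whichever of $u,v$ minimises $\width(\cdot)+\min_{i}\{\cdot_{i}>\cdot_{i+1}\}$ --- precisely the reduced words appearing in the proof of Lemma~\ref{lem:lowest} --- shuffled greedily against its partner as in Lemma~\ref{lem:shuffle}; Lemma~\ref{lem:w-monotone} is what allows the reduction of the maximum over all pairs to this single extremal pair, since passing between reduced decompositions changes the relevant extremal rows by at most one. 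For that pair one reads off the four terms of \eqref{e:zeta-weak}: the partial sums $|a_{1}\cdots a_{i}|$ are controlled by $\inv(u)$ and $\inv(v)$; the lengths $\ell(\cdot)$ are lengths of descent compositions of reduced words, hence governed by $\max(L(\cdot))$ as in the proof of Lemma~\ref{lem:lowest}; the crossing-point terms $\ell(a_{1}\cdots a_{j})+|b|-j$ are dominated by the $i$-maximising terms; the optimal index corresponds to the last cross of the bubble-sort pipe dream, whose row produces the $\min_{i}\{\cdot_{i}>\cdot_{i+1}\}$ correction via Lemma~\ref{lem:lowest}; and the $\epsilon$-correction in \eqref{e:zeta-weak} combines with the $\delta$ contained in $m_{0}=\max(\eta(u),\eta(v))$ to yield the trailing $+1$. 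Assembling these substitutions collapses the left-hand side to $\inv(u)+\inv(v)-\min(\width(v)+\min_{i}\{v_{i}>v_{i+1}\},\ \width(u)+\min_{i}\{u_{i}>u_{i+1}\})+1$, which is \eqref{e:zeta-perm}; as a byproduct this shows $\zeta(u,v)\ge m_{0}=\max(\eta(u),\eta(v))$, which legitimises the use of Corollary~\ref{cor:w-monotone} at level $\zeta$ in the statement.

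Granting the identity, the displayed equation for $m\ge\zeta$ is immediate, and the limiting identity follows on letting $m\to\infty$, using $\Fund_{0^{m-\zeta}\times c}\to F_{\flatten(c)}(X)$ from Theorem~\ref{thm:stable-limit} together with $\lim_{m}\sch_{1^{m}\times w}=\sta_{w}(X)$. For tightness, suppose the prepend-zeros form held from some $z<\zeta$ onward. If $z<\max(\eta(u),\eta(v))$, say $\sch_{1^{m}\times u}$ has not yet stabilised by step $z$, then by the tight part of Corollary~\ref{cor:w-monotone} the polynomial $\sch_{1^{z+1}\times u}$ contains a fundamental slide term not obtained by prepending a zero to the expansion of $\sch_{1^{z}\times u}$; multiplying by $\sch_{1^{z+1}\times v}$, which is nonzero with nonnegative coefficients, exhibits a term in the product with the same defect, a contradiction. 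If instead $\max(\eta(u),\eta(v))\le z<\zeta$, then by the identity some pair $(\widehat p_{\sigma},\widehat q_{\tau})$ has $\zeta(\widehat p_{\sigma},\widehat q_{\tau})>z-m_{0}$, so by the tight part of Lemma~\ref{lem:product-ShS} the shuffle set $\ShS(0^{z+1-m_{0}}\times\widehat p_{\sigma},\,0^{z+1-m_{0}}\times\widehat q_{\tau})$ is strictly larger than the prepend-zero image of $\ShS(0^{z-m_{0}}\times\widehat p_{\sigma},\,0^{z-m_{0}}\times\widehat q_{\tau})$, and again positivity rules out cancellation, a contradiction. Hence $z\ge\zeta$.

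The step I expect to be the main obstacle is the combinatorial identity $m_{0}+\max_{\sigma,\tau}\zeta(\widehat p_{\sigma},\widehat q_{\tau})=\zeta(u,v)$: reducing the maximum over all pairs of reduced decompositions to a single greedy pair, and then matching the four-term formula \eqref{e:zeta-weak} term by term against the inversion numbers, the lengths of descent compositions, the $\width(\cdot)+\min_{i}\{\cdot_{i}>\cdot_{i+1}\}$ correction, and the trailing $+1$. Once that identity is in hand, the rest is bookkeeping built on Theorems~\ref{thm:schubert-slide}, \ref{thm:slide}, \ref{thm:w-monotone} and Lemma~\ref{lem:product-ShS}.
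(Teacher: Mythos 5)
Your overall architecture is the same as the paper's: use Corollary~\ref{cor:w-monotone} for the stability of each factor, Lemma~\ref{lem:product-ShS} for the stability of each pairwise product $\Fund_{0^k\times a}\Fund_{0^k\times b}$, conclude that the product stabilizes at $\eta+\max_{(a,b)}\zeta(a,b)$ with $\eta=\max(\eta(u),\eta(v))$ and the maximum over weights of quasi-Yamanouchi pipe dreams, and then relate this threshold to $\zeta(u,v)$. The limit statement and the general shape of the tightness argument are also fine (the paper's own tightness discussion is no less terse than yours, and positivity/term-counting makes it rigorous).

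The genuine gap is exactly at the step you flag as the main obstacle: you never establish the upper bound $\max_{\sigma,\tau}\zeta(\widehat p_{\sigma},\widehat q_{\tau})\leq\zeta(u,v)-\eta$, and the mechanism you propose for it does not work. Lemma~\ref{lem:w-monotone} only says that the row of the lowest cross of $\sit(\sigma)$ changes by at most one under a single braid or commutation move; it says nothing about the quantity $\zeta(\wt(P),\wt(Q))$ of \eqref{e:zeta-weak}, and even granting an analogous ``changes by at most one'' property, such connectivity yields an interval statement (as in Theorem~\ref{thm:w-monotone}), not the identification of the greedy bubble-sort pair as the maximizer. The paper avoids locating the maximizer altogether: it bounds $\zeta(a,b)$ uniformly over all pairs directly from \eqref{e:zeta-weak}, using that $|a|=\inv(u)$, $|b|=\inv(v)$, that $j-\ell(a_1\cdots a_j)$ counts the empty rows of $P$ up to row $j$, and that the first $\eta-\eta(u)$ rows of any $P\in\QPD(1^{\eta}\times u)$ are forced to be empty; this gives $\zeta(a,b)\leq\max\bigl(\inv(u)+\eta(v)-\eta,\ \inv(v)+\eta(u)-\eta\bigr)$, which after expanding $\eta(u),\eta(v)$ is $\zeta(u,v)-\eta$. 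Only for attainment does it invoke the extremal reduced decompositions of Lemma~\ref{lem:lowest}, taking $P=\sit(\sigma_u)$ and $Q=\sit(\sigma_v)$ spanning $\eta(u)$ and $\eta(v)$ contiguous rows. To complete your proof you would need to replace the appeal to Lemma~\ref{lem:w-monotone} by such a uniform estimate (or some other argument valid for every pair of reduced decompositions), since the term-by-term matching you sketch is only carried out for the single greedy pair.
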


\begin{proof}
  By Corollary~\ref{cor:w-monotone}, the fundamental slide expansion of $\sch_{1^m\times u}$ is stable if and only if $m \geq \eta(u)$, and similarly for $v$. By Lemma~\ref{lem:product-ShS}, the fundamental slide expansion of the product $\Fund_{0^m\times a} \Fund_{0^m\times b}$ is stable if and only if $m \geq \zeta(a,b)$. Therefore the fundamental slide expansion of the product $\sch_{1^m\times u} \sch_{1^m\times v}$ is stable if and only if 
  \begin{equation}
    m \geq \eta + \max_{\substack{[\Fund_{a}\mid \sch_{1^{\eta}\times u}]>0 \\ [\Fund_{b}\mid \sch_{1^{\eta}\times v}]>0}} \left( \zeta(a,b) \right),
    \label{e:max}
  \end{equation}
  where $\eta=\max(\eta(u),\eta(v))$.Consider pairs $(a,b)$ that appear as $(a,b)=(\wt(P),\wt(Q))$ for some pair $(P,Q) \in \QPD(1^{\eta}\times u) \times \QPD(1^{\eta}\times v)$. We take each term of \eqref{e:zeta-weak} in turn. First, note that $|a| = \inv(u)$ and $|b| = \inv(v)$. Next, $j - \ell(a_1 \cdots a_j)$ is the number of empty rows up to and including row $j$ in $P$. Since the first $\eta - \eta(u)$ rows of $P$ are necessarily empty, we have $\ell(a_1 \cdots a_j) - j + |b| \leq \eta(u) - \eta + \inv(v)$. If $|a_1\cdots a_{i}| - \ell(a_1\cdots a_{i}) < |b| - \ell(b)$, then $|a_1\cdots a_{i}| + \ell(b) < |b| + \ell(a_1\cdots a_{i})$. Therefore $|a_1 \cdots a_i| + \ell(b) - i -\epsilon < |b| + \ell(a_1\cdots a_{i}) -i -\epsilon \leq \inv(v) + \eta(u) - \eta$. Combining these reductions with the symmetric ones with $a$ and $b$ interchanged, we have
  \begin{equation}
    \max_{\substack{[\Fund_{a}\mid \sch_{1^{\eta}\times u}]>0 \\ [\Fund_{b}\mid \sch_{1^{\eta}\times v}]>0}} \left( \zeta(a,b) \right) \leq \max( \inv(u) + \eta(v) - \eta, \inv(v) + \eta(u) -\eta ).
  \end{equation}
  Substituting this into \eqref{e:max} and expanding $\eta(u), \eta(v)$ gives the bound.
  
  To see that the bound is tight, let $P = \sit(\sigma_u)$ (resp. $Q = \sit(\sigma_v)$), where $\sigma_u$ (resp. $\sigma_v$) is the reduced decomposition for $u$ (resp. $v$) that spans $\eta(u)$ (resp. $\eta(v)$) contiguous rows, which exists by Lemma~\ref{lem:lowest}. Then $P$ (resp. $Q$) has exactly $\eta-\eta(u)$ (resp. $\eta-\eta(v)$) empty rows to begin.
\end{proof}

In \cite[Theorem 1.3]{Li14}, Li proved that the product $\sch_{1^m \times u} \sch_{1^m \times v}$ is stable for $m \ge \inv(u)+\inv(v)$. We can use Theorem~\ref{thm:product-ShS} to tighten the bound to $\zeta(u,v)$.

\begin{corollary}
  For permutations $u,v$, there exists $\zeta \leq \zeta(u,v)$ such that for all $m \geq \zeta$, we have
  \begin{equation}
    \sch_{1^m \times u} \sch_{1^m \times v} = \sum_{w} c_{1^{\zeta}\times u,1^{\zeta}\times v}^{w} \sch_{1^{m-\zeta}\times w},
  \end{equation}
  where $c_{u,v}^{w} = [\sch_w\mid\sch_u\sch_v]$. In particular, taking the limit as $m\rightarrow\infty$, we have
  \begin{equation}
    \sta_{u} (X) \sta_{v} (X) = \sum_{w} c_{1^{\zeta}\times u,1^{\zeta}\times v}^{w} \sta_{w}(X),
  \end{equation}
  giving the product of Stanley symmetric functions in terms of Schubert structure constants.
  \label{cor:product-ShS}
\end{corollary}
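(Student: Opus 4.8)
The plan is to read off the Schubert-basis identity from the fundamental-slide-basis identity already proved in Theorem~\ref{thm:product-ShS}, by inverting the unitriangular change of basis of Proposition~\ref{prop:lead}, and then to pass to the inverse limit using Macdonald's Theorem~\ref{thm:schub-limit}. Throughout write $g_m = \sch_{1^m\times u}\sch_{1^m\times v}$, set $\zeta_0 = \zeta(u,v)$, and let $T_k$ denote the linear operator on $\Poly$ defined by $\Fund_a \mapsto \Fund_{0^k\times a}$ (well-defined since the fundamental slide polynomials are a basis). Theorem~\ref{thm:product-ShS} says precisely that $g_m = T_{m-\zeta_0}(g_{\zeta_0})$ for every $m\ge\zeta_0$; that is, the fundamental slide expansion of $g_m$ is obtained from that of $g_{\zeta_0}$ by prepending $m-\zeta_0$ zeros to every index.

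Next, expand $g_{\zeta_0}$ in the Schubert basis. By Proposition~\ref{prop:lead} the transition matrix from $\{\sch_w\}$ to $\{\Fund_a\}$ is unitriangular for dominance order via $\sch_w = \Fund_{L(w)} + \sum_{b>L(w)} c_{w,b}\Fund_b$, so it is invertible, and the Schubert expansion $g_{\zeta_0} = \sum_w d^w\sch_w$ is computed by the greedy procedure: repeatedly subtract $[\,\Fund_{L(w)} \mid g_{\zeta_0}\,]\,\sch_w$ for the dominance-minimal fundamental slide index appearing, and recurse. Since dominance order on weak compositions is preserved by prepending zeros and $L(1^k\times w)=0^k\times L(w)$, running this same procedure on $g_m=T_{m-\zeta_0}(g_{\zeta_0})$ should produce $\sum_w d^w\sch_{1^{m-\zeta_0}\times w}$ --- provided that at each stage the Schubert polynomial being subtracted, now indexed by $1^{m-\zeta_0}\times w$, has fundamental slide expansion equal to $T_{m-\zeta_0}$ of that of $\sch_w$. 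By Corollary~\ref{cor:w-monotone} and Theorem~\ref{thm:w-monotone} this holds once $m-\zeta_0\ge\eta(w)$, and in particular automatically when $\eta(w)\le 0$. Thus the proof reduces to the following claim: every $w$ with $d^w\ne 0$ satisfies $\eta(w)\le 0$.

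I expect this claim to be the main obstacle; proving it is the weak-composition analogue of the book-keeping already carried out for $\eta$ and $\zeta$ in Lemma~\ref{lem:lowest} and the proof of Theorem~\ref{thm:product-ShS}. Three facts feed into it. First, $g_{\zeta_0}$ is homogeneous of degree $\inv(u)+\inv(v)$, so $\inv(w)=\inv(u)+\inv(v)$ for every $w$ with $d^w\ne 0$, and by \eqref{e:eta} the inequality $\eta(w)\le 0$ becomes $\max(L(w))+\min_{w_i>w_{i+1}}(i)\ge\inv(u)+\inv(v)+\delta(w)$. Second, by unitriangularity and the fact that the dominance-minimal fundamental slide index of $g_{\zeta_0}$ is $0^{\zeta_0}\times(L(u)+L(v))$ (the leading monomial of a product of Schubert polynomials is the product of the leading monomials), we have $L(w)\ge 0^{\zeta_0}\times(L(u)+L(v))$ for every such $w$; since $L(w)$ is weakly increasing up to the first descent of $w$, this pushes both $\min_{w_i>w_{i+1}}(i)$ and $\max(L(w))$ up. Third, unwinding \eqref{e:zeta-perm} one checks that these forced lower bounds exactly meet the threshold above: the extreme case is $w=1^{\zeta_0}\times L^{-1}(L(u)+L(v))$, for which $\eta(w)=\eta\bigl(L^{-1}(L(u)+L(v))\bigr)-\zeta_0$, and the estimates $\max(L(u)+L(v))\ge\max(\width(u),\width(v))$ together with "first descent of $L(u)+L(v)$ is at least $\min\bigl(\min_{u_i>u_{i+1}}(i),\min_{v_i>v_{i+1}}(i)\bigr)$" give $\eta\bigl(L^{-1}(L(u)+L(v))\bigr)\le\zeta(u,v)$; the same estimate, applied with the true partial sums of $L(w)$, handles every $w$.

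Granting the claim, $g_m=\sum_w d^w\sch_{1^{m-\zeta_0}\times w}$ for all $m\ge\zeta_0$ with $d^w=c^w_{1^{\zeta_0}\times u,\,1^{\zeta_0}\times v}$. Taking $\zeta$ to be the least value at which such a stable formula holds (which is finite, e.g.\ by \cite[Theorem~1.3]{Li14}) yields $\zeta\le\zeta_0=\zeta(u,v)$; at $m=\zeta$ the formula is merely the definition of the Schubert structure constants, so its coefficients are $c^w_{1^\zeta\times u,\,1^\zeta\times v}$, as stated. Finally, let $m\to\infty$: by Theorem~\ref{thm:schub-limit} we have $\sch_{1^m\times u}\to\sta_u(X)$, $\sch_{1^m\times v}\to\sta_v(X)$, and $\sch_{1^{m-\zeta}\times w}\to\sta_w(X)$, and since every polynomial in sight is homogeneous of the fixed degree $\inv(u)+\inv(v)$ the product commutes with the limit; hence $\sta_u(X)\,\sta_v(X)=\sum_w c^w_{1^\zeta\times u,\,1^\zeta\times v}\,\sta_w(X)$, expressing the product of Stanley symmetric functions through Schubert structure constants.
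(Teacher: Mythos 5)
Your reduction is sound as far as it goes: granting your key claim (every $w$ in the Schubert support of $g_{\zeta_0}=\sch_{1^{\zeta_0}\times u}\sch_{1^{\zeta_0}\times v}$ has $\eta(w)\le 0$), linearity of $T_{m-\zeta_0}$ together with Corollary~\ref{cor:w-monotone} immediately gives $g_m=\sum_w d^w\sch_{1^{m-\zeta_0}\times w}$ for $m\ge\zeta_0$ (the greedy-inversion discussion is not even needed), and your limiting step via Theorem~\ref{thm:schub-limit} is fine. Note the paper states this corollary without proof, so there is nothing to compare against line by line; the intended route is evidently Theorem~\ref{thm:product-ShS} combined with Li's stabilization, which is also roughly your plan.

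The genuine gap is exactly where you suspected: your argument for the claim does not work, because the only information you extract about the support --- $\inv(w)=\inv(u)+\inv(v)$ and $L(w)\ge 0^{\zeta_0}\times(L(u)+L(v))$ in dominance --- does not imply $\eta(w)\le 0$. Dominance is a \emph{lower} bound on partial sums, so it pushes the mass of $L(w)$ to the left; this is compatible with an early first descent and a small $\max(L(w))$, i.e.\ with large $\eta$, and dominance-larger codes can have strictly larger $\eta$ (already $(2,0)$ versus $(0,2)$). Concretely, for $u=v=321$ one has $\zeta_0=4$ and $L(u)+L(v)=(4,2,0)$, and the code $c=(2,1,1,1,1,0)$ has size $6$, dominates $(0,0,0,0,4,2)$, yet the corresponding permutation has $\eta=6-2+1-1=4>0$; so ``the same estimate, applied with the true partial sums of $L(w)$, handles every $w$'' is false as a general implication, and your verification of only the extreme case $L(w)=0^{\zeta_0}\times(L(u)+L(v))$ does not extend. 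To prove the claim you need genuine input about which permutations actually occur. One way: by \cite[Theorem 1.3]{Li14} pick $N\ge\zeta_0$ with Schubert stability from $N$ on; Theorem~\ref{thm:product-ShS} makes the total number of fundamental slide terms of $g_m$ (with multiplicity) constant for $m\ge\zeta_0$, while by Theorem~\ref{thm:w-monotone} each $^{\#}\QPD(1^{m-N}\times w)$ is weakly increasing in $m$ and strictly so until $m-N\ge\eta(w)$; since the Schubert structure constants and all slide coefficients are nonnegative, constancy forces $\eta(w)\le 0$ for every $w$ in the stable support. Then triangularity (Proposition~\ref{prop:lead}) and the fact that every slide index of $g_N$ has $N-\zeta_0$ leading zeros force each such $w$ to equal $1^{N-\zeta_0}\times w^-$, and Lemma~\ref{lem:lowest} (no quasi-Yamanouchi pipe dream avoids its lowest reachable row unless $\eta\le 0$) gives $\eta(w^-)\le 0$, after which de-shifting yields $g_{\zeta_0}=\sum c^{w}\sch_{w^-}$ and your argument closes. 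Without some argument of this kind, the central claim remains unproved.
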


%%%%%%%%%%%%%%%%%%%%%%%%%%%%%%%%%%%%%%%%%%%%%%%%%%%%%%%%%%%%
%
%  Bibliography
%
%%%%%%%%%%%%%%%%%%%%%%%%%%%%%%%%%%%%%%%%%%%%%%%%%%%%%%%%%%%%

\bibliographystyle{amsalpha} 
\bibliography{quasi_schubert_rev1.bib}

\end{document}